\newcommand{\Zcy}{{Z_{\textrm{CY}}}}
\newcommand{\ZC}{{\cZ(\cC)}}
\newcommand{\ZZC}{{\cZ^\textrm{el}(\cC)}}
\newcommand{\cR}{\mathcal{R}}
\newcommand{\cX}{\mathcal{X}}
\newcommand{\cY}{\mathcal{Y}}
\newcommand{\cD}{\mathcal{D}}
\newcommand{\cG}{\mathcal{G}}
\newcommand{\wdtld}[1]{\widetilde{#1}}
\newcommand{\op}{\textrm{op}}
\newcommand{\cop}{\textrm{cop}}
\newcommand{\induct}[1]{{X_j {#1} X_j^*}}
\newcommand{\iinduct}[1]{{X_i X_j {#1} X_j^* X_i^*}}
\newcommand{\rtdual}[1]{\tensor[^*]{#1}{}}
\newcommand{\dcop}{{*,\cop}}
\newcommand{\SLZ}{{\textrm{SL}_2(\mathbb{Z})}}
\newcommand{\torus}{{\mathbb{T}^2}}
\newcommand{\punctorus}{{\mathbb{T}_1^2}}
\newcommand{\lmb}{\lambda}
\newcommand{\tnsrbar}{\overline{\tnsr}}
\newcommand{\tnsrproj}[2]{\tensor[_{#1}]\tnsrbar{_{#2}}}
\newcommand{\picmid}[1]{\begin{matrix}{#1}\end{matrix}}
\newcommand{\cII}{{\cI^\textrm{el}}}
\newcommand{\cFF}{{\cF^\textrm{el}}}
\newcommand{\cDD}{{\cD^\textrm{el}}}
\newcommand{\oneel}{{\one^\textrm{el}}}
\newcommand{\Cat}{{\textrm{Cat}}}
\newcommand{\modl}{{\textrm{mod}}}
\newcommand{\Vect}{\mathcal{V}ec}  
\newtheorem{defn-prop}{Definition-Proposition}[section]
\begin{document}

\title{The Elliptic Drinfeld Center of a Premodular Category}
\author{Ying Hong Tham}
\date{}
\maketitle

\abstract{
  Given a tensor category $\cC$, one constructs its Drinfeld center
  $\ZC$ which is a braided tensor category,
  having as objects pairs $(X,\lmb)$,
  where $X \in \Obj(\cC)$ and $\lmb$ is a half-braiding.
  For a \emph{premodular category} $\cC$,
  we construct a new category $\ZZC$
  which we call the \emph{Elliptic Drinfeld Center},
  which has objects $(X, \lmb^1, \lmb^2)$, where the $\lmb^i$'s
  are half-braidings that satisfy some compatibility conditions.
  We discuss an $\SLZ$-action
  on $\ZZC$ that is related to the anomaly appearing in
  Reshetikhin-Turaev theory.
  This construction is motivated from the study of the
  extended Crane-Yetter TQFT,
  in particular the category associated to the once punctured torus.
}

\section{Introduction and Preliminaries}

In \ocite{CY}, Crane and Yetter define a
4d TQFT using a state-sum involving 15j symbols,
based on a sketch by Ooguri \ocite{Oo}.
The state-sum begins with a coloring of the
2- and 3-simplices of a triangulation of the four manifold
by integers from $0,1,\ldots,r$.
These 15j symbols then arise as the evaluation
of a ribbon graph living on the boundary of a 4-simplex.
The labels $0,1,\ldots,r$ correspond to simple objects of
the Verlinde modular category,
the semi-simple subquotient of the category of finite
dimensional representations of the quantum group
$U_q \mathfrak{sl}_2$ at $q = e^{\pi i/r+2}$
as defined in \ocite{AP}.
Later Crane, Kauffman, and Yetter \ocite{CKY}
extend this definition to colorings with objects from
a premodular category (i.e. artinian semisimple tortile/ribbon).\\

The invariant for closed 4-manifolds that one obtains 
from the Crane-Yetter (CY) state-sum
essentially boils down to the signature and Euler characteristic
of the manifold, though it is still interesting because
it expresses the signature of a 4-manifold in terms of local
combinatorial data \ocite{CKY_evaluate}.
It is believed that the CY TQFT, with a modular category
as input, is a boundary theory,
in that $\Zcy(M^4)$ for a 4-manifold with boundary
is determined by its boundary $\del M^4$
and classical invariants of $M^4$ like the signature
and Euler characteristic.\\

In \ocite{CKY}, the authors speculate that their theory,
when extended to include insertions at surfaces and points,
could be related to Donaldson-Floer (DF) theory \ocite{D}.
Attempts have been made (e.g. \ocite{Y},\ocite{Ro})
to modify the state-sum in \ocite{CKY}
in the presence of insertions on surfaces.
In principle, these insertions are labellings of a
codimension-2 submanifold by objects in a
category associated to the abstract homeomorphism class
of that submanifold. These categories should be related
to each other via some gluing axioms.\\

The construction presented in this paper arose out of
studying such categories.
Namely, starting with a fixed premodular category $\cC$
(which would be used to produce the CY TQFT),
we have an abstract schema of producing a category
$\Zcy(\Sigma)$ for each surface $\Sigma$
(possibly with punctures and boundaries).
In brief,
the basic objects in $\Zcy(\Sigma)$ are configurations of finitely many
points, each labelled with an object in $\cC$;
morphisms are skein modules with appropriate boundary conditions.
One then completes the category by considering the Karoubian closure.\\

In \ocite{BBJ1}, \ocite{BBJ2}, Ben-Zvi, Brochier, and Jordan 
use factorization homology to construct such categories,
integrating certain algebras over surfaces.
We expect that our constructions agree.\\

Although we have abstractly defined these categories $\Zcy(\Sigma)$
from skeins,
the goal of our studies is to relate them to the input
premodular category $\cC$.
For example, we have that
\begin{align*}
  \Zcy(\mathbf{D}^2)
    &= \cC \textrm{ (by default)}\\
  \Zcy(S^2)
    &= \cZ_{\textrm{M\"{u}}}(\cC), \textrm{ the M\"{u}ger center}\\
  \Zcy(S^1 \times [0,1])
    &= \cZ(\cC), \textrm{ the Drinfeld center}\\
  \Zcy(\punctorus)
    &= \ZZC\\
  \Zcy(\mathbb{T}^2)
    &= \mathcal{V}ec \textrm{ for } \cC \textrm{ modular}
\end{align*}
where $\ZZC$ is the category we construct in this paper,
which we call the \emph{Elliptic Drinfeld center}.
We will establish these results in future work,
as our main goal in this paper is to define and study
$\ZZC$.\\

Briefly, $\ZZC$ has as objects $(X,\lambda^1, \lambda^2)$,
where $\lambda^1, \lambda^2$ are half-braidings on $X$.
$\lambda^1$ and $\lambda^2$ are required to satisfy
certain commutativity relations involving the braiding on $\cC$.
Thus we stress that while the Drinfeld center can be
defined for any monoidal category, our elliptic Drinfeld center
requires that $\cC$ be \emph{braided}.
The other conditions (fusion, ribbon) are not essential for
the definition but are needed to define the (extended) TQFT.
They also lead $\ZZC$ to have nice properties.\\

Choose a pair of oriented simple closed curves
on $\punctorus$ so that $\punctorus$ deformation retracts onto
their union
(see \rmkref{rmk:pop} for a picture).
Then there is a functor
\[
  \ZZC \xrightarrow[]{\sim} \Zcy(\punctorus)
\]
that sends $(X,\lmb,^1,\lmb^2)$ to the image of
a projection on the configuration
with one marked point labelled by $X$.
The projection is built out of $\lmb^1$ and $\lmb^2$,
somehow assigning them to the two chosen curves.
In hand-wavy terms,
$\ZZC$ is a ``coordinate representation" of
$\Zcy(\punctorus)$, in the sense that we have picked
a marked point and a pair of such curves
in order to express our objects with ``coefficients" in $\cC$,
and this ``coordinate representation" changes when
we change these choices.
Further discussions of this can be seen in e.g.
\rmkref{rmk:sl2z_induct_forget}, \secref{sec:sl2z},
\rmkref{rmk:elliptic_slz}).\\

Let us give a brief outline of the paper.
In \secref{sec:drinfeld_center},
we first recall some properties of the usual
Drinfeld center $\ZC$.
In \secref{sec:elliptic},
we then discuss various properties of $\ZZC$
in parallel with those of $\ZC$ laid out in
\secref{sec:drinfeld_center}.
We show that $\ZZC$
is monoidal (see Definition-Proposition \ref{def:ZZC_tnsr}).
Being the category associated to $\punctorus$,
it naturally carries an action of $\SLZ$
(see \thmref{thm:slz_action}).
However, some of the arguments are of a topological nature,
and is more naturally understood in the context of
the extended Crane-Yetter TQFT,
hence to limit the scope of the paper,
we postpone full proofs to future work.\\

As mentioned above, when $\cC$ is modular,
it is expected that $\Zcy$ is a boundary theory.
Since $\Zcy(\mathbf{D}^2) = \cC$,
one expects $\Zcy(\punctorus) \cong \cC$ as well.
To this end, we prove in \secref{sec:modular_case} that:\\

\noindent 
\textbf{\thmref{thm:modular}.}
\emph{
  If $\cC$ is modular, then the composition
  \[
    i = \cI_1 \circ \iota : \cC \to \ZZC
  \]
  is an equivalence of abelian categories,
  where $\iota: \cC \to \ZC$ is the functor $X \mapsto (X,c_{-,X})$,
  and $\cI_1$ is the intermediate induction functor
  defined in \prpref{prp:intermediate_adjoint_1}.
}\\

In \secref{sec:modularRT}, we discuss
the connection of the $\SLZ$-action on $\ZZC$
with the anomaly in Chern-Simons/Reshetikhin-Turaev theory
via \thmref{thm:modular}.
However, in part due to the reliance of this
action on the $\SLZ$-action on $\ZZC$,
we've decided to omit some details and proofs
and once again relegate them to future work.\\

In \secref{sec:C_Hmod},
we consider $\cC = H-\modl$, where $H$ is a Hopf algebra.
In this case,
the Drinfeld center is equivalent to the category of modules over
Drinfeld's quantum double, $\cD(H)$.
In the same spirit, when $H$ is \emph{braided}, we construct an algebra
$\cDD(H)$, which we call the \emph{Elliptic Drinfeld double},
such that\\

\noindent
\textbf{\thmref{thm:ZZRep_alg}.}
\emph{
  For $\cC = H-\modl$, $\ZZC \cong \cDD(H)-\modl$
  as abelian categories.
}\\

Brochier and Jordan \ocite{BJ} defined an algebra
which they also call the elliptic double,
and also arising from studying the category associated to
the once-punctured torus.
These algebras are not isomorphic, but we expect them
to be Morita equivalent.
In \ocite{BJ}, they also obtain an $\wdtld{\SLZ}$-action
on their elliptic double; we touch on this briefly in
\rmkref{rmk:elliptic_slz}.\\

When $\cC$ is symmetric, there is a tensor product on $\ZZC$
different from the one defined in \secref{sec:tensor_product}.
When $H$ is cocommutative, $\cDD(H)$ has a ribbon Hopf structure,
thus $\cDD(H)-\modl$ is a tensor category.
Then with respect to these monoidal structures,
the equivalence in \thmref{thm:ZZRep_alg} is one of tensor categories.\\

In \secref{sec:conclusion},
we discuss a generalization of our construction of $\ZZC$,
corresponding to considering surfaces other than $\punctorus$.
Finally, the last section is an Appendix,
with some useful lemma that are frequently used in
computing with string diagrams,
and a discussion of group actions on categories
given by generators and relations.\\

To conclude this section, let us compare the structures on the
elliptic Drinfeld center with the usual Drinfeld center.
Beginning with a monoidal category $\cC$,
the Drinfeld center $\ZC$ is a braided monoidal category.
On the other hand, beginning with a braided monoidal category $\cC$,
the elliptic Drinfeld center $\ZZC$ is a monoidal category
but not braided (this is discussed in \secref{sec:tensor_product},
but full proofs will be given in future work).
In addition, $\ZZC$ carries an action of $\SLZ$.\\

This difference is a feature of the topology of surfaces:
as mentioned above, the Drinfeld center is associated to the
annulus, while the elliptic Drinfeld center is associated to
the once-punctured torus.
In both cases, the monoidal structure arises from
a generalized pair of pants\footnotemark
(see \rmkref{rmk:pop}).
For the annulus, this generalized pair of pants
is just a thicked pair of pants,
so has a homeomorphism swapping the two inputs,
making the monoidal structure a braided one,
while for the once-punctured torus,
this generalized pair of pants does not admit
such a swapping operation.\\

Note that in both cases, the (braided) monoidal structure
differs from that of $\cC$:
thinking of $\cC$ as an $E_2$ algebra,
the monoidal structure manifests as
inclusion of little disks in the little disks operad,
so in a very loose sense governs ``local behaviour".
However, the monoidal structures on $\ZC$ and $\ZZC$
are an artefact of global topology of the relevant surfaces.
For example, the braided structure on $\ZC$
is quite different from that of $\cC$
- it is constructed using only the monoidal structure of $\cC$.
The takeaway is that we should not think of ``gaining"
or ``losing" structures from $\cC$
when we construct $\ZC$ and $\ZZC$,
but rather observe that they merely reflect the topology of surfaces.
\\

\footnotetext{This is not the generalized pair of pants
in the sense of Floer theory in symplectic geometry.}

\emph{Acknowledgements.} 
The author thanks Alexander Kirillov and Jin-Cheng Guu
for many helpful conversations.

\subsection{Notation and Conventions}

Throughout, let us fix an algebraically closed field $\kk$
of characteristic 0.\\

Let $\cC$ be a $\kk$-linear premodular category, that is,
a ribbon fusion category. Some assumptions and notations:
\begin{itemize}
  \item For simplicity of exposition and minimality of parentheses,
    we suppress applications of the associativity constraint
    unless it leads to confusion.
  \item We implicitly identify $V^{**}$ with $V$
    via the pivotal structure $\delta_V : V \to V^{**}$.
  \item We denote the braiding by $c_{A,B}: A\tnsr B \to B\tnsr A$.
  \item $\cC^\bop$ is the same underlying category as $\cC$
    but with \textbf{op}posite \textbf{b}raiding.
  \item The set of isomorphism classes of simples is denoted by $J$,
    and we fix a representative $X_j$ for each $j\in J$.
    $0\in J$ will index the unit object, $X_0 = \one$.
  \item We fix isomorphisms $\vphi_i : X_j^* \to X_{j^*}$ compatible
    with the pivotal structure, i.e.
    $\delta_{X_j} = \vphi_i^* \circ \vphi_{i^*}^\inv : X_i \to X_i^{**}$.
  \item Evaluation, coevaluation maps are
  \begin{align*}
    \ev_X : X^* \tnsr X \to \one\\
    \coev_X : \one \to X \tnsr X^*\\
    \wdtld{\ev}_X = \ev_{X^*} : X \tnsr X^* \to \one\\
    \wdtld{\coev}_X = \circ \coev_{X^*} : \one \to X^* \tnsr X
  \end{align*}
    (where in the third and fourth line we suppressed the pivotal map
    $\delta_X: X \to X^{**}$)
  \item The categorical dimension of $X_j$ is denoted
    $d_j = \dim_\cC X_j \in \End(\one)$.
    For each $j$, we fix a square root $\sqrt{d_j}$.
    The dimension of $\cC$ is denoted
    $\cD = \sum_j d_j^2$,
    and we will assume that $\cD \neq 0$.
    We also fix a square root $\sqrt{\cD}$.
  \item Quite often we will omit the symbol $\tnsr$,
    so that concatenation of objects denote tensor products,
    e.g. $c_{A,B} : AB \to BA$.
  \item We use an ``Einstein convention":
    when latin lowercase alphabet appear in dual pairs,
    they will be summed over the set of simple objects $J$.
    For example,
    $\induct{ }$ is short for $\dirsum \induct{ }$.
\end{itemize}

We will describe morphisms using graphical calculus
(see for example \ocite{BakK}, \ocite{K}).
Here are some conventions:
\begin{itemize}
  \item All diagrams represent morphisms in $\cC$;
    morphisms in the other categories that show up, $\ZC$ and $\ZZC$,
    are subspaces of morphisms in $\cC$.
  \item Our convention will be that morphisms go from the bottom
    object to the top.
  \item If a string is shown without orientation, it is going up
    by default.
  \item In string diagrams, some strings with be labelled with
    a lowercase latin alphabet.
    This means we are meant to sum the diagram over $J$.
    There is a similar notion for greek letters (see Appendix).
  \item Dashed lines will stand for the sum over all colorings of an edge/loop
    by simple objects $j$, each taken with coefficient $d_j$:
    \[
      \includegraphics[height=60pt]{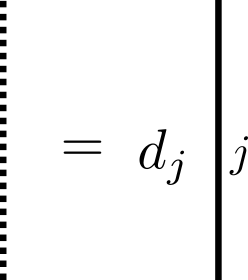}
    \]
    (note this is to be summed over $j\in J$, as mentioned above)
  \item A pair of morphisms labelled with the same greek letter
    (sometimes with an overline)
    will denote a sum over a pair of dual bases with respect to
    a certain pairing -
    see appendix for details.
\end{itemize}

We refer the reader to the appendix for examples, useful identities,
and further clarification.\\

\begin{remark}
All of our constructions are purely algebraic,
but we try to explain their topological underpinnings.
Thus, topological discussion will be a little sloppy;
in particular, we will confuse boundaries and punctures
on a surface unless they lead to confusion.
\end{remark}

\section{The Drinfeld Center}
\label{sec:drinfeld_center}

Let us recall the construction and properties of the Drinfeld center.
There is nothing new here, so the expert may skip to \secref{sec:elliptic};
we include this so as to make the constructions and proofs
for the elliptic Drinfeld center more transparent
and to set some notation.\\

The following construction is due to Drinfeld (unpublished),
and appears in \ocite{Maj},\ocite{JS}:

\begin{definition}
  The \emph{Drinfeld center} $\ZC$ of a monoidal category $\cC$
  is a category consisting of the following:\\

  An object of $\ZC$ is a pair $(X,\lmb)$,
  where $X$ is an object of $\cC$ and
  $\lmb$ is a half-braiding on $X$, i.e. a natural transformation
  $\lmb: - \tnsr X \to X \tnsr -$ that respects tensor products,
  i.e. satisfies the equation on the left below:
  \[
    \picmid{
      \includegraphics[height=80pt]{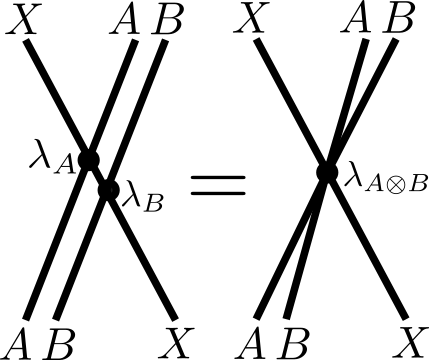}
    }
    \hspace{20pt}
    ;
    \hspace{20pt}
    \picmid{
      \includegraphics[height=80pt]{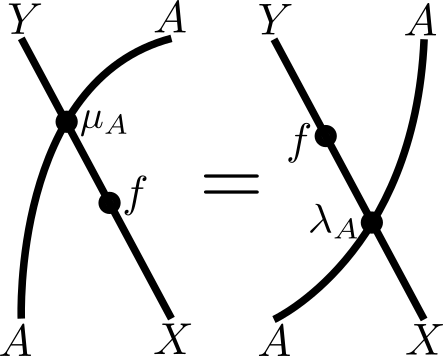}
    }
  \]
  The morphisms $\Hom_{\ZC}((X,\lmb), (Y, \mu))$
  are the subspace of those morphisms in $\Hom_\cC(X, Y)$
  that intertwine the half-braidings $\lmb,\mu$
  (equation on the right above).
\hfill $\triangle$
\end{definition}

A more concise way to simultaneously state the naturality of $\lmb$
and the above condition on $\lmb$ is the following,
which will be used frequently to manipulate diagrams and prove equations:
\[
  \includegraphics[height=80pt]{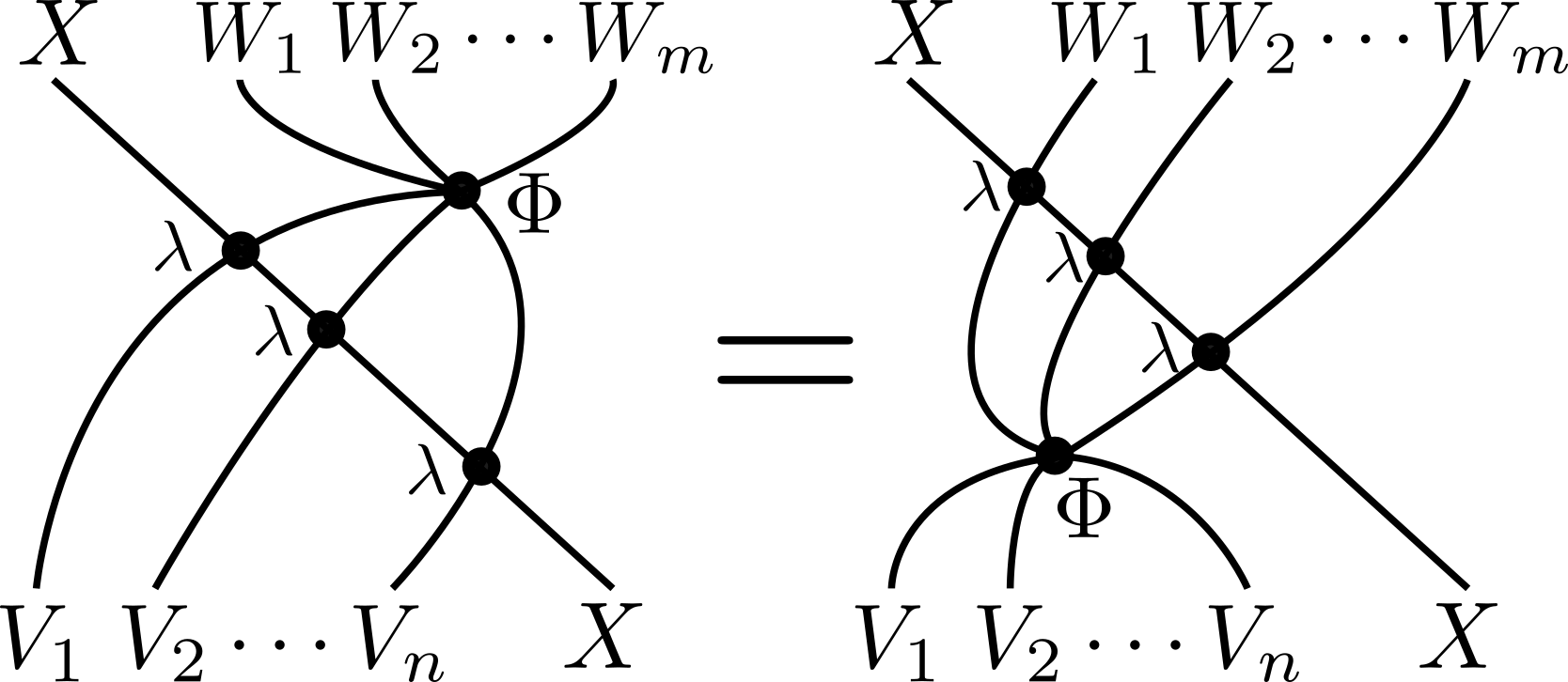}
\]

When $\cC$ is spherical fusion,
a useful alternative description of $\Hom_{\ZC}((X,\lmb),(Y,\mu))$
is as the image of a projection,
which will be very useful for checking that
a certain morphism is actually in $\ZC$:

\begin{lemma}
\label{lem:projection}
  Let $(X, \lmb),(Y,\mu) \in \Obj \ZC$.
  Define the operator
  \[
    P_{\lmb,\mu} \lcirclearrowright \Hom_\cC(X,Y)
  \]
  as follows:
  \[
    \includegraphics[height=60pt]{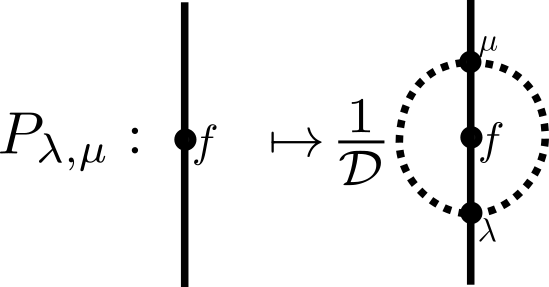}
  \]
  Then $P_{\lmb,\mu}$ is a projector onto the subspace
  $\Hom_\ZC((X,\lmb), (Y,\mu)) \subseteq \Hom_\cC(X,Y)$.

\end{lemma}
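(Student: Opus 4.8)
The plan is to reduce the assertion to two claims about $P_{\lmb,\mu}$, from which both idempotency and the identification of the image follow formally. Writing $H_\ZC := \Hom_\ZC((X,\lmb),(Y,\mu)) \subseteq \Hom_\cC(X,Y)$, the two claims are: (P1) $P_{\lmb,\mu}$ acts as the identity on $H_\ZC$, and (P2) $\operatorname{im} P_{\lmb,\mu} \subseteq H_\ZC$. Granting these, for any $f \in \Hom_\cC(X,Y)$ we have $P_{\lmb,\mu}(f) \in H_\ZC$ by (P2), hence $P_{\lmb,\mu}^2(f) = P_{\lmb,\mu}(f)$ by (P1), so $P_{\lmb,\mu}$ is idempotent. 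Moreover (P2) gives $\operatorname{im} P_{\lmb,\mu} \subseteq H_\ZC$, while (P1) gives the reverse inclusion, since every $f \in H_\ZC$ equals $P_{\lmb,\mu}(f)$ and so lies in the image. Thus the image is exactly $H_\ZC$, and everything comes down to proving (P1) and (P2) by graphical calculus, using only the half-braiding axioms and sphericity.

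For (P1), suppose $f$ already intertwines $\lmb$ and $\mu$, i.e. satisfies the right-hand equation in the definition of $\ZC$. In the diagram defining $P_{\lmb,\mu}(f)$ the morphism $f$ is enclosed in a loop colored by the dashed line (the sum over $X_j$ weighted by $d_j$), passing through $\lmb$ below $f$ and through $\mu$ above. I would use the intertwining relation to slide $f$ across the half-braiding strand; the two half-braidings then telescope by naturality, and $f$ is pulled free of the loop. What remains is an unknotted loop colored $X_j$, which by sphericity evaluates to $d_j$. Collecting the weights gives the scalar $\frac{1}{\cD}\sum_j d_j \cdot d_j = \frac{1}{\cD}\sum_j d_j^2 = 1$, so $P_{\lmb,\mu}(f) = f$.

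For (P2) --- the substantive direction --- I must show that $P_{\lmb,\mu}(f)$ intertwines $\lmb$ and $\mu$ for \emph{every} $f \in \Hom_\cC(X,Y)$. I would attach a test strand colored by an arbitrary object $V$ and verify the intertwining equation directly. The key move is to push the $V$-strand across the averaging loop: the tensor-compatibility of the half-braidings (the concise identity displayed after the definition of $\ZC$) lets me absorb the crossing of $V$ with the loop-strand $X_j$ into a recoloring of the loop, and sphericity lets me rotate the closed part of the diagram so that the $V$-strand re-emerges on the opposite side of $P_{\lmb,\mu}(f)$. What makes this work is the absorption property of the dashed loop: the weighted sum $\frac{1}{\cD}\sum_j d_j\,[X_j]$ is transparent to sliding a strand through it, because after $V$ fuses with $X_j$ the sum reindexes over $J$ without changing its value. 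This fusion-completeness identity is precisely the sort of computation recorded in the appendix, and I would invoke it rather than unfold it inline.

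The step in (P2) where the test strand is slid across the loop is the main obstacle: it is where sphericity and the fusion-completeness identity are genuinely needed, and where the bookkeeping of dimension weights and dual bases must be carried out with care. I would isolate this as a standalone appendix lemma about the dashed loop and cite it, keeping the proof of the projector property itself clean.
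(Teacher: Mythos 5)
Your proposal is correct and matches the paper's own treatment, which is simply a citation to \ocite{BalK}*{Lemma 2.2}: the standard argument there is exactly your two-step reduction, namely that $P_{\lmb,\mu}$ restricts to the identity on $\Hom_\ZC((X,\lmb),(Y,\mu))$ (slide $f$ through the loop via the intertwining relation, then evaluate the free loop to $d_j$ and sum to $\cD/\cD=1$) and that its image consists of intertwiners (merge the test strand's crossing with the loop's crossing via multiplicativity of the half-braidings, then reindex using the fusion-completeness of the dashed loop). Deferring that last sliding step to a standalone lemma is also consistent with how the paper handles it, since the analogous identity (\lemref{lem:identity_combine}) is stated in the appendix without proof.
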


\begin{proof}
See e.g. \ocite{BalK}*{Lemma 2.2}
\end{proof}

\subsection{Properties of $\ZC$}

Let us recall some well-known facts about $\ZC$.
In this section we will work with spherical fusion categories $\cC$.

\begin{proposition}{\ocite{muger2}}
\label{prp:ZC_modular}
  Let $\cC$ be a spherical fusion category. Then $\ZC$ is modular.
\end{proposition}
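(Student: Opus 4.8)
The plan is to prove that $\ZC$ is modular by establishing two facts: that $\ZC$ is a ribbon fusion category (so that the notion of modularity even makes sense), and that its $S$-matrix is non-degenerate. Since the statement asks for modularity of $\ZC$ when $\cC$ is spherical fusion, the braiding on $\ZC$ is the canonical Drinfeld braiding $c^{\cZ}_{(X,\lmb),(Y,\mu)} = \mu_X : (X,\lmb)\tnsr(Y,\mu) \to (Y,\mu)\tnsr(X,\lmb)$, built from the half-braiding $\mu$ applied to $X$. First I would verify that $\ZC$ is fusion: it inherits semisimplicity and finiteness of simple objects from $\cC$ (this can be seen from the projection description in \lemref{lem:projection}, which realizes $\Hom_{\ZC}$ as a direct summand of the finite-dimensional $\Hom_\cC$), and one checks the unit $(\one, \mathrm{id})$ is simple. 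The spherical/ribbon structure descends from that of $\cC$ because the twist $\theta_{(X,\lmb)}$ can be taken to be $\theta_X$ from $\cC$, and one checks it is a morphism in $\ZC$ and satisfies the ribbon axioms.

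The heart of the argument is non-degeneracy of the $S$-matrix, and here the clean approach is to exhibit a \emph{factorization}
\[
  \cZ(\cC) \;\simeq\; \cC \boxtimes \cC^{\bop}
\]
as ribbon categories — or at least to compare dimensions and use the structure of $\cZ(\cC)$ as a module category. The key input is the well-known computation that
\[
  \dim \cZ(\cC) \;=\; (\dim \cC)^2,
\]
which follows from the fact that $\cZ(\cC)$ is equivalent to the category of $\cC$-bimodule functors on $\cC$, or equivalently via the free-module/induction functor $I:\cC\to\cZ(\cC)$ adjoint to the forgetful functor, whose composite $I\circ F$ has the right dimension count. Given a ribbon fusion category of this dimension, there is a general criterion: $\cZ(\cC)$ is modular if and only if its Müger center is trivial, i.e. the only transparent objects are multiples of the unit.

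Accordingly, I would reduce modularity to proving that the Müger center $\cZ(\cC)'$ is trivial. The strategy is to take a transparent object $(X,\lmb)$ — one that double-braids trivially with every object of $\cZ(\cC)$ — and show $X$ must be a multiple of $\one$. The natural tool is to test transparency against the image of the induction functor $I(Y)$ for all $Y\in\cC$: transparency of $(X,\lmb)$ with respect to all $I(Y)$ forces, after chasing the definition of the Drinfeld braiding through the projector $P_{\lmb,\mu}$, that the half-braiding $\lmb$ coincides with a canonical one and that $X$ be transparent in an appropriate sense that collapses it to the unit. A cleaner packaging of this same idea is to cite Müger's theorem directly (the paper already attributes the proposition to \ocite{muger2}), namely that for any spherical fusion $\cC$ the double $\cZ(\cC)$ is modular, with the proof going through the factorization $\cZ(\cC)\simeq\cC\boxtimes\cC^{\bop}$ in the case $\cC$ is itself modular and through a direct $S$-matrix/dimension argument in general.

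The main obstacle, and the step I would budget the most care for, is the non-degeneracy of the $S$-matrix rather than the formal ribbon-fusion checks, which are essentially bookkeeping. Concretely, the delicate point is proving $\dim\cZ(\cC)=(\dim\cC)^2$ and then translating this dimension count into non-degeneracy; the passage from "correct global dimension" to "non-degenerate $S$" relies on Müger's characterization that a ribbon fusion category $\cD$ is modular precisely when $\dim\cD' \cdot \dim\cD = (\dim\cD)$ forces $\cD'=\Vect$, together with the fact that $\cZ(\cC)$ is never "too large." Since this is genuinely the content of Müger's work and is not reproved from scratch here, the honest and efficient route — consistent with the paper's stated aim of recalling standard facts about $\ZC$ — is to invoke \ocite{muger2} for the non-degeneracy, having independently verified the ribbon-fusion structure so that the statement of modularity is well-posed.
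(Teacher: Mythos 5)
Your ultimate route --- verify by hand that $\ZC$ is a ribbon fusion category, then invoke M\"uger for the non-degeneracy of the $S$-matrix --- is exactly the paper's: its proof sketch establishes abelianness, semisimplicity (via the projector $P_{\lmb,\mu}$ of \lemref{lem:projection}), finiteness of simples (via \prpref{prp:direct_summand}), the tensor, rigid, spherical and braided structures, and then explicitly declines to prove non-degeneracy, citing \ocite{muger2} and \ocite{EGNO}. However, two of your intermediate claims are wrong, and both stem from the same oversight: a spherical fusion category $\cC$ is \emph{not} assumed braided.

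First, you assert that the ribbon structure on $\ZC$ ``descends from that of $\cC$'' with $\theta_{(X,\lmb)} = \theta_X$. But $\cC$ carries no twist: a twist only exists on a braided (balanced) category, and the hypothesis here is merely spherical fusion. The twist on $\ZC$ must be manufactured from structures $\ZC$ itself possesses --- its braiding $\tilde{c}_{(X,\lmb),(Y,\mu)} = \mu_X$ together with the pivotal structure inherited from $\cC$ (which the paper checks remains spherical); in a braided spherical fusion category one then defines the twist as a partial trace of the self-braiding. Even in the case where $\cC$ happens to be braided, the formula $\theta_{(X,\lmb)} = \theta_X$ would still need justification, since $\theta_X$ need not intertwine an arbitrary half-braiding $\lmb$ with itself in the required way.

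Second, the ``clean approach'' you propose for non-degeneracy, the factorization $\cZ(\cC)\simeq\cC\boxtimes\cC^\bop$, is unavailable in this generality: $\cC^\bop$ is not even defined unless $\cC$ is braided, and when $\cC$ is braided the comparison functor $\cC\boxtimes\cC^\bop\to\ZC$ is an equivalence precisely when $\cC$ is \emph{modular} --- this is \prpref{prp:modular_factorizable} in the paper, which is invoked only later, in \secref{sec:modular_case}. A concrete failure: for $\cC = \Rep(G)$ with $G$ a nonabelian finite group, $\cZ(\cC)\simeq \cD(G)-\modl$, whose simples are indexed by pairs (conjugacy class, irreducible representation of a centralizer), which does not match $\Rep(G)\boxtimes\Rep(G)^\bop$. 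Your hedged fallback --- reduce to triviality of the M\"uger center, or simply cite \ocite{muger2} --- is the correct move and is what the paper does; just note that the dimension count $\dim\cZ(\cC)=(\dim\cC)^2$ by itself does not yield non-degeneracy, so that implication cannot be treated as a formality: it is genuinely the content of M\"uger's theorem.
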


We give a sketch of a proof
and relevant constructions since we will be using
similar techniques for the new category.
Our proof differs slightly from \ocite{muger2},
particularly proof of semisimplicity and finiteness.
The expert may wish to skip to the next section
and refer back later.

\begin{proofsketch}
We first show it is \textbf{abelian}. It is clearly additive.
The kernel, cokernel, and image of a morphism
$f:(X,\lmb) \to (Y,\mu)$ is obtained from the kernel, cokernel, and image
of $f$ thought of as a morphism in $\cC$, and the object inherits
a half-braiding from $X$ or $Y$.
We illustrate this in more detail for the kernel,
since we will repeat the construction for the elliptic Drinfeld center.
The cokernel and image follow a similar pattern.\\

For an exact sequence $0 \to K \to X \to Y$,
we have the following commutative diagram:
\[
\begin{tikzcd}
  0 \ar[r]
    & A \tnsr K \ar[r, "\id_A \tnsr \iota"] \ar[d]
    & A \tnsr X \ar[r, "\id_A \tnsr f"] \ar[d, "\lmb_A"]
    & A \tnsr Y \ar[d, "\mu_A"]\\
  0 \ar[r]
    & K \tnsr A \ar[r, "\iota \tnsr \id_A"]
    & X \tnsr A \ar[r, "f \tnsr \id_A"]
    & Y \tnsr A
\end{tikzcd}
\]
The top and bottom rows are exact by the exactness of
$A \tnsr -$ and $- \tnsr A$.
The leftmost vertical arrow exists and is unique by universal property
of kernels.
The half-braiding condition
is automatically satisfied by the uniqueness of this arrow.\\

Denote by $\lmb|_K$ the half-braiding on $K$ constructed above
(i.e. the vertical arrow in the commutative diagram above),
so that the candidate kernel constructed above is
$(K,\lmb|_{\iota}$, or simply $(K, \lmb|_K)$
when there is no confusion.
We still need to show that this object satisfies 
the universal property of kernels.
Consider the diagram $(W,\zeta) \overset{g}{\to} (X,\lmb) \to (Y, \mu)$
which composes to $gf = 0$.
As morphisms in $\ZC$ are subsets of morphisms in $\cC$,
$gf = 0$ in $\ZC$ implies 
$gf = 0$ as morphisms in $\cC$,
so $g$ must factor uniquely in $\cC$ through $\iota$,
so that there exists a unique $\cC$-morphism $h: W \to K$
such that $g = \iota h$.\\

To see that $h$ is a morphism in $\ZC$, i.e. intertwines half-braidings,
consider the following diagram:
\[
\begin{tikzcd}
  A \tnsr W \ar[drr, "\id_A \tnsr g"] \ar[dr, "\id_A \tnsr h" description]
              \ar[d, "\zeta_A"']
  \\
  W \tnsr A \ar[drr, "g \tnsr \id_A" near end] \ar[dr, "h \tnsr \id_A"']
  & A \tnsr K \ar[r, "\id_A \tnsr \iota"] \ar[d, "(\lmb|_K)_A" near start]
  & A \tnsr X \ar[d, "\lmb_A"]
  \\
  {}
  & K \tnsr A \ar[r, "\iota \tnsr \id_A"']
  & X \tnsr A
\end{tikzcd}
\]
We need to show that the front left parallelogram commutes.
This follows from the facts that: (1) all other faces commute,
(2) so composing the parallelogram in question with the bottommost arrow 
$K\tnsr A \overset{\iota\tnsr\id_A}{\to} A\tnsr K$ commutes,
and (3) this arrow ($\iota \tnsr \id_A$) is monic.\\

For the cokernel of $f$, there is a similar construction of half-braiding
on $C = \coker_\cC(f)$ using the universal property of cokernels,
and we will denote the half-braiding 
inherited from $(Y,\mu)$ by $\overline{\mu}^C$.
For the image, one simply notes that the two ways of constructing
a half-braiding on $I = \im f$ \big(from $\ker(\coker(f))$
and $\coker(\ker(f))$\big)
agree, so that $\overline{\lmb}^I = \mu|_I$.\\

\textbf{Semisimplicity} of $\ZC$:
Let $(K,\lmb|_K) \overset{\iota}{\hookrightarrow} (X,\lmb)$ be a subobject
(any other subobject
$(K, \mu)\overset{\iota'}{\hookrightarrow} (X,\lmb)$
is isomorphic to $(K,\lmb|_K)$
since by monicity, $\iota' = \ker(\coker(\iota'))$ in $\ZC$).

In particular, $K \overset{\iota}{\hookrightarrow} X$ is a subobject in $\cC$,
so by semisimplicity of $\cC$, there is a $\cC$-morphism $X \to K$
such that $p \circ \iota = \id_K$.
Consider $\bar{p} = P_{\lmb, \lmb|_K} (p)$.
Since $\iota$ is a $\ZC$-morphism, we have
$P_{\lmb, \lmb|_K} (p) \circ \iota
= P_{\lmb|_K, \lmb|_K} (p \circ \iota)
= P_{\lmb|_K, \lmb|_K} (\id_K)
= \id_K$,
thus $\iota, \bar{p}$ are an inclusion-projection pair
that exhibites $(K, \lmb|_K)$ as a direct summand of $(X,\lmb)$.\\

$\ZC$ has \textbf{finitely many simple objects}:
this is a consequence of \prpref{prp:direct_summand},
which asserts that $(Y,\lmb) \in \ZC$
is a direct summand of $\cI Y$.
So if $(X,\lmb)$ is a simple object,
with $X = \dirsum_j X_j^{\oplus n_j}$,
then $(X,\lmb) \subseteq \dirsum (\cI X_j)^{\oplus n_j}$,
hence it must be a subobject of some $\cI X_j$.
Since $\End_{\ZC} (\cI X_j)$ is finite dimensional,
there can only be finitely many simple subobjects
of $\cI X_j$.
Finally, there are only finitely many simples $X_j$ in $\cC$.\\

\textbf{Tensor structure:}
The tensor product of two objects $(X,\lmb)$ and $(Y,\mu)$
is given by
\[
  (X,\lmb) \tnsr (Y,\mu) := (X\tnsr Y, \lmb \tnsr \mu)
\]
where the tensor product $X \tnsr Y$ is from $\cC$, and
\[
  (\lmb \tnsr \mu)_A
    := (\id_X \tnsr \mu_A) \tnsr (\lmb_A \tnsr \id_Y)
    = \picmid{\includegraphics[height=30pt]{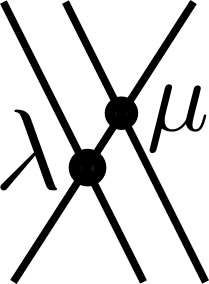}}
\]
The associativity constraint is given by the one from $\cC$,
and easily seen to respect the half-braiding.

The \textbf{unit} object is $(\one, \id_{-})$
(with the left/right unit constraints from $\cC$).
It has endomorphism ring $\kk$, so is simple.\\

\textbf{Rigidity:}
$(X,\lmb)$ has left dual $(X^*, \lmb^*)$,
where $(\lmb^*)_A = (\lmb_{\rtdual{A}})^*$.
Similarly, the right dual is $(\rtdual{X}, \rtdual{\lmb})$,
where $\rtdual{\lmb}_A = \rtdual{(\lmb_{A^*})}$.
A simple computation shows that the (co)evaluation maps
on $X$ are morphisms $\one \to (X,\lmb) \tnsr (X^*,\lmb^*)$ etc.
The pivotal structure on $\cC$ naturally induces one on $\ZC$,
because $\delta_X : X \to X^{**}$ is always a morphism
$\delta_X : (X,\lmb) \to (X^{**}, \lmb^{**})$
(see e.g. \ocite{EGNO}*{Exercise 7.13.6}).
It is clearly still spherical on $\ZC$.

The \textbf{braiding} is given by the half-braiding of the second factor:
\[
  \tilde{c}_{(X,\lmb),(Y,\mu)} = \mu_X
    : (X,\lmb) \tnsr (Y,\mu) \to (Y,\mu) \tnsr (X,\lmb)
\]
We do not prove modularity here as it will not be needed later,
referring the reader to \ocite{muger2}, \ocite{EGNO}*{Corollary 8.20.14}.

\end{proofsketch}

\begin{proposition}
\label{prp:adjoint}
The forgetful functor $\cF: \ZC \to \cC$
has a two-sided ``induction" adjoint functor
$\cI: \cC \to \ZC$,
where on objects, $\cI$ sends
\[
  X \mapsto (\induct{X}, \Gamma),
\]
where
\[
  \includegraphics[height=60pt]{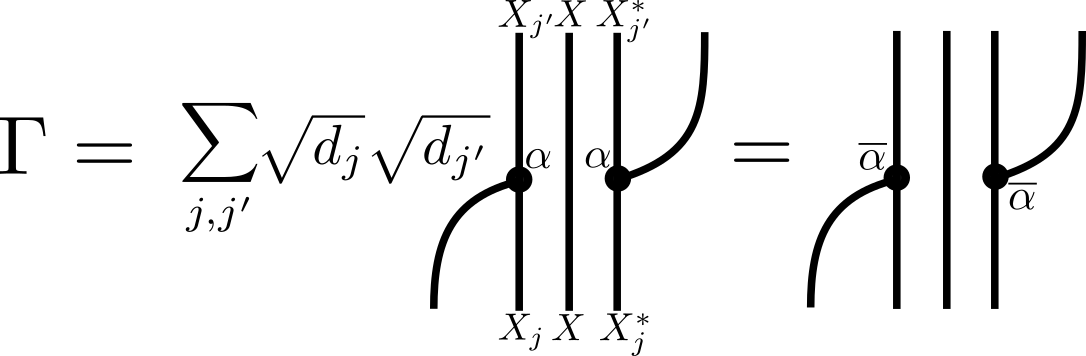}
\]
(where $\alpha$, $\overline{\alpha}$ are defined in the appendix)
and on morphisms, $f\in \Hom_\cC(X,Y)$,
\[
  f \mapsto \sum_i \id_{X_i} \tnsr f \tnsr \id_{X_i^*}
\]
The adjunction is given by the functorial isomorphisms
\begin{align*}
  \Hom_\cC(X,Y) \cong \Hom_\ZC(\cI X, (Y,\mu))
  \hspace{10pt} & \hspace{10pt}
  \Hom_\cC(X,Y) \cong \Hom_\ZC((X,\lmb), \cI Y) \\
  \includegraphics[height=150pt]{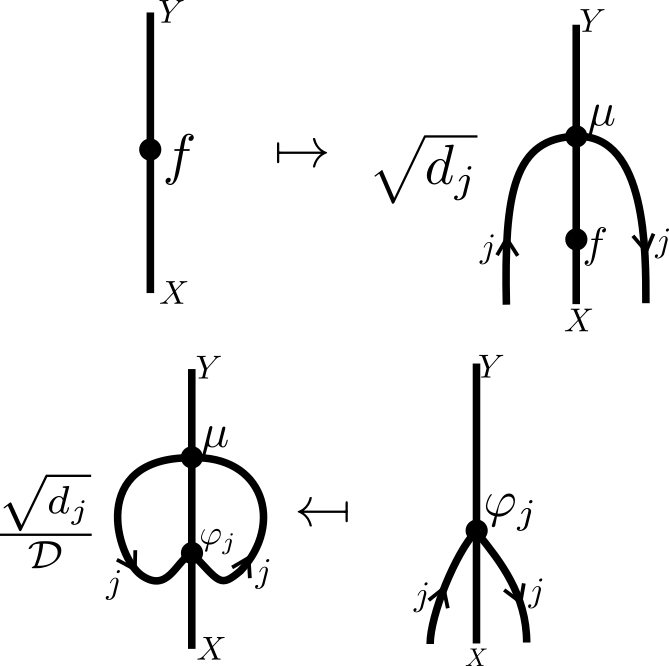}
  \hspace{10pt} & \hspace{10pt}
  \includegraphics[height=150pt]{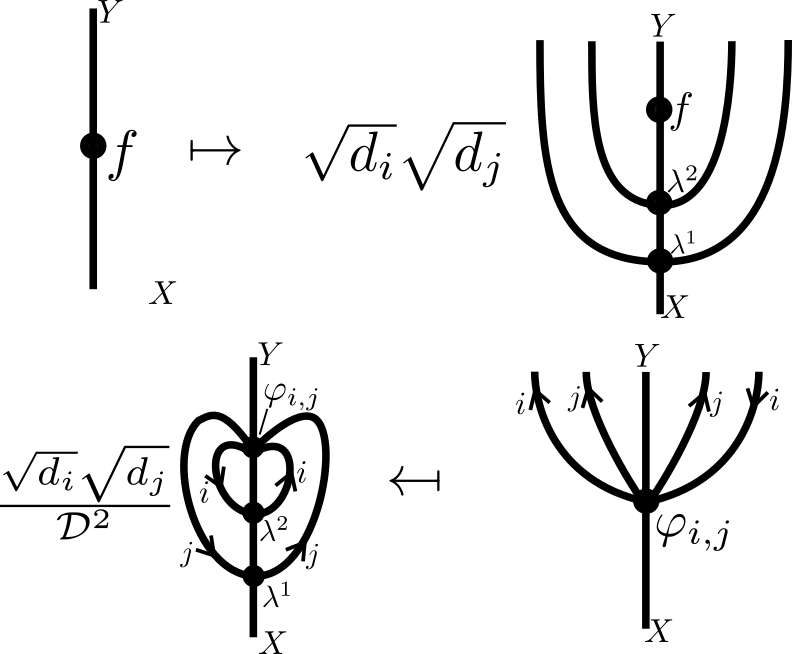}
\end{align*}

\end{proposition}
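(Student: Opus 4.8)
The plan is to prove the statement in three movements: that $\cI$ is a well-defined functor into $\ZC$, that the two displayed pairings are isomorphisms of $\kk$-vector spaces, and that they are natural. I would argue throughout with string diagrams in $\cC$, using the combined naturality/tensor identity for half-braidings displayed above and the completeness relation for the dual bases $\alpha,\overline{\alpha}$ recorded in the appendix.

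For well-definedness I would first check that $\Gamma$ is a half-braiding on $\induct{X}$. Naturality of $\Gamma$ in the external variable is read off the picture, an external morphism being absorbed into the $\alpha,\overline{\alpha}$ vertices. The only substantive point is the tensor-product condition (the left equation in the definition of $\ZC$): expanding $\Gamma_{A\tnsr B}$ and comparing it with $(\Gamma_A\tnsr\id_B)\circ(\id_A\tnsr\Gamma_B)$ reduces, after resolving the internal edge into simples, to the completeness relation for $\alpha,\overline{\alpha}$. I would then check that $\cI(f)=\sum_i\id_{X_i}\tnsr f\tnsr\id_{X_i^*}$ intertwines $\Gamma$ on $\cI X$ and $\cI Y$; since $f$ sits on the central strand that $\Gamma$ reorganizes around, this is precisely naturality of the half-braiding $\Gamma$ in the inducing object, and functoriality of $\cI$ is then immediate.

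For the left adjunction $\Hom_\cC(X,Y)\cong\Hom_\ZC(\cI X,(Y,\mu))$ I would take the forward map $\Phi$ to be the one in the first picture, whose $j$-th component sends $f$ to $X_j X X_j^*\xrightarrow{\id\tnsr f\tnsr\id}X_j Y X_j^*\xrightarrow{\mu_{X_j}\tnsr\id}Y X_j X_j^*\xrightarrow{\id\tnsr\wdtld{\ev}_{X_j}}Y$, summed over $j$; that this lands in $\Hom_\ZC$ I would confirm cheaply by showing it is fixed by the projector $P_{\Gamma,\mu}$ of Lemma~\ref{lem:projection}. For the inverse $\Psi$ I would restrict a $\ZC$-morphism $g\colon\cI X\to(Y,\mu)$ to the summand $j=0$, where $X_0=\one$, giving a $\cC$-morphism $X=\one X\one\to Y$. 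One composite is trivial: restricting $\Phi(f)$ to $j=0$ returns $f$ verbatim since $\mu_\one=\id$ and $\wdtld{\ev}_\one=\id$, so $\Psi\circ\Phi=\id$.

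The crux, and the step I expect to be the main obstacle, is the opposite composite $\Phi\circ\Psi=\id$, i.e. that a $\ZC$-morphism out of $\cI X$ is reconstructed from its unit-summand component. Here I would use the intertwining hypothesis $g\circ\Gamma_{X_j}=\mu_{X_j}\circ(\id\tnsr g)$ to transport $g$ from the $j=0$ summand to the $j$-th summand: the explicit form of $\Gamma_{X_j}$ produces the external $X_j,X_j^*$ legs from the unit strand via $\coev$ and the $\alpha,\overline{\alpha}$ vertices, so sliding $g$ past $\Gamma_{X_j}$ and capping with $\wdtld{\ev}_{X_j}$ recovers the $j$-th component of $\Phi(\Psi(g))$; summing the resulting completeness relation over $J$ returns $g$ on the nose. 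This Frobenius-reciprocity computation is exactly where the half-braiding axiom for $g$ and the sphericality of $\cC$ both enter, so I would carry it out in full diagrammatic detail. The right adjunction $\Hom_\cC(X,Y)\cong\Hom_\ZC((X,\lmb),\cI Y)$ is the upside-down mirror: its forward map uses $\coev_{X_j}$ and $\lmb_{X_j^*}$ where $\Phi$ used $\wdtld{\ev}_{X_j}$ and $\mu_{X_j}$, its inverse projects onto the $j=0$ summand of $\cI Y$, and the same two computations apply after reflecting the diagrams; alternatively, once the left adjunction is in hand two-sidedness follows formally, since $\cF$ is a tensor functor preserving duals and $\rtdual{(\cI(\rtdual X))}\cong\cI X$ identifies the right adjoint of $\cF$ with $\cI$. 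Finally, naturality of both isomorphisms in $X$ and in the target is a routine chase using only that the structural maps in the formulas for $\Phi$ and $\Psi$ are natural, which I would record last.
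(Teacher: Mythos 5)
Your proposal is correct and takes essentially the same route as the paper, which handles the half-braiding property of $\Gamma$ via \lemref{lem:Gamma_hf_brd} and then defers the remaining verifications to \ocite{K}*{Theorem 8.2} and \ocite{BalK}*{Theorem 2.3}: the standard argument in those references is exactly your scheme, with the inverse given by restriction to the $X_0=\one$ summand and the reconstruction of the $j$-th component of $g$ obtained by feeding $A=X_j$ into the intertwining relation and closing with the zig-zag identity. The only caveat is bookkeeping of the $\sqrt{d_j}$ normalizations hidden in $\alpha,\overline{\alpha}$, which you would need to track in the "full diagrammatic detail" you promise, but this does not affect the validity of the approach.
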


\begin{proof} 
By \lemref{lem:Gamma_hf_brd}, $\Gamma$ is a half-braiding.
It is also easy to check that $\cI$ is a well-defined functor,
and that the maps between $\Hom$ spaces are indeed isomorphisms,
natural in each each variable.
We refer the reader to \ocite{K}*{Theorem 8.2} and
\ocite{BalK}*{Theorem 2.3}
for more details.
\end{proof}

Note that $\cI$ is not a monoidal functor,
while $\cF$ is naturally a tensor functor,
but is not braided tensor.\\

A useful consequence of this adjunction is a description of
morphisms $\Hom_\ZC(\cI X, \cI Y)$:

\begin{corollary}
\label{cor:hom_IX_IY}
$\Hom_\ZC(\cI X, \cI Y)$ consists of morphisms of the form
\[
  \includegraphics[height=70pt]{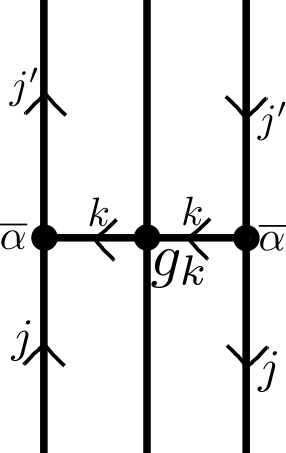}
\] where $g_k \in \Hom_\cC(X, X_k Y X_k^*)$.
\end{corollary}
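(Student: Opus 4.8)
The plan is to invoke the two-sided adjunction of \prpref{prp:adjoint}. Since $\cI$ is left adjoint to the forgetful functor $\cF$, taking the target object to be $\cI Y$ itself gives a natural isomorphism
\[
  \Hom_\ZC(\cI X, \cI Y) \;\cong\; \Hom_\cC(X, \cF\cI Y).
\]
By the definition of $\cI$ on objects we have $\cF\cI Y = \induct{Y}$, which under our summation convention unpacks to $\bigoplus_k X_k \tnsr Y \tnsr X_k^*$. Hence
\[
  \Hom_\cC(X, \cF\cI Y) \;=\; \bigoplus_k \Hom_\cC(X, X_k \tnsr Y \tnsr X_k^*),
\]
so that every $\ZC$-morphism $\cI X \to \cI Y$ corresponds to a tuple $(g_k)_k$ with $g_k \in \Hom_\cC(X, X_k Y X_k^*)$. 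This already establishes the bijection asserted by the corollary; what remains is to identify the image of $(g_k)_k$ under this isomorphism with the string diagram drawn in the statement.

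To make the correspondence explicit I would read off the left-hand adjunction picture in \prpref{prp:adjoint}, applied with the target $(Y,\mu)$ replaced by $\cI Y = (\induct{Y},\Gamma)$. Concretely, the isomorphism sends $g = \sum_k g_k \colon X \to \cF\cI Y$ to the $\ZC$-morphism obtained by applying $\cI$ to $g$ (using $f\mapsto \sum_i \id_{X_i}\tnsr f\tnsr \id_{X_i^*}$) and then post-composing with the counit $\cI\cF\cI Y \to \cI Y$, which contracts the extra $X_i, X_i^*$ strands against the half-braiding $\Gamma$. Substituting the definition of $\Gamma$ (via $\alpha,\overline{\alpha}$) and simplifying the resulting caps and cups should collapse the expression to precisely the drawn diagram, with a single strand carrying $g_k$ threaded through the $X_k$-loop around $Y$.

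The genuinely substantive point is this last identification: one must verify that the abstractly defined adjunction isomorphism coincides with the graphical composite in the statement. This is a string-diagram computation resting on the explicit formula for $\Gamma$ together with the zig-zag identities for the (co)evaluation maps, and the likeliest source of error is the bookkeeping of which strands are summed over — the index $i$ coming from $\cI$ applied to a morphism versus the index $k$ labelling $g_k$ and the $\Gamma$-loop. I expect no conceptual difficulty beyond this, since membership in $\ZC$, i.e. intertwining the two $\Gamma$ half-braidings, is automatic from the adjunction and need not be checked by hand.
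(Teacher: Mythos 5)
Your proposal is correct and follows essentially the same route as the paper, which states this corollary as a direct consequence of the adjunction in \prpref{prp:adjoint}: take the target object to be $\cI Y = (\induct{Y},\Gamma)$, identify $\Hom_\ZC(\cI X,\cI Y)\cong\Hom_\cC(X,\cF\cI Y)=\bigoplus_k\Hom_\cC(X,X_kYX_k^*)$, and unpack the explicit graphical adjunction isomorphism using the formula for $\Gamma$, which is exactly what forces the middle ``struts'' to carry a single index $k$ independent of the outer indices. Your closing observation that membership in $\ZC$ is automatic from the adjunction, and that the only real work is the diagrammatic identification of the isomorphism's image, matches the paper's intent precisely.
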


The point here is that the ``struts" in the middle are
labelled by the same simple object, and is independent of $j,j'$
(up to the $\sqrt{d_j}\sqrt{d_{j'}}$ factor hidden in $\overline{\alpha}$);
these are not true for a general morphism
$f \in \Hom_\cC(\induct{X}, X_{j'}YX_{j'}^*)$. \\

\begin{proposition}
  \label{prp:direct_summand}
$(Y,\mu) \in \ZC$ is a direct summand of $\cI Y$.
In particular,
$\ZC$ is the Karoubian completion of the full image of $\cI$.
\end{proposition}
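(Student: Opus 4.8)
The plan is to realize $(Y,\mu)$ as a retract of $\cI Y = \cI\cF(Y,\mu)$ using the two adjunctions of \prpref{prp:adjoint}. Since $\cI$ is simultaneously a left and right adjoint of $\cF$, and $\cF(Y,\mu) = Y$, applying the two adjunction isomorphisms to $\id_Y \in \Hom_\cC(Y,Y)$ produces two canonical morphisms in $\ZC$: a counit $\epsilon : \cI Y \to (Y,\mu)$, coming from $\Hom_\cC(Y,Y) \cong \Hom_\ZC(\cI Y,(Y,\mu))$, and a unit $\eta : (Y,\mu) \to \cI Y$, coming from $\Hom_\cC(Y,Y) \cong \Hom_\ZC((Y,\mu),\cI Y)$. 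Both are genuine $\ZC$-morphisms by construction, so it suffices to show that the composite $\epsilon \circ \eta \in \End_\ZC(Y,\mu)$ is invertible. If so, then $\eta$ is a split monomorphism with retraction $(\epsilon\circ\eta)^{-1}\circ\epsilon$, exhibiting $(Y,\mu)$ as a direct summand of $\cI Y$.

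To compute $\epsilon \circ \eta$ I would unwind both maps into string diagrams using the explicit formulas pictured in \prpref{prp:adjoint}. Writing $\cI Y = \induct{Y}$ (summed over $j$), the unit $\eta$ on the $j$-th summand creates an $X_j$-loop around $Y$ via $\coev_{X_j}$ together with the half-braiding $\mu$, each component weighted by a $\sqrt{d_j}$ hidden in $\overline{\alpha}$, while $\epsilon$ caps these loops off with $\ev_{X_j}$ after undoing the half-braiding. Upon composing, the $X_j$-strands close into loops and the half-braidings $\mu$ cancel against their inverses, so that each summand contributes a scalar proportional to $d_j$ times $\id_Y$; summing over $j \in J$ I expect a nonzero multiple of $\id_{(Y,\mu)}$, the nonvanishing being exactly the standing hypothesis $\cD = \sum_j d_j^2 \neq 0$. (Depending on the normalization of $\alpha,\overline{\alpha}$ the composite may already equal $\id_{(Y,\mu)}$ on the nose; for the statement we need only invertibility.) This is the technical heart of the argument and the step I expect to be most delicate: one must track the $\sqrt{d_j}$ weights and the (co)evaluation normalizations carefully, and verify that the naturality and half-braiding identities force the cross-terms to collapse rather than produce off-diagonal contributions.

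Granting that every $(Y,\mu)$ is a direct summand of $\cI Y$, the final assertion follows formally. The full image of $\cI$ is the full subcategory of $\ZC$ on the objects $\cI X$, and its Karoubian completion consists of all retracts of finite direct sums of such objects, with morphisms computed from the spaces $\Hom_\ZC(\cI X,\cI Y)$ described in Corollary~\ref{cor:hom_IX_IY}. The canonical functor from this Karoubian completion into $\ZC$ is fully faithful by construction, and the direct-summand result just proved shows it is essentially surjective, hence an equivalence. Since $\ZC$ is abelian (indeed semisimple, by the proof of \prpref{prp:ZC_modular}) it is idempotent-complete, and this identifies $\ZC$ with the Karoubian completion of the image of $\cI$, as claimed.
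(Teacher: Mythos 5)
Your proposal is correct and follows essentially the same route as the paper: the paper's pictured projection on $\cI Y$ is exactly the composite of your unit and counit maps (the adjunction isomorphisms of \prpref{prp:adjoint} applied to $\id_Y$, with the $\sqrt{d_j}$ weights hidden in $\overline{\alpha}$), and its proof likewise reduces to checking these halves are $\ZC$-morphisms and that the loop closure produces a nonzero multiple of $\id_{(Y,\mu)}$. The only cosmetic difference is that you obtain the intertwining property formally from the adjunction, where the paper verifies it by a direct string-diagram computation.
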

\begin{proof}
It is easy to see that the morphism defined below (on the left)
projects $\cI Y$ onto a direct summand that is isomorphic to $(Y,\mu)$.
For example, the first half of this projection is a morphism
from $\cI$ to $(Y,\mu)$ in $\ZC$:
\[
  \includegraphics[height=60pt]{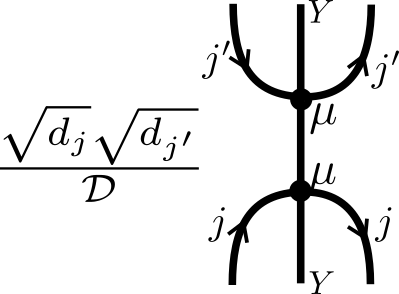}
  \hspace{10pt};\hspace{10pt}
  \includegraphics[height=60pt]{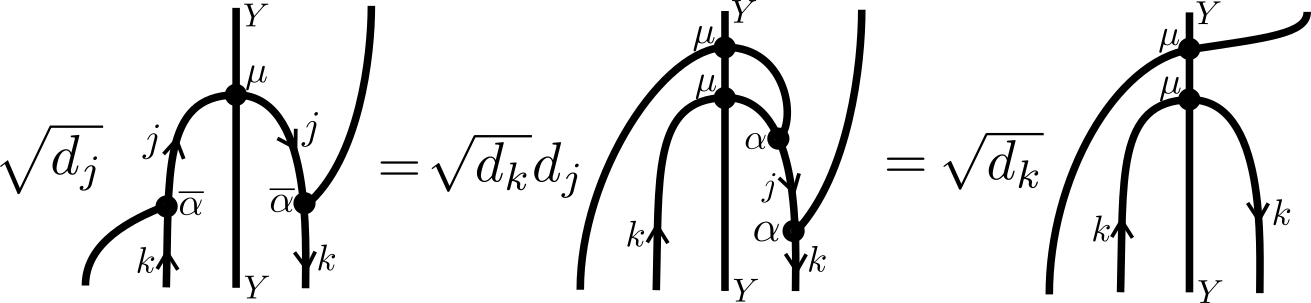}
\]
(note the change from $\alpha$ to $\overline{\alpha}$ in the first equality;
in the second equality, we use \lemref{lem:identity_combine}.)
\end{proof}

\subsection{Special $\cC$'s}
In this section we study what happens when we take special $\cC$'s,
in particular when $\cC$ is modular and when $\cC$ is given
as the category of finite-dimensional representations of a
Hopf algebra $H$. We will be considering analogs of these
results for the elliptic Drinfeld center in
Sections \ref{sec:modular_case} and \ref{sec:C_Hmod}.

\subsubsection{$\cC$ Modular}
Since $\ZC$ is modular, one may expect interesting things to happen
when $\cC$ itself is modular. Indeed, one has the following:

\begin{proposition}{\ocite{ENO1}}
\label{prp:modular_factorizable}
If $\cC$ is modular, then
$\ZC \simeq_{\tnsr, \text{br}} \cC \boxtimes \cC^\bop$,
where $\cC^\bop$ is the same underlying fusion category
with the opposite braiding,
and $\boxtimes$ is Deligne's tensor product \ocite{De1}.
\end{proposition}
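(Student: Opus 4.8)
The plan is to write down an explicit braided tensor functor $F : \cC \boxtimes \cC^{\bop} \to \ZC$ and then show that modularity forces it to be an equivalence. On the two tensor factors, set $\iota : \cC \to \ZC$, $X \mapsto (X, c_{-,X})$ (the functor already appearing in \thmref{thm:modular}), and $\iota^{\bop} : \cC^{\bop} \to \ZC$, $Y \mapsto (Y, (c_{Y,-})^{-1})$, where the half-braiding $A \tnsr Y \to Y \tnsr A$ is $c_{Y,A}^{-1}$. Then define
\[
  F(X \boxtimes Y) := \iota(X) \tnsr \iota^{\bop}(Y) = (X \tnsr Y, \lmb),
\]
the tensor product being that of $\ZC$, so that $\lmb$ uses $c_{-,X}$ on the first leg and $c_{Y,-}^{-1}$ on the second; on morphisms $F$ is the evident map.

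First I would check that $\iota$ and $\iota^{\bop}$ are braided tensor functors into $\ZC$. That each prescription gives a genuine half-braiding and is monoidal is a direct application of the hexagon axioms for $c$; computing the braiding of $\ZC$ on the images recovers $\tilde c_{\iota X, \iota X'} = c_{X,X'}$ and $\tilde c_{\iota^{\bop} Y, \iota^{\bop} Y'} = c_{Y',Y}^{-1}$, the latter being exactly the braiding of $\cC^{\bop}$. The key structural point, needed for $F$ to descend along the Deligne product as a \emph{braided} functor, is that the two images mutually centralize: the double braiding
\[
  \tilde c_{\iota^{\bop} Y, \iota X} \circ \tilde c_{\iota X, \iota^{\bop} Y}
    = c_{Y,X} \circ c_{Y,X}^{-1} = \id
\]
is trivial, matching the trivial braiding between $X \boxtimes \one$ and $\one \boxtimes Y$ in $\cC \boxtimes \cC^{\bop}$. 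By the universal property of $\boxtimes$ this produces a well-defined braided tensor functor $F$; none of the above uses modularity.

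It remains to show $F$ is an equivalence. Since $\cC$ modular implies $\cC^{\bop}$ modular, the source is a fusion category of dimension $\cD \cdot \cD = \cD^2$, while $\ZC$ has dimension $\cD^2$ as well (by \prpref{prp:ZC_modular} together with the standard identity $\dim \ZC = (\dim \cC)^2$). A fully faithful tensor functor between fusion categories of equal dimension has image a fusion subcategory of full dimension, hence is automatically an equivalence; so it suffices to prove $F$ fully faithful, i.e. that the simples $X_i \boxtimes X_j$ are sent to pairwise non-isomorphic simple objects of $\ZC$. For simplicity I would compute $\End_{\ZC}(F(X_i \boxtimes X_j))$ as the image of the projector $P_{\lmb,\lmb}$ of \lemref{lem:projection} on $\End_\cC(X_i \tnsr X_j)$; evaluating this projector graphically expresses its rank through the entries of the $S$-matrix, and nondegeneracy of $S$ pins the rank down to $1$. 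The same computation yields $\Hom_{\ZC}(F(X_i \boxtimes X_j), F(X_k \boxtimes X_l)) = 0$ for $(i,j) \neq (k,l)$.

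The main obstacle is precisely this translation of nondegeneracy of the braiding into full faithfulness: the formal construction of $F$ is braiding-agnostic, but the simplicity of each $F(X_i \boxtimes X_j)$ and the vanishing of the off-diagonal Hom-spaces genuinely require that the only transparent objects of $\cC$ be multiples of $\one$ (equivalently, that the M\"uger center of $\cC$ is trivial, which is the content of modularity). One may instead package this as the abstract principle that a braided tensor functor out of a nondegenerate braided fusion category is fully faithful, or establish essential surjectivity directly by exhibiting every simple summand of the induction objects $\cI X_k$ in the image of $F$ via \prpref{prp:direct_summand}; either way I expect the nondegeneracy of $S$ to be the crux, after which braidedness of $F$ upgrades the equivalence to one of braided tensor categories.
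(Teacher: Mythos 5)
Your proposal is correct and follows essentially the same route as the paper's proof sketch: the same pair of braided embeddings $X \mapsto (X, c_{-,X})$ and $Y \mapsto (Y, c_{Y,-}^{-1})$ assembled into a functor out of $\cC \boxtimes \cC^\bop$, with modularity invoked for full faithfulness and a dimension comparison (one of the two alternatives the paper cites, via \ocite{EGNO}*{Prop 8.20.12}) for essential surjectivity. Your explicit check that the two images mutually centralize, and the projector/$S$-matrix sketch of full faithfulness, are just more detailed renderings of steps the paper leaves implicit.
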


\begin{proofsketch}
There are braided tensor functors
$\cC \to \ZC$ and $\cC^\bop \to \ZC$
given by $X \mapsto (X, c_{-,X})$ and $X \mapsto (X, c_{X,-}^\inv)$
respectively, where recall that $c_{-,-}$ is the braiding on $\cC$.
These fit together into a braided tensor functor
$\cC \boxtimes \cC^\bop \to_{\tnsr, \text{br}} \ZC$,
and the modularity of $\cC$ ensures that this is fully faithful.
To show essential surjectiveness,
one checks that this functor hits all the simple objects of $\ZC$
by counting dimensions of endomorphism algebras $\End_\ZC(\induct{X_k})$,
or one checks that their Frobenius-Perron dimensions are the same,
as in \ocite{EGNO}*{Prop 8.20.12}.
\end{proofsketch}

We will use this in studying the elliptic Drinfeld center $\ZZC$
when $\cC$ is modular in \secref{sec:modular_case},
in particular it will be the key fact in proving 
\lemref{lem:I1_kills_braiding}.\\

\subsubsection{$\cC = H-\modl$}
\label{sec:drfld_symmetric}
Next we consider when $\cC = H-\modl$,
the category of finite-dimensional modules over
a finite-dimensional spherical Hopf algebra $H$.
We outline the construction of $\cD(H)$,
Drinfeld's quantum double of $H$, defined in \ocite{Dr},
which is a ribbon Hopf algebra (in the sense of \ocite{RT}),
and show that $\ZC \simeq \cD(H)-\modl$.
Since we work with semisimple $\cC$,
we implicitly assume that $H$ is semisimple,
even though the construction of $\cD(H)$ does not use semisimplicity.
Most of this is follows \ocite{EGNO}*{Section 7.14};
see also \ocite{Kas}*{Section XIII.5}.\\

\begin{defn-prop}
\label{def:drfld_double_alg}
Let $(H, m, 1, \Delta, \veps, S, v)$ be a finite-dimensional
spherical Hopf algebra.
The \emph{Drinfeld double} of $H$, denoted $\cD(H)$,
is a ribbon Hopf algebra defined as folows:
\begin{itemize}

\item As a coalgebra, it is $H \tnsr H^{*,\cop}$,
where $H^{*,\cop} = (H, \Delta^*, \veps, (m^*)^\cop, 1, - \circ S^\inv)$
is the dual Hopf algebra with opposite comultiplication.

\item As an algebra, the obvious inclusions
$H \cong H \tnsr 1 \hookrightarrow H \tnsr H^{*,\cop}$ and
$H^{*,\cop} \cong 1 \tnsr H^{*,\cop}
  \hookrightarrow H \tnsr H^{*,\cop}$
are algebra maps, and the commutation relation
is given by
\[
  fh =
  \eval{f_3,S^\inv(h_1)}\eval{f_1,h_3}
    h_2 f_2
\]
where we use Sweedler's notation
$\Delta^2(h) = h_1 \tnsr h_2 \tnsr h_3$ and
$\Delta^2(f) = ((m^*)^\cop)^2(f) = f_3 \tnsr f_2 \tnsr f_1$
(note the opposite numbering is used in \ocite{EGNO}).

\item The antipode is given componentwise,
i.e. $S(hf) = f(S^\inv(\cdot)) S(h)$

\item $v \in H \hookrightarrow \cD(H)$ is the pivotal element.

\item The $R$-matrix is
$\sum h_i \tnsr h_i^* \in \cD(H) \tnsr \cD(H)$,
where $\{h_i\}$ is a basis of $H$,
and $\{h_i^*\}$ the dual basis of $H^{*,\cop}$.
\hfill $\triangle$
\end{itemize}

\end{defn-prop}

\begin{proof}
Straightforward elementary computations,
e.g. see \ocite{EGNO}*{Section 7.14}.
\end{proof}

\begin{example}[Group Algebra]
\label{xmp:group_algebra}
For $H=\kk[G]$, where $G$ is a finite group,
$H^\dcop \cong F(G)^\cop$, the Hopf algebra of functions on $G$
with the opposite comultiplication
$m^\dcop(\delta_g) = \sum_{g_1g_2 = g} \delta_{g_2} \tnsr \delta_{g_1}$.
By definition, $\{g\}_{g\in G}$ serves as a basis for $\kk[G]$;
let $\{\delta_g\}_{g\in G}$ be the corresponding dual basis
of $F(G)^\cop$.
Then in these bases, for $h \in G$ and $\delta_g \in F(G)^\cop$,
the commutation relations between 
$\kk[G]$ and $F(G)^\cop$ is simply
\[
  \delta_g h = h \delta_{h^\inv g h}
\]

Denote $\cD(G) := \cD(\kk[G])$.
Using this explicit description of $\cD(G)$,
we can interpret representations of $\cD(G)$ as
$G$-equivariant bundles over $G$, where $G$ acts on itself
by conjugation.
Briefly, the $\delta_g$ are projections,
giving us a (vector space) decomposition of a representation
$V$ of $\cD(G)$
into $\dirsum_{g \in G} V_g$, where $V_g = \delta_g V$.
Then for $v_g = \delta_g v \in V_g$,
\[
  h \cdot v_g
  = h \delta_g \cdot v
  = \delta_{hgh^\inv} h\cdot v
  \in V_{hgh^\inv}
\]
thus the bundle with $V_g$ sitting over $g\in G$
is $G$-equivariant.\\

For each conjugacy class $\bar{g} \in \overline{G}$,
the sum $\delta_{\bar{g}} = \sum_{g\in \bar{g}} \delta_g$ is
a central idempotent,
and the collection of such $\delta_{\bar{g}}$ is pairwise
orthogonal and sum to 1.
So the category of finite-dimensional representations
$\cD(G)-\modl$ is semisimple with simple objects
$V_{\bar{g}, \pi}$ labelled by pairs $(\bar{g},\pi)$,
where $\bar{g}\in \bar{G}$ is a conjugacy class of $G$
and $\pi \in \widehat{Z(g)}$ is an isomorphism class
of irreducible representations of the centralizer $Z(g)$
of some representative $g\in \bar{g}$.
We refer the reader to \ocite{BakK}*{Section 3.2} for
further details.
(Note that in terms of our set up in \defref{def:drfld_double_alg},
they are working with $\cD(F(G)) \cong \cD(\kk[G^\op])^\cop$.)\\

Let us also note that $\cD(\kk[G]) \cong \cD(F(G))$,
but this will not quite hold true for the elliptic double
$\cDD(H)$ defined later in \defref{def:elliptic_double}
- there the input Hopf algebra $H$ must at least be ribbon,
so $\cDD(F(G))$ is not even defined,
unless $G$ is abelian.
We will discuss the elliptic analog of this example in
\xmpref{xmp:group_algebra_elliptic}.
\hfill $\triangle$
\end{example}

\begin{proposition}
\label{prp:ZRep}
For a finite-dimensional Hopf algebra $H$, 
let $\cC = H-\modl$,
the category of finite-dimensional left $H$-modules.
Then $\ZC \simeq_{\tnsr, \text{br}} \cD(H)-\modl$.
\end{proposition}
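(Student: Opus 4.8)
The plan is to exhibit a braided tensor equivalence $\ZC \simeq_{\tnsr,\text{br}} \cD(H)\text{-}\modl$ by unwinding what a half-braiding on an $H$-module amounts to, and recognizing the resulting data as exactly a $\cD(H)$-action. The key observation is that a half-braiding $\lmb$ on $(X,\lmb)\in\ZC$ is a natural family of $H$-module isomorphisms $\lmb_A : A\tnsr X \to X\tnsr A$; by naturality and the tensor-compatibility condition, all of this data is determined by a single linear map, and that map is precisely the additional structure needed to upgrade the $H$-action on $X$ to a $\cD(H)$-action.

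First I would reduce the half-braiding to its value on the regular representation, or equivalently observe that by naturality $\lmb$ is determined by $\lmb_H$ (since every $A$ is a quotient of a free module), and then extract from $\lmb$ a map $\rho^* : X \to H^{\dcop}\tnsr X$, i.e.\ an $H^{\dcop}$-coaction, or dually an action of $H^{\dcop}$ on $X$. Concretely, one sets $f\cdot x := (\ev\tnsr\id_X)(\,f\tnsr\lmb_H(\,\cdots))$ for $f\in H^{*}$, reading off the coefficients of $\lmb_H$ against a basis. The naturality of $\lmb$ translates into $H$-linearity statements, the hexagon/tensor-compatibility condition on $\lmb$ (the left-hand equation in the definition of $\ZC$) becomes associativity/comultiplicativity of the $H^{\dcop}$-action, and the interplay between the $H$-action and this new action reproduces exactly the commutation relation $fh = \eval{f_3,S^\inv(h_1)}\eval{f_1,h_3}\,h_2 f_2$ from \defref{def:drfld_double_alg}. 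Thus an object of $\ZC$ is the same as a module over the algebra $\cD(H)$, and a morphism in $\ZC$ (one intertwining the half-braidings, the right-hand condition in the definition) is exactly a $\cD(H)$-linear map. This gives the equivalence of abelian categories on the nose.

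Next I would match the monoidal and braided structures. The tensor product on $\ZC$ is $(X,\lmb)\tnsr(Y,\mu) = (X\tnsr Y,\lmb\tnsr\mu)$ with the half-braiding displayed in the proof of \prpref{prp:ZC_modular}; under the dictionary above this must correspond to the coalgebra structure of $\cD(H)$, namely $H\tnsr H^{\dcop}$ with the stated comultiplication. Here I would check that the $\cD(H)$-action on a tensor product of modules, defined via $\Delta_{\cD(H)}$, agrees term-by-term with the formula for $\lmb\tnsr\mu$; the $H$-factor is immediate and the $H^{\dcop}$-factor matches because $H^{\dcop}$ carries the \emph{opposite} comultiplication, which is precisely what the ordering in $(\id_X\tnsr\mu_A)\tnsr(\lmb_A\tnsr\id_Y)$ produces. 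Finally, the braiding $\tilde c_{(X,\lmb),(Y,\mu)} = \mu_X$ must be identified with the action of the $R$-matrix $\sum h_i\tnsr h_i^{*}$; this is a direct computation showing that evaluating $\mu_X$ in the dictionary reproduces $x\tnsr y \mapsto \sum (h_i^{*}\cdot y)\tnsr(h_i\cdot x)$ followed by the flip, which is the standard $R$-matrix braiding on modules over a quasitriangular Hopf algebra.

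The main obstacle is bookkeeping rather than conceptual: getting the Sweedler-index conventions, the appearance of $S^{-1}$, and the opposite comultiplication on $H^{\dcop}$ all to line up so that the tensor-compatibility condition on $\lmb$ yields \emph{exactly} the commutation relation of \defref{def:drfld_double_alg}, and not that relation for some variant (e.g.\ $\cD(H^{\op})$ or $\cD(H)^{\cop}$). Since \xmpref{xmp:group_algebra} already flags that such $\op/\cop$ discrepancies are easy to incur, I would pin down the dictionary by testing it on $H=\kk[G]$, where a half-braiding on a $G$-graded $G$-module is visibly the $G$-equivariant-bundle-over-$G$ data described there, and confirm that the induced relation reduces to $\delta_g h = h\,\delta_{h^{-1}gh}$. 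Once the conventions are fixed consistently, the remaining verifications are routine, so I would present the argument as a chain of equivalences with the bulk of the computation deferred to \ocite{EGNO}*{Section 7.14} or \ocite{Kas}*{Section XIII.5}.
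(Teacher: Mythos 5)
Your proposal is correct and follows essentially the same route as the paper's proof: read off the $H^{*}$-action from $\lmb_H$ (using that naturality plus finite-dimensionality reduces $\lmb$ to $\lmb_H$), identify the tensor-compatibility and $H$-linearity conditions with the $\cD(H)$ relations, match the monoidal structure with the coalgebra structure $(m^*)^{\cop}$ and the braiding with the $R$-matrix $\sum h_i \tnsr h_i^*$, and defer the detailed convention-checking to \ocite{EGNO}*{Section 7.14}. The only cosmetic difference is that the paper first builds the functor $\cD(H)\text{-}\modl \to \ZC$ via the $R$-matrix and then inverts it, while you start from $\ZC$; your sanity check on $H=\kk[G]$ is a sensible addition but not a change of method.
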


\begin{proofsketch}

We essentially follow \ocite{EGNO}*{Section 7.14},
referring the reader to it for more details.\\

The functor $\cD(H)-\modl \to \ZC$ is constructed as follows.
Let $X$ be a left $\cD(H)$-module. It is in particular
an $H$-module, i.e. an object in $\cC$.
As an object of $\cC$, it has a natural half-braiding,
given by the $R$-matrix of $\cD(H)$:
for $A \in \cC$ some $H$-module, define
\[
  \lmb_A : A \tnsr X \to X \tnsr A
\]
to be the linear map given by first acting
by $R = \sum h_i \tnsr h_i^*$
(the action is defined on $A \tnsr X$
because the first factors appearing in $R$ are in $H$),
and then swapping the factors.
This is a half-braiding on $X$ because
$(\Delta \tnsr \id)(R) = R^{13}R^{23} \in H \tnsr H \tnsr H^\dcop$.
Thus, we have defined a functor
$\cD(H)-\modl \to \ZC$.\\

For the other way,
let $(X, \lmb) \in \ZC$.
By functoriality of $\lmb$ and finite-dimensionality,
$\lmb$ is completely determined by
$\lmb_H: H \tnsr X \to X \tnsr H$.
Then we define for $f\in H^*$,
\[
  f \cdot x = \eval{\id_X \tnsr f, \lmb_H (1 \tnsr x)}
\]
Said otherwise, it is the action of $H^*$ on $X$
such that $\lmb = P \cdot R$.
There are commutation relations between these actions,
which can be derived from looking at the
Yang-Baxter equation on $H \tnsr X \tnsr H^*$,
and one sees that they are precisely those
as imposed on $\cD(H)$.\\

Since morphisms of $\ZC$ are precisely those that intertwine
half-braidings, it is easy to see that they are
also precisely those that intertwine the $H^*$-actions,
so that we have a fully faithful functor back $\ZC \to \cD(H)-\modl$,
and it's not hard to see that it is an equivalence.\\

So far we haven't used the coalgebra structure of $\cD(H)$.
The monoidal structure on $\ZC$ should carry over to
$\cD(H)-\modl$, and the claim is that it agrees
with that which arise from the coalgebra structure on $\cD(H)$.
We can see this as follows:
The tensor product of $(X,\lmb)$ and $(Y,\mu)$
is $(X \tnsr Y, \lmb \tnsr \mu)$,
where recall $\lmb \tnsr \mu$ is just braiding by $\lmb$
and then $\mu$, so
\[
  (\lmb \tnsr \mu)_H : 1 \tnsr x \tnsr y
    \mapsto h_i^*\cdot x \tnsr h_j^*\cdot y \tnsr h_jh_i
    = (m^*)^\cop(h_k^*) \cdot (x \tnsr y) \tnsr h_k
\]
so the action of $f$ is
\[
  f \cdot (x \tnsr y)
  = \eval{1\tnsr f, (\lmb \tnsr \mu)_H(1 \tnsr (x \tnsr y)}
  = \eval{f,h_k} (m^*)^\cop(h_k^*) \cdot (x \tnsr y)
  = (m^*)^\cop(f) \cdot (x \tnsr y)
\]

So the functors we have defined here are tensor functors,
and in fact braided.
The pivotal structure on $\cC$ is naturally a pivotal
structure on $\ZC$,
and these functors clearly respect the pivotal structures.
Once again we refer the reader to \ocite{EGNO}*{Section 7.14}
for more details.
\end{proofsketch}

\begin{corollary}
  If $H$ is semisimple, then so is $\cD(H)$.
\end{corollary}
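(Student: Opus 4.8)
The plan is to deduce semisimplicity of the \emph{algebra} $\cD(H)$ from semisimplicity of the \emph{category} $\cD(H)-\modl$, using the two structural results already in hand. First I would record the standard dictionary: for a finite-dimensional algebra $A$ over $\kk$, the algebra $A$ is semisimple (Artin--Wedderburn) if and only if its category $A-\modl$ of finite-dimensional modules is a semisimple abelian category; one direction is Wedderburn's theorem, and the other follows by applying semisimplicity of the category to the regular module $A$ acting on itself. Since $\dim_\kk \cD(H) = (\dim_\kk H)^2 < \infty$ by \defref{def:drfld_double_alg}, it therefore suffices to show that $\cD(H)-\modl$ is a semisimple category.

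For this I would invoke \prpref{prp:ZRep}, which furnishes an equivalence $\ZC \simeq \cD(H)-\modl$. Semisimplicity of an abelian category is preserved under equivalence, so the problem reduces to showing that $\ZC$ is semisimple. Here the key input is that, under the standing hypotheses of this subsection, $H$ is a finite-dimensional \emph{spherical} Hopf algebra, so that $\cC = H-\modl$ is a spherical fusion category: semisimplicity of $H$ makes $\cC$ semisimple, finite-dimensionality gives finitely many simple objects, the antipode furnishes duals (rigidity), and the pivotal element $v$ supplies the spherical structure, with the global dimension $\cD = \sum_j d_j^2$ automatically nonzero in characteristic $0$. With $\cC$ spherical fusion, \prpref{prp:ZC_modular} shows that $\ZC$ is modular, and in particular a semisimple abelian category. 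Chaining these implications — $\ZC$ semisimple, hence $\cD(H)-\modl$ semisimple, hence $\cD(H)$ semisimple — completes the argument.

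The steps are essentially bookkeeping once \prpref{prp:ZC_modular} and \prpref{prp:ZRep} are available; the only point deserving care is verifying that the hypotheses of \prpref{prp:ZC_modular} are genuinely met, i.e. that $H$ semisimple really does make $\cC$ a spherical \emph{fusion} category with nonvanishing global dimension. I expect this verification — rather than any direct computation with the algebra $\cD(H)$ itself — to be the main (and only mild) obstacle, and it is dispatched by standard Hopf-algebraic facts over an algebraically closed field of characteristic $0$.
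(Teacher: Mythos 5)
Your proposal is correct and follows exactly the route the paper intends: the corollary is stated without proof precisely because it follows from \prpref{prp:ZRep} ($\ZC \simeq \cD(H)-\modl$) together with \prpref{prp:ZC_modular} ($\ZC$ modular, hence semisimple, since $H$ semisimple makes $\cC = H-\modl$ spherical fusion), plus the standard equivalence between semisimplicity of a finite-dimensional algebra and of its module category. Your extra care in checking that $\cC$ is genuinely spherical fusion with $\cD \neq 0$ is exactly the right point to flag, and it is indeed automatic over an algebraically closed field of characteristic $0$.
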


\section{The Elliptic Drinfeld Center}
\label{sec:elliptic}

In this section we define the \emph{Elliptic Drinfeld Center}
$\ZZC$ of a \emph{braided} monoidal category $\cC$.
It is analogous to the Drinfeld center $\ZC$,
in that objects consist of an object of $\cC$
and, not one, but two half-braidings which are related by
some equation involving the braiding on $\cC$.
As mentioned in the introduction, the motivation for constructing
$\ZZC$ comes from studying the value of the extended
Crane-Yetter TQFT on a once-punctured torus.
We discuss some of the properties of the
elliptic Drinfeld center parallel to those of
the Drinfeld center as discussed in \secref{sec:drinfeld_center}.
We put a monoidal structure on $\ZZC$
in \secref{sec:tensor_product},
and discuss an $\SLZ$-action on $\ZZC$ in \secref{sec:sl2z}.
Just as with $\ZC$, $\ZZC$ has particularly nice descriptions
when $\cC$ is modular and when $\cC = H-\modl$ for a
\emph{quasi-triangular} Hopf algebra $H$,
which we discuss in Sections \ref{sec:modular_case}
and \ref{sec:C_Hmod} respectively.

\subsection{Definition ad Properties of $\ZZC$}

\begin{definition}
\label{def:ZZC}
Given a braided monoidal category $\cC$, the \emph{Elliptic Drinfeld Center}
of $\cC$, denoted $\ZZC$, is the category with objects of the form
$(X, \lmb^1, \lmb^2)$,
where $X$ is an object in $\cC$,
and $\lmb^1,\lmb^2: - \tnsr X \to X \tnsr -$
are half-braidings on $X$ satisfying
the following compatibility condition
which we refer to as \textbf{COMM}:
\[
  \textrm{\textbf{COMM:}}\hspace{5pt}
  \picmid{\includegraphics[height=70pt]{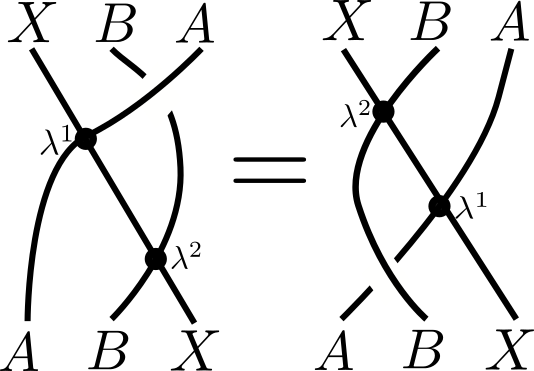}}
\]

We point out that on the l.h.s
one has the universal braiding $c_{-,-}$,
while on the r.h.s. it is the \emph{reverse} $c_{-,-}^\inv$.
We also note that COMM is a condition on an \emph{ordered} pair
of half-braidings, in that if $(\lmb^1,\lmb^2)$ satisfies COMM,
it does not imply that $(\lmb^2,\lmb^1)$ satisfies COMM;
however in some sense $\lmb^1,\lmb^2$ should be treated
on equal footing - see \rmkref{rmk:first_second_hfbrd}.\\

The morphisms from $(X,\lmb^1,\lmb^2)$ to $(Y, \mu^1, \mu^2)$
are given by those in $\Hom_\cC(X,Y)$ that intertwine
both half-braidings, i.e.
\[
  \Hom_\ZZC((X,\lmb^1,\lmb^2), (Y,\mu^1,\mu^2))
  := \Hom_\ZC((X,\lmb^1),(Y,\mu^1)) \bigcap \Hom_\ZC((X,\lmb^2),(Y,\mu^2))
\]
\hfill $\triangle$
\end{definition}

\begin{remark}
\label{rmk:COMM_strands}
As a visual aid, it is helpful to think of COMM as allowing
the strands labelled $A$ and $B$ to pass through each other
- at the moment they meet, they should be transverse to each other,
so that if the $A$ strand was above, then after meeting,
the $A$ strand would be below.\\
Morally, the $A$ strand goes around a meridian in the once-punctured
torus, while the $B$ strand goes around a longitude,
and a pair of meridian and longitude can be isotoped to
intersect once and intersect transversally.\\
Later in \secref{sec:conclusion}, we discuss a variant of COMM,
where instead of being transverse,
the strands become tangent at the moment of meeting.
This reflects the fact that two embedded closed curves
in a thrice-punctured sphere can be isotoped to not intersect
each other.
\hfill $\triangle$\\
\end{remark}

For the rest of the section, we will work with premodular $\cC$.
A simple consequence of COMM is the following
analog of \lemref{lem:projection}:

\begin{lemma}
\label{lem:projection_elliptic}
  Let $(X,\lmb^1,\lmb^2), (Y,\mu^1,\mu^2)$ be objects in $\ZZC$.
  The projections
  $P_{\lmb^1,\mu^1}, P_{\lmb^2,\mu^2} \lcirclearrowright \Hom_\cC(X,Y)$
  defined in \lemref{lem:projection} commute,
  and hence we have
  \[
    \Hom_\ZZC((X,\lmb^1,\lmb^2), (Y,\mu^1,\mu^2))
    = \im(P_{\lmb^1,\mu^1} \circ P_{\lmb^2,\mu^2})
    \subseteq \Hom_\cC(X,Y).
  \]
\end{lemma}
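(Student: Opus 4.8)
The plan is to reduce the statement to two independent facts: first, that the two projectors $P_{\lmb^1,\mu^1}$ and $P_{\lmb^2,\mu^2}$ commute as operators on $\Hom_\cC(X,Y)$; and second, that for commuting idempotents the image of the composite equals the intersection of the images. The second fact is pure linear algebra and essentially immediate, so the real content is the commutation. By \lemref{lem:projection}, each $P_{\lmb^i,\mu^i}$ is a projector whose image is exactly $\Hom_\ZC((X,\lmb^i),(Y,\mu^i))$, and by \defref{def:ZZC} the morphism space in $\ZZC$ is the intersection of these two subspaces, so once commutativity is in hand the displayed equality will follow.

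**Reducing to commutativity.** First I would record the general lemma: if $P,Q$ are commuting idempotents on a vector space $V$, then $PQ$ is an idempotent with $\im(PQ) = \im P \cap \im Q$. Indeed $PQPQ = P^2Q^2 = PQ$ gives idempotency; $\im(PQ) \subseteq \im P$ is clear, and $\im(PQ) = \im(QP) \subseteq \im Q$ uses the commutation, so $\im(PQ) \subseteq \im P \cap \im Q$; conversely if $v \in \im P \cap \im Q$ then $Pv = Qv = v$, whence $PQv = v$ and $v \in \im(PQ)$. Applying this with $P = P_{\lmb^1,\mu^1}$, $Q = P_{\lmb^2,\mu^2}$ and invoking \lemref{lem:projection} to identify the two images with $\Hom_\ZC((X,\lmb^i),(Y,\mu^i))$, and \defref{def:ZZC} to identify the intersection with $\Hom_\ZZC$, completes the argument modulo the commutation.

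**The commutation, which is the crux.** This is where COMM does the work, and it is the step I expect to be the main obstacle. I would argue graphically, unwinding the definition of $P_{\lmb,\mu}$ from \lemref{lem:projection}: each projector is (up to the normalizing dimension factors) a closed-loop diagram that wraps an $\lmb^i$- or $\mu^i$-braiding around the strand carrying $f$, summed over simple objects $j \in J$ with the $d_j$ weighting indicated by the dashed-line convention. Writing out $P_{\lmb^1,\mu^1} \circ P_{\lmb^2,\mu^2}(f)$ produces a diagram in which the two dashed loops — one built from the $\lmb^1,\mu^1$ half-braidings, the other from $\lmb^2,\mu^2$ — are nested around the $f$-strand, and I must show I can slide the inner loop past the outer one to reverse their order. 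The key move is exactly the one licensed by COMM: at the crossing of the two loop-strands (labelled, say, $A$ for the first loop and $B$ for the second), COMM lets the two strands pass transversely through each other, trading $c_{A,B}$ for $c_{A,B}^{-1}$ as noted in \defref{def:ZZC} and interpreted pictorially in \rmkref{rmk:COMM_strands}.

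**Carrying out the slide.** Concretely, I would isotope the diagram so that the single transverse intersection of the meridian-type loop and the longitude-type loop is the only place the two projectors interact, apply COMM there to exchange the over/under-crossing, and then re-isotope to recognize the result as $P_{\lmb^2,\mu^2} \circ P_{\lmb^1,\mu^1}(f)$, with the $d_j$ weights and normalizations matching on both sides by construction. The anticipated difficulty is purely bookkeeping-topological: ensuring that the half-braiding naturality (the ``useful braiding'' identity displayed after \defref{def:ZZC} in the Drinfeld-center section) is used correctly to drag the strands into position before COMM is applied, and that no spurious crossings are introduced when the two loops are unnested. Since both $P_{\lmb^i,\mu^i}$ land in their respective $\Hom_\ZC$ subspaces, one could alternatively verify commutation by checking that $P_{\lmb^2,\mu^2}$ preserves $\im P_{\lmb^1,\mu^1}$ and vice versa — i.e.\ that applying one projector to a morphism already intertwining the other half-braiding leaves it intertwining that half-braiding — but this again reduces to the same single application of COMM, so the graphical slide is the cleanest route.
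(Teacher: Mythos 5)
Your proposal is correct and follows essentially the same route as the paper: the paper's proof is precisely the graphical computation you describe, writing $P_{\lmb^1,\mu^1}\circ P_{\lmb^2,\mu^2}(f)$ as nested dashed loops and using COMM (together with naturality of the half-braidings) to slide one loop past the other, yielding $P_{\lmb^2,\mu^2}\circ P_{\lmb^1,\mu^1}(f)$. The linear-algebra step identifying $\im(PQ)$ with $\im P\cap\im Q$ for commuting idempotents, which you spell out, is left implicit in the paper.
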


\begin{proof}
\[
  \includegraphics[width=0.9\textwidth]{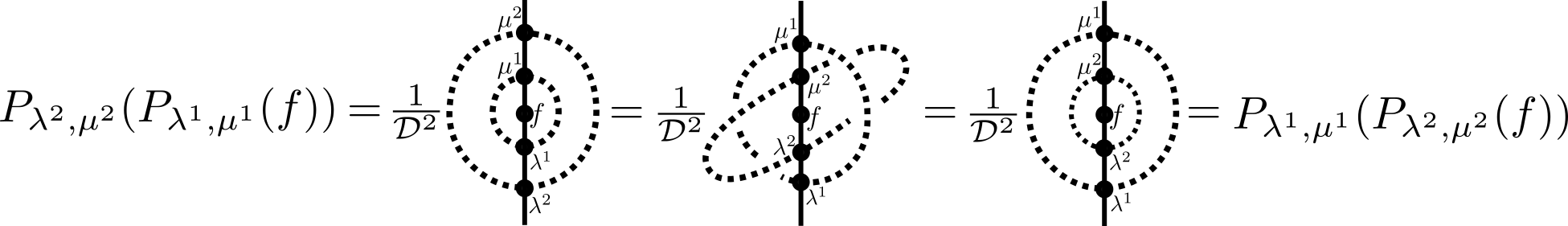}
\]
The second equality uses COMM.
\end{proof}

\begin{proposition}
  Let $\cC$ be a premodualr category.
  Then $\ZZC$ is abelian and semisimple,
  and has finitely many simple objects.
\end{proposition}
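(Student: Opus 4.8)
The strategy is to run the entire argument in parallel with the proof sketch for Proposition~\ref{prp:ZC_modular}, exploiting the fact that an object of $\ZZC$ is just an object of $\ZC$ carrying a \emph{second} compatible half-braiding. The key observation I would establish first is that all the structure maps used for $\ZC$ are manufactured from universal properties in $\cC$ and therefore automatically respect \emph{any} half-braiding one carries along. So the plan is to show that the two half-braidings can be dragged through the abelian-category constructions simultaneously, and that the COMM relation survives.

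\emph{Abelian.} First I would show that $\ZZC$ is additive (clear, since $\Hom$-spaces are intersections of subspaces and biproducts inherit both half-braidings componentwise). For kernels of $f:(X,\lmb^1,\lmb^2)\to(Y,\mu^1,\mu^2)$, I would take $K=\ker_\cC f$ and equip it with both inherited half-braidings $\lmb^1|_K$ and $\lmb^2|_K$, each constructed exactly as in the Drinfeld-center sketch via the universal property of kernels. The crucial extra step is verifying that the pair $(\lmb^1|_K,\lmb^2|_K)$ still satisfies COMM. Here I would argue that both sides of the COMM equation, restricted to $K$, are morphisms into $K\tnsr A\tnsr B$ (resp.\ the relevant reassociation) that agree after post-composing with the monomorphism $\iota\tnsr\id$; since $\iota\tnsr\id_{A\tnsr B}$ is monic (tensoring preserves monos in a fusion category), the two sides must already be equal on $K$. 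Cokernels and images follow the same pattern with the dual universal property, so $(K,\lmb^1|_K,\lmb^2|_K)$ is an object of $\ZZC$ satisfying the kernel universal property by the same monicity argument used for $\ZC$. This gives all kernels and cokernels and the usual image identifications.

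\emph{Semisimplicity.} I would use \lemref{lem:projection_elliptic}: since the two projectors $P_{\lmb^1,\mu^1}$ and $P_{\lmb^2,\mu^2}$ commute, their composite $P:=P_{\lmb^1,\mu^1}\circ P_{\lmb^2,\mu^2}$ is itself a projector onto $\Hom_\ZZC$. Given a subobject $(K,\lmb^1|_K,\lmb^2|_K)\hookrightarrow(X,\lmb^1,\lmb^2)$, by semisimplicity of $\cC$ choose a $\cC$-splitting $p$ with $p\circ\iota=\id_K$, and apply the combined projector to get $\bar p:=P(p)$. The same computation as in the $\ZC$ sketch—now using that $\iota$ intertwines \emph{both} half-braidings, so $P(p)\circ\iota=\id_K$—shows $(\iota,\bar p)$ is an inclusion–projection pair, exhibiting every subobject as a direct summand. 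Hence $\ZZC$ is semisimple.

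\emph{Finitely many simples.} For finiteness I would mirror the argument via \prpref{prp:direct_summand}: once the elliptic induction functor is in place (it is developed later in the paper, but the analog of \prpref{prp:direct_summand} should give that every $(Y,\lmb^1,\lmb^2)$ is a direct summand of an elliptic induction $\cI^{\textrm{el}}Y$), a simple object embeds into some $\cI^{\textrm{el}}X_j$, whose endomorphism algebra is finite-dimensional, bounding the number of simple summands; finitely many $X_j$ then yields finiteness overall. The main obstacle I anticipate is the COMM-compatibility check for the inherited half-braidings on kernels/cokernels: one must be careful that the \emph{reverse} braiding $c^\inv$ appearing on the right-hand side of COMM is transported correctly through the universal-property diagram, but the monicity-of-$\iota\tnsr\id$ trick reduces this to an equality that already holds on $X$, so no genuinely new computation is required beyond what the $\ZC$ case demands.
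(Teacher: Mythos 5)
Your proposal is correct and follows essentially the same route as the paper: kernels/cokernels inherit both half-braidings via universal properties, with COMM verified by mapping the equation on $K$ monically into the corresponding equation on $X$ (the paper phrases this as an outer octagon commuting because the inner one does); semisimplicity via the composite projector $P_{\lmb^2,\mu^2}\circ P_{\lmb^1,\mu^1}$ applied to a $\cC$-splitting; and finiteness via \prpref{prp:elliptic_direct_summand} exactly as you anticipate. No substantive differences.
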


\begin{proof}

The proof is similar to that of \prpref{prp:ZC_modular},
and we will use some notation established there.\\

$\ZZC$ is clearly additive.
The proof that $\ZZC$ is \textbf{abelian}
is pretty much the same as for $\ZC$.
For example, to get the kernel of a morphism
$f: (X, \lmb^1, \lmb^2) \to (Y, \mu^1, \mu^2)$,
first find the kernel of $f$ as a morphism in $\cC$,
say $\iota: K \hookrightarrow X$.
$K$ inherits two half-braidings $\lmb^1|_K,\lmb^2|_K$
from $X$ by restricting along $\iota$
(see proof sketch of \prpref{prp:ZC_modular}).
These two half-braidings satisfy COMM
naturally from the universal property of kernels:
\[
\begin{tikzcd}
  KBA \ar[r, hookrightarrow] \ar[rrr, equal, bend left]
    & XBA \ar[r, equal]
    & XBA \ar[r, hookleftarrow]
    & KBA
  \\
  KAB \ar[u] \ar[r, hookrightarrow]
    & XAB \ar[u, "\id_X \tnsr c_{A,B}"]
    & BXA \ar[u, "\lmb_B^2 \tnsr \id_A"'] \ar[r, hookleftarrow]
    & BKA \ar[u]
  \\
  AKB \ar[u] \ar[r, hookrightarrow]
    & AXB \ar[u, "\lmb_A^1 \tnsr \id_B"]
    & BAX \ar[u, "\id_B \tnsr \lmb_A^1"'] \ar[r, hookleftarrow]
    & BAK \ar[u]
  \\
  ABK \ar[u] \ar[r, hookrightarrow] \ar[rrr, equal, bend right]
    & ABX \ar[u, "\id_A \tnsr \lmb_B^2"] \ar[r, equal]
    & ABX \ar[u, "c_{B,A}^\inv \tnsr \id_X"'] \ar[r, hookleftarrow]
    & ABK \ar[u]
\end{tikzcd}
\]
(The inner octagon commutes because
$\lmb^1,\lmb^2$ satisfy COMM by definition,
so the outer octagon commutes,
hence $\lmb^1|_K, \lmb^2|_K$ satisfy COMM.
The equal signs should really be some associativity constraints,
but we suppress them.)\\

So $\ker_\ZZC(f) = (K, \lmb^1|_K, \lmb^2|_K)$.
Likewise, $\coker_\ZZC(f) = (C, \overline{\mu^1}^C, \overline{\mu^2}^C)$,
and $\im (f) = (I, \overline{\lmb^1}^I = \mu^1|_I,
                   \overline{\lmb^2}^I = \mu^2|_I)$,
where $C,I$ are respectively the cokernel, image in $\cC$.\\

\textbf{Semisimplicity} of $\ZZC$ follows from the same argument for $\ZC$
almost verbatim - we again start with
$(K,\lmb^1|_K,\lmb^2|_K) \overset{\iota}{\hookrightarrow} (X,\lmb^1,\lmb^2)$,
noting that abelian-ness of $\ZZC$
implies that any subobject of $(X,\lmb^1,\lmb^2)$ is of this form.
In particular, $K \overset{\iota}{\hookrightarrow} X$ is a subobject in $\cC$,
so by semisimplicity of $\cC$, there is a $\cC$-morphism $X \to K$
such that $p \circ \iota = \id_K$.
Consider $\bar{p} = P_{\lmb^2, \lmb^2|_K}P_{\lmb^1, \lmb^1|_K} (p)$.
Since $\iota$ is a $\ZZC$-morphism, we have
$P_{\lmb^2, \lmb^2|_K}P_{\lmb^1, \lmb^1|_K} (p) \circ \iota
= P_{\lmb^2, \lmb^2|_K}P_{\lmb^1, \lmb^1|_K} (p \circ \iota)
= P_{\lmb^2, \lmb^2|_K}P_{\lmb^1, \lmb^1|_K} (\id_K)
= \id_K$,
thus $\iota, \bar{p}$ are an inclusion-projection pair
that exhibites $(K, \lmb^1|_K, \lmb^2|_K)$ as a direct summand of
$(X,\lmb^1,\lmb^2)$.\\

$\ZZC$ has \textbf{finitely many simple objects}:
similarly to the proof of \prpref{prp:ZC_modular},
this is a consequence of \prpref{prp:elliptic_direct_summand},
which asserts that $(Y,\mu^1,\mu^2)$ is a direct summand of
$\cII Y$. Then if $(X,\lmb^1,\lmb^2)$ is simple
with $X = \dirsum_j X_j^{\oplus n_j}$,
then $(X,\lmb^1,\lmb^2) \subseteq \dirsum_j (\cII X_j)^{\oplus n_j}$,
hence it must be a subobject of some $\cII X_j$.
Since $\End_{\ZZC} (\cII X_j)$ is finite dimensional,
there can only be finitely many simple subobjects
of $\cI X_j$.
Finally, there are only finitely many simples $X_j$ in $\cC$.\\

\end{proof}

The following is the analog of \prpref{prp:adjoint}:

\begin{proposition}
\label{prp:elliptic_adjoint}
The forgetful functor $\cFF : \ZZC \to \cC$ has a two-sided
adjoint $\cII : \cC \to \ZZC$,
where on objects, $\cII$ sends
\[
  X \mapsto (\dirsum_{i,j} X_iX_jXX_j^*X_i^*, \Gamma^1, \Gamma^2)
\]
where
\begin{align*}
  \includegraphics[height=40pt]{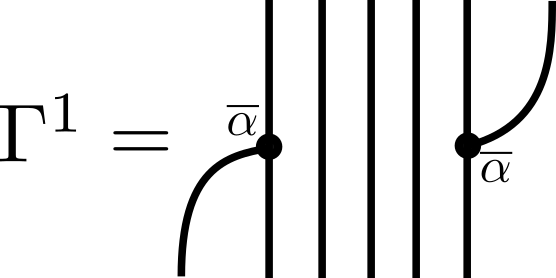},
  \hspace{10pt}
  \includegraphics[height=40pt]{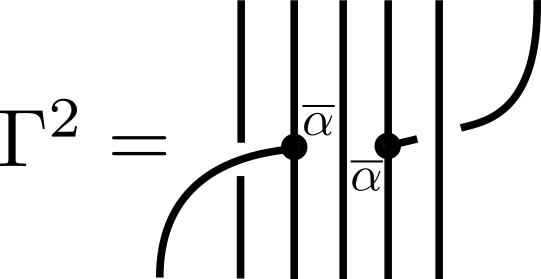},
\end{align*}
(where $\overline{\alpha}$ is defined in the appendix)
and on morphisms, $f\in \Hom_\cC(X,Y)$,
\[
  \cII(f) = \dirsum_{i,j} \id_{X_i X_j} \tnsr f \tnsr \id_{X_j^* X_i^*}
\]

The adjunction is given by the functorial isomorphisms
\begin{align*}
  \Hom_\cC(X,Y) \cong \Hom_\ZZC(\cII X, (Y,\mu^1,\mu^2))
  \hspace{10pt} & \hspace{10pt}
  \Hom_\cC(X,Y) \cong \Hom_\ZZC((X,\lmb^1,\lmb^2), \cII Y) \\
  \includegraphics[height=150pt]{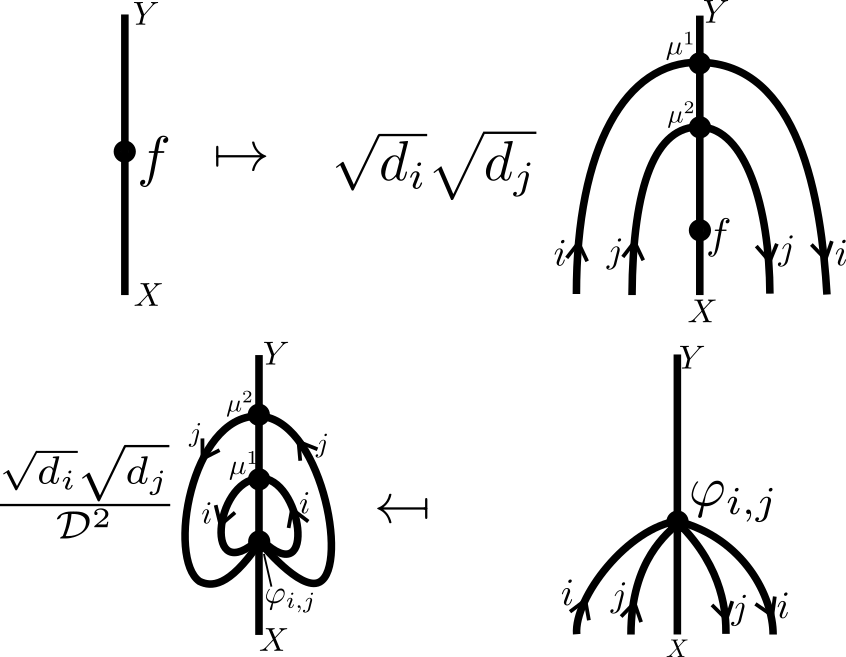}
  \hspace{10pt} & \hspace{10pt}
  \includegraphics[height=150pt]{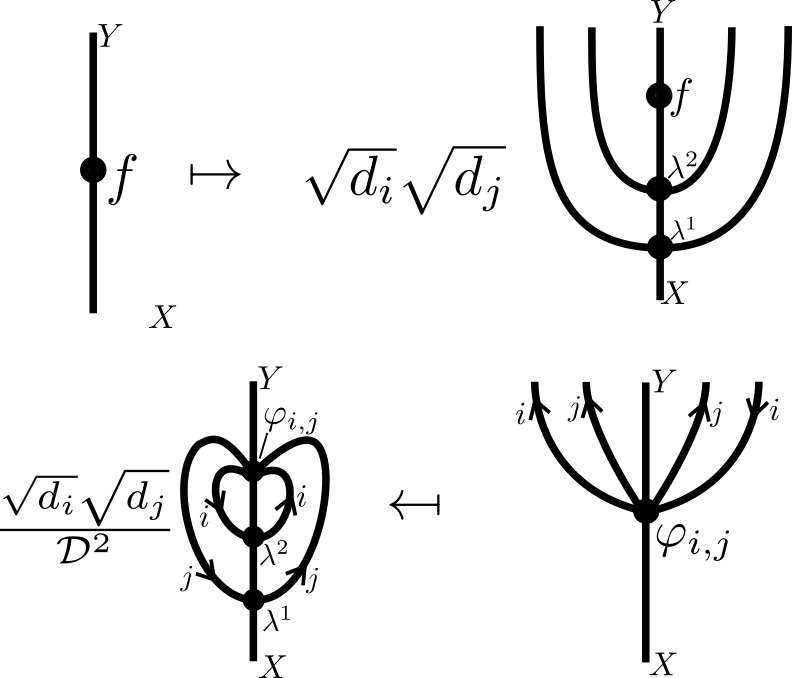}
\end{align*}

\end{proposition}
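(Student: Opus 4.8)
The plan is to mirror the proof of \prpref{prp:adjoint}, establishing the claim in three stages: first that $\cII X$ is a genuine object of $\ZZC$, then that $\cII$ is a functor, and finally that the pictured maps are mutually inverse natural isomorphisms. A useful organizing observation is that, at the level of underlying objects of $\cC$, $\cII X = \dirsum_{i,j}\iinduct{X}$ is exactly $\cI$ applied to the underlying object of $\cI X$; thus the outer $X_i$-struts and the inner $X_j$-struts each carry an induction half-braiding of the type appearing in \prpref{prp:adjoint}, and I expect $\Gamma^1$ to be the outer one and $\Gamma^2$ the inner one.

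First I would check that $\cII X \in \Obj\ZZC$. That $\Gamma^1$ and $\Gamma^2$ are each half-braidings follows from the half-braiding lemma used in \prpref{prp:adjoint} (\lemref{lem:Gamma_hf_brd}), applied to the outer and inner struts respectively. The real content here is COMM: one must verify the compatibility relation of \defref{def:ZZC} for the pair $(\Gamma^1,\Gamma^2)$. This is a diagrammatic computation in which a strand $A$ braided around the outer struts and a strand $B$ braided around the inner struts are slid past each other; the transversality picture of \rmkref{rmk:COMM_strands} is exactly the geometric content, and the single crossing between the two families of struts produces the universal braiding $c_{A,B}$ on one side and its inverse $c_{B,A}^\inv$ on the other. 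Functoriality of $\cII$ on morphisms is then immediate, as in \prpref{prp:adjoint}.

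The heart of the proof is the pair of adjunction isomorphisms. For the left adjunction $\Hom_\cC(X,Y)\cong\Hom_\ZZC(\cII X,(Y,\mu^1,\mu^2))$, the forward map sends $f$ to the morphism obtained by capping off the outer and inner struts of $\cII X$ using the two half-braidings $\mu^1,\mu^2$ of the target (the left figure), while the backward map restricts a $\ZZC$-morphism to the $i=j=0$ summand $X_0X_0XX_0^*X_0^* = X$. The forward map is, up to the $\overline\alpha$ normalizations, the composite of two ordinary induction counits, so it intertwines $\mu^1$ and $\mu^2$ one at a time by the computation in \prpref{prp:adjoint}; what makes the composite land in $\Hom_\ZZC$ rather than in each $\Hom_\ZC$ separately is precisely that $(\mu^1,\mu^2)$ and $(\Gamma^1,\Gamma^2)$ both satisfy COMM, which I would use to commute the two cappings past each other. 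That the forward-then-backward composite is the identity is immediate, since restricting a capped diagram to the $\one$-struts trivializes them and returns $f$.

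The step I expect to be the main obstacle is showing backward-then-forward is the identity, i.e.\ that a $\ZZC$-morphism $g:\cII X\to Y$ is recovered by capping its $(i,j)=(0,0)$ component. This is the elliptic analog of Corollary~\ref{cor:hom_IX_IY}: intertwining a single half-braiding already forces the struts of an induced object to be labelled coherently, and here one must show that intertwining \emph{both} $\Gamma^1$ and $\Gamma^2$ rigidly determines $g$ from its innermost piece, with the outer and inner strut-labels simultaneously constrained. Carrying this out diagrammatically, using COMM to reconcile the two strut-constraints, is the crux; the right-hand adjunction $\Hom_\cC(X,Y)\cong\Hom_\ZZC((X,\lmb^1,\lmb^2),\cII Y)$ is entirely parallel, with units in place of counits, and naturality in both variables is routine. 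A conceptually cleaner alternative, which I would record as a remark, is to factor $\cII = \cI_1\circ\cI$ through the ordinary center, where $\cI:\cC\to\ZC$ is the two-sided adjoint of \prpref{prp:adjoint} and $\cI_1:\ZC\to\ZZC$ is the intermediate induction of \prpref{prp:intermediate_adjoint_1}; since $\cFF$ likewise factors as a composite of forgetful functors, $\cII$ is then a two-sided adjoint as a composite of two-sided adjoints, reducing the whole statement to the intermediate case.
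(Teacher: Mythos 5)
Your proposal is correct and takes essentially the same approach as the paper, whose own proof is only a brief sketch: it cites \lemref{lem:Gamma_hf_brd} for $\Gamma^1,\Gamma^2$ being half-braidings, says that COMM and the intertwining properties of $\cII(f)$ are ``easily checked,'' and asserts that the pictured maps between $\Hom$ spaces are inverses. Your expanded account --- in particular that COMM for $(\mu^1,\mu^2)$ is exactly what makes the capping map land in $\Hom_\ZZC$ rather than in each $\Hom_\ZC$ separately, that the nontrivial direction is recovering a $\ZZC$-morphism from its $(0,0)$-component, and that one could alternatively factor $\cII=\cI_1\circ\cI$ and compose two-sided adjoints (consistent with \rmkref{rmk:Gamma_1}, \rmkref{rmk:Gamma_2} and \prpref{prp:intermediate_adjoint_1}, though the paper proves the intermediate adjunction ``similarly'' to this one rather than the reverse) --- fills in precisely what the paper leaves implicit.
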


\begin{remark}
\label{rmk:Gamma_1}
Note that $\Gamma^1$ can also be described as 
the half-braiding given by induction
$\cI: \cC \to \ZC$,
that is,
$(\dirsum_{i,j} X_iX_jXX_j^*X_i^*, \Gamma^1)
\cong \cI(\dirsum_j X_jXX_j^*)$ in $\ZC$.
There is a similar description for $\Gamma^2$
that we will elaborate on later.
See \prpref{prp:intermediate_adjoint_1},
\rmkref{rmk:Gamma_2}.
\end{remark}

\begin{proof}
By \lemref{lem:Gamma_hf_brd}, $\Gamma^1,\Gamma^2$ are half-braidings.
To see that $\cII$ is a well-defined functor,
one easily checks that the two half-braidings $\Gamma^1,\Gamma^2$
satisfy COMM (but see \rmkref{rmk:reason_Gamma_COMM}),
and that on morphisms,
$\cII(f)$ intertwines both $\Gamma^1$ and $\Gamma^2$,
and $\cII$ respects identity and composition.
The maps between the $\Hom$ spaces are also easily checked
to be inverses.
\end{proof}

\begin{remark}
\label{rmk:reason_Gamma_COMM}
It is trivial to check that $\Gamma^1, \Gamma^2$ satisfy COMM,
but more illuminating is the ``reason" that they do.
$\cII X = (\dirsum_{i,j} X_iX_jXX_j^*X_i^*, \Gamma^1, \Gamma^2)$
is the prototype of an object in $\ZZC$.
Indeed, in the skein-theoretic approach to the extended Crane-Yetter TQFT
that we are studying,
such objects $\cII X$ arise naturally in the category
$Z_{\textrm{CY}}(\punctorus)$ as the configuration
consisting of one marked point in $\punctorus$
labelled with $X \in \cC$.
The other objects are obtained by taking all images of idempotents,
i.e. we consider the Karoubian closure.
In the current context, this takes the form of
\prpref{prp:elliptic_direct_summand}.
In other words,
the definition of compatible half-braidings was cooked up 
to capture the essential features of this prototypical object.
\hfill $\triangle$
\end{remark}

The following is the analog of \prpref{prp:direct_summand}:

\begin{proposition}
\label{prp:elliptic_direct_summand}
$(Y, \mu^1, \mu^2) \in \ZZC$ is a direct summand of $\cII Y$.
In particular, $\ZZC$ is the
Karoubian completion of the full image of $\cII$.
\end{proposition}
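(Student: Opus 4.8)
The plan is to mimic the proof of \prpref{prp:direct_summand} for the Drinfeld center, now keeping track of \emph{two} half-braidings at once. Recall that for $\ZC$, one exhibited $(Y,\mu)$ as a direct summand of $\cI Y$ by writing down an explicit inclusion $(Y,\mu) \to \cI Y$ and projection $\cI Y \to (Y,\mu)$ built from the (co)evaluation maps and $\mu$, and checking that these are genuine $\ZC$-morphisms and that their composite is $\id_{(Y,\mu)}$. Here $\cII Y$ has underlying object $\dirsum_{i,j} X_iX_jYX_j^*X_i^*$, which is exactly the $(i,j)$-summand analog of the single dashed-loop picture for $\cI$. So first I would write down the analogous inclusion $\eta : (Y,\mu^1,\mu^2) \to \cII Y$ and projection $\veps : \cII Y \to (Y,\mu^1,\mu^2)$, capping off the two nested pairs of strands $X_i,X_j$ with (co)evaluations, with the appropriate $\sqrt{d_i}\sqrt{d_j}$ normalizations hidden in $\overline\alpha$ as in \prpref{prp:elliptic_adjoint}.

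Next I would verify that $\eta$ and $\veps$ are morphisms in $\ZZC$, not merely in $\cC$. By \defref{def:ZZC}, this amounts to checking that each intertwines \emph{both} half-braidings: that $\eta$ (resp. $\veps$) intertwines $\mu^1$ with $\Gamma^1$ and $\mu^2$ with $\Gamma^2$. This is where the two-sided adjunction of \prpref{prp:elliptic_adjoint} does the work: the maps $\eta,\veps$ are precisely the unit and counit of the adjunction $\cII \dashv \cFF$ (equivalently $\cFF \dashv \cII$) evaluated appropriately, or are obtained from $\id_Y \in \Hom_\cC(Y,Y)$ under the adjunction isomorphisms displayed in \prpref{prp:elliptic_adjoint}. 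Since those isomorphisms land in $\Hom_\ZZC$, the resulting maps are automatically $\ZZC$-morphisms, so the intertwining of both half-braidings is free. Alternatively, one checks the intertwining for $\Gamma^1$ and $\Gamma^2$ separately, each reducing to the single-half-braiding computation already carried out in \prpref{prp:direct_summand} (using \rmkref{rmk:Gamma_1}, which identifies $\Gamma^1$ with an ordinary induced half-braiding, and the analogous description of $\Gamma^2$).

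Finally I would show $\veps \circ \eta = \id_{(Y,\mu^1,\mu^2)}$. This is the string-diagram computation: composing the inclusion and projection, the two nested $X_i,X_j$ loops get contracted, and by \lemref{lem:identity_combine} (the same lemma invoked at the end of \prpref{prp:direct_summand}) together with the dashed-line normalization summing $d_j$ over $J$, the paired (co)evaluations collapse to the identity on $Y$. Because $\veps\eta = \id$, the composite $\eta\veps$ is an idempotent on $\cII Y$ with image isomorphic to $(Y,\mu^1,\mu^2)$, establishing it as a direct summand. The ``in particular'' clause then follows formally: every object of $\ZZC$ is a direct summand of some $\cII Y$, and the category is already known to be idempotent-complete (being abelian semisimple, by the preceding proposition), so $\ZZC$ is the Karoubian completion of the full image of $\cII$.

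The main obstacle is bookkeeping rather than conceptual: I expect the only real subtlety is confirming that the \emph{same} inclusion-projection pair simultaneously respects $\Gamma^1$ and $\Gamma^2$, i.e. that one does not need two different idempotents for the two half-braidings. Invoking the adjunction of \prpref{prp:elliptic_adjoint} directly (so that $\eta,\veps$ are manifestly $\ZZC$-morphisms) sidesteps this cleanly, which is why I would route the argument through the adjunction rather than checking the two intertwining conditions by separate diagram chases.
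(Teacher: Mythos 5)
Your proposal is correct and follows essentially the same route as the paper: the paper likewise exhibits the explicit idempotent on $\cII Y$ built from $\mu^1,\mu^2$ and the (co)evaluations, in direct analogy with the proof of \prpref{prp:direct_summand}, and concludes the Karoubian-completion statement exactly as you do. Your observation that one can route through the adjunction of \prpref{prp:elliptic_adjoint} to get the $\ZZC$-morphism property for free is not really a different argument, since the adjunction isomorphisms there are given by precisely the string diagrams in question; it is a clean way to organize the same verification.
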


\begin{proof}
  The morphism below projects $\cII Y$ onto $(Y,\mu^1,\mu^2)$
(see proof of \prpref{prp:direct_summand}).
\[
  \includegraphics[height=80pt]{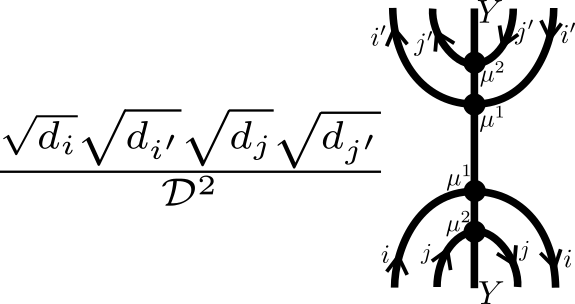}
\]
\end{proof}

So far we have only discussed the relation between $\ZZC$ and $\cC$.
As \rmkref{rmk:Gamma_1} suggests, $\cII$ actually factors through
$\ZC$. Indeed,
observe that $\cFF$ factors through an intermediate forgetful functor
$\cF_1 : \ZZC \to \ZC$, which forgets only the first braiding,
$(X,\lmb^1, \lmb^2) \mapsto (X,\lmb^2)$,
so that $\cFF = \cF \circ \cF_1$.

\begin{proposition}
\label{prp:intermediate_adjoint_1}
The intermediate forgetful functor $\cF_1: \ZZC \to \ZC$ 
which forgets the first braiding,
$\cF_1 : (X, \lmb^1, \lmb^2) \mapsto (X, \lmb^2)$,
has a two-sided adjoint $\cI_1: \ZC \to \ZZC$,
such that $\cII$ factors through it,
i.e. $\cII = \cI_1 \circ \cI$.\\

On objects, $\cI_1$ sends
\[
  (X,\lmb) \mapsto (X_iXX_i^*, \Gamma, \wdtld{\lmb})
\]
where $\Gamma$ was defined in \prpref{prp:adjoint} and
\[
  \includegraphics[height=40pt]{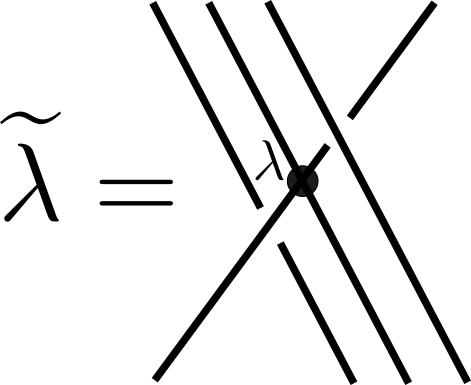}
\]
and on morphisms, $f\in \Hom_\ZC((X,\lmb),(Y,\mu))$,
\[
  \cI_1(f) = \id_{X_i}\tnsr f \tnsr \id_{X_i^*}
\]
which clearly intertwines $\wdtld{\lmb}$ and $\wdtld{\mu}$.\\

The adjunction
\begin{align*}
  \Hom_\ZC((X,\lmb), (Y,\mu^2)) \cong
    \Hom_\ZZC(\cI_1 (X,\lmb), (Y,\mu^1,\mu^2)) \\
  \Hom_\ZC((X,\lmb^2), (Y,\mu)) \cong
    \Hom_\ZZC((X,\lmb^1,\lmb^2), \cI_1 (Y,\mu))
\end{align*}
is given by the same maps as in
\prpref{prp:adjoint},
just with $\lmb$'s and $\mu$'s replaced by $\lmb^1$ and $\mu^1$'s,
respectively.

\end{proposition}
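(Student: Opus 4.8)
The plan is to establish three things in turn: that $\cI_1$ is a well-defined functor landing in $\ZZC$, that it factors the elliptic induction as $\cII = \cI_1 \circ \cI$, and that the two displayed adjunction bijections hold. The guiding observation is that the first output half-braiding $\Gamma$ is \emph{exactly} the half-braiding produced by the ordinary induction $\cI$ of \prpref{prp:adjoint}, applied to the underlying object $X_iXX_i^*$; hence every statement that concerns only the first slot is already supplied by \prpref{prp:adjoint}, and the genuinely new content is confined to the second slot $\wdtld{\lmb}$ and its interaction with $\Gamma$.

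First I would check well-definedness on objects. That $\wdtld{\lmb}$ is a half-braiding is routine: it is built by sliding a test object past the struts $X_i, X_i^*$ using the ambient braiding and past the central $X$ using $\lmb$, so its compatibility with tensor products follows from the corresponding property of $\lmb$ together with the hexagon for $c$ (this is the content packaged in \lemref{lem:Gamma_hf_brd}). On morphisms, $\cI_1(f) = \id_{X_i} \tnsr f \tnsr \id_{X_i^*}$ intertwines $\Gamma$ with $\Gamma$ by \prpref{prp:adjoint} and intertwines $\wdtld{\lmb}$ with $\wdtld{\mu}$ because the struts carry identities, which commute with $c$ by naturality, while $f$ intertwines $\lmb,\mu$ on the central strand; that $\cI_1$ preserves identities and composition is immediate. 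The one substantive point is that the pair $(\Gamma, \wdtld{\lmb})$ satisfies \textbf{COMM}, and I expect this to be the main obstacle. I would verify it by a direct string-diagram computation parallel to the one behind \rmkref{rmk:reason_Gamma_COMM}, using the half-braiding axiom for $\lmb$ together with the naturality and hexagon relations for $c$; the transverse-crossing interpretation of \textbf{COMM} from \rmkref{rmk:COMM_strands} makes the required isotopy transparent, both sides reducing to the same diagram.

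Next I would verify the factorization $\cII = \cI_1 \circ \cI$. The underlying objects agree on the nose, since $\dirsum_i X_i\,(\dirsum_j X_jXX_j^*)\,X_i^* = \dirsum_{i,j} X_iX_jXX_j^*X_i^*$. For the first half-braiding, \rmkref{rmk:Gamma_1} identifies $\Gamma^1$ of $\cII X$ with the induced half-braiding of $\cI$ on $\dirsum_j X_jXX_j^*$, which is precisely the $\Gamma$ that $\cI_1$ attaches to $\cI X$. For the second half-braiding, applying the $\wdtld{(\cdot)}$ construction to the half-braiding of $\cI X$ yields $\Gamma^2$, as anticipated in \rmkref{rmk:Gamma_2}. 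Thus the factorization reduces to unwinding the two constructions and matching diagrams, with no new estimates.

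Finally, for the adjunction I would take the same unit/counit maps as in \prpref{prp:adjoint}, with $\lmb,\mu$ replaced by $\lmb^1,\mu^1$; these are already bijections $\Hom_\cC \leftrightarrow \Hom_\ZC$ for the first slot, and naturality in both variables is inherited verbatim. The remaining task is to show they restrict to the claimed bijections, i.e.\ that under the \prpref{prp:adjoint} isomorphism the condition of intertwining $\wdtld{\lmb}$ with $\mu^2$ on the $\ZZC$ side corresponds exactly to intertwining $\lmb$ with $\mu^2$ on the $\ZC$ side. This is a second diagrammatic check of the same flavor as \textbf{COMM}: the adjunction maps only insert or contract the struts, using $\mu^1$ and the (co)evaluations, and these struts meet the second half-braiding only through the ambient braiding, so using \textbf{COMM} on the target $(Y,\mu^1,\mu^2)$ together with naturality one trades the $\wdtld{\lmb}$-intertwining for the bare $\lmb$-intertwining. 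The second adjunction is handled symmetrically. The hardest part throughout is the \textbf{COMM}-type bookkeeping; everything else runs parallel to the Drinfeld-center case.
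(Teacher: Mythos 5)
Your proposal is correct and takes essentially the same route as the paper, whose entire proof is a deferral (``Similar to \prpref{prp:elliptic_adjoint}''), i.e.\ precisely the steps you spell out: verify the half-braiding and COMM conditions diagrammatically, check functoriality and that the adjunction maps of \prpref{prp:adjoint} restrict to mutually inverse bijections on the relevant subspaces, with the factorization $\cII = \cI_1 \circ \cI$ handled exactly as in \rmkref{rmk:Gamma_1} and \rmkref{rmk:Gamma_2}. One minor slip: \lemref{lem:Gamma_hf_brd} establishes that $\Gamma$ is a half-braiding, not $\wdtld{\lmb}$; the latter is a separate (equally routine) check from the hexagon for $c$ and the half-braiding axiom for $\lmb$, as you in fact describe.
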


\begin{remark}
\label{rmk:Gamma_2}
In \rmkref{rmk:Gamma_1}, we noted that $\Gamma^1$
can be described as the half-braiding given by $\cI$.
Now, we can describe $\Gamma^2$, the second braiding
of $\cII X$, as $\wdtld{\Gamma}$,
where recall $\Gamma$ is the braiding 
$(X_iXX_i^*, \Gamma) = \cI X$.
Indeed, this had better be the case since
$\cII X = (\iinduct{X}, \Gamma^1, \Gamma^2)$
while $\cI_1(\cI X) = \cI_1(\induct{X}, \Gamma)
= (\iinduct{X}, \Gamma^1, \wdtld{\Gamma})$,
and the proposition claims they are the same.
\end{remark}

\begin{proof}
Similar to \prpref{prp:elliptic_adjoint}.
\end{proof}

The forgetful functor $\cF_1: \ZZC \to \ZC$ was defined to be
forgetting the first half-braiding. There is nothing special about
the first braiding compared to the second
(see \rmkref{rmk:first_second_hfbrd}).
We define $\cF_2: \ZZC \to \ZC$ to be the forgetful functor
forgetting the second half-braiding. One then also has
a two-sided adjoint functor $\cI_2$:\\

\begin{proposition}
\label{prp:intermediate_adjoint_2}
The intermediate forgetful functor $\cF_2: \ZZC \to \ZC$
which forgets the second braiding,
$\cF_2 : (X, \lmb^1, \lmb^2) \mapsto (X, \lmb^1)$,
has a two-sided adjoint $\cI_2: \ZC \to \ZZC$,
such that $\cII = \cI_2 \circ \cI$.\\

On objects, $\cI_2$ sends
\[
  (X,\lmb) \mapsto (X_iXX_i^*, \underset{\sim}{\lmb}, \Gamma)
\]
where
\[
  \includegraphics[height=40pt]{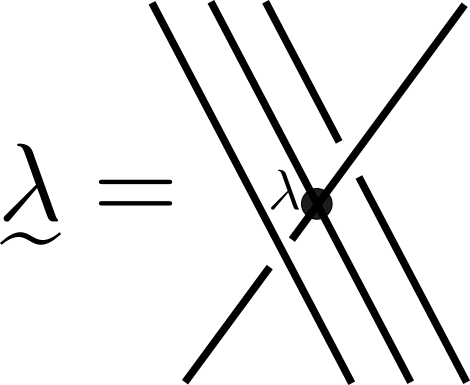}
\]
(note the different braidings used compared to $\wdtld{\lmb}$ in
\prpref{prp:intermediate_adjoint_1})\\
and on morphisms, $f\in \Hom_\ZC((X,\lmb),(Y,\mu))$,
\[
  \cI_2(f) = \id_{X_i}\tnsr f \tnsr \id_{X_i^*}
\]

The adjunction
\begin{align*}
  \Hom_\ZC((X,\lmb), (Y,\mu^1)) \cong
    \Hom_\ZZC(\cI_2 (X,\lmb), (Y,\mu^1,\mu^2)) \\
  \Hom_\ZC((X,\lmb^1), (Y,\mu)) \cong
    \Hom_\ZZC((X,\lmb^1,\lmb^2), \cI_2 (Y,\mu))
\end{align*}
is given by the same maps as in
\prpref{prp:adjoint},
just with $\lmb$'s and $\mu$'s replaced by $\lmb^2$ and $\mu^2$'s,
respectively.

\end{proposition}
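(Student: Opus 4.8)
The plan is to obtain everything by transporting \prpref{prp:intermediate_adjoint_1} across the braiding-reversal symmetry of the elliptic center, rather than redoing the string-diagram computations of \prpref{prp:elliptic_adjoint}. The starting observation is that \textbf{COMM} is an equation whose left-hand side carries $c_{-,-}$ while its right-hand side carries $c_{-,-}^\inv$; consequently, reversing the braiding of $\cC$ interchanges the two sides of \textbf{COMM}. Thus the assignment $(X,\lmb^1,\lmb^2) \mapsto (X,\lmb^2,\lmb^1)$ defines a (strict) isomorphism of categories $S : \ZZC \xrightarrow{\sim} \cZ^{\textrm{el}}(\cC^\bop)$. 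Since the Drinfeld center and the induction functor $\cI$ depend only on the monoidal structure of $\cC$, we have $\cZ(\cC^\bop) = \ZC$, and the induction functors for $\cC$ and $\cC^\bop$ literally coincide; in particular $\Gamma$ is the same half-braiding for both. By construction $S$ intertwines the two forgetful functors, $\cF_2 = \cF_1^{\cC^\bop} \circ S$, where $\cF_1^{\cC^\bop}$ forgets the first half-braiding of a $\cC^\bop$-object.

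Granting this, the proposition falls out formally. The functor $\cI_1^{\cC^\bop}$ exists and is a two-sided adjoint of $\cF_1^{\cC^\bop}$ by \prpref{prp:intermediate_adjoint_1} applied to $\cC^\bop$, so $\cI_2 := S^\inv \circ \cI_1^{\cC^\bop}$ is a two-sided adjoint of $\cF_2$, and its adjunction isomorphisms are those of \prpref{prp:intermediate_adjoint_1} read through $S$. Tracing the object formula, $\cI_1^{\cC^\bop}(X,\lmb) = (X_iXX_i^*, \Gamma, \wdtld{\lmb})$ with the tilde computed using the braiding of $\cC^\bop$; applying $S^\inv$ swaps the two slots, giving $\cI_2(X,\lmb) = (X_iXX_i^*, \underset{\sim}{\lmb}, \Gamma)$, where $\underset{\sim}{\lmb}$ is precisely $\wdtld{\lmb}$ with $c_{-,-}$ replaced by $c_{-,-}^\inv$. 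This is exactly the ``different braidings'' noted in the statement, and the well-definedness of $\cI_2$ on objects, i.e. that $(\underset{\sim}{\lmb}, \Gamma)$ satisfies \textbf{COMM}, is automatic because $(\Gamma, \wdtld{\lmb})$ satisfies \textbf{COMM} in $\cC^\bop$ and $S^\inv$ is an isomorphism of categories. The morphism formula $\cI_2(f) = \id_{X_i} \tnsr f \tnsr \id_{X_i^*}$ and functoriality transport likewise.

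For the factorization $\cII = \cI_2 \circ \cI$, I would avoid matching half-braidings by hand and instead invoke uniqueness of adjoints: since $\cFF = \cF \circ \cF_2$ and two-sided adjoints compose to a two-sided adjoint of the composite, $\cI_2 \circ \cI$ is a two-sided adjoint of $\cFF$, hence canonically isomorphic to $\cII$ (exactly as $\cI_1 \circ \cI$ is, cf. \rmkref{rmk:Gamma_2}). If one insists on the on-the-nose identity of half-braidings, it reduces through $S$ to the corresponding identity for $\cC^\bop$, at the cost of the usual inner/outer induction bookkeeping. The one genuine thing to verify, and the main obstacle, is the opening claim that $S$ is well defined: one must check directly from the picture defining \textbf{COMM} that reversing the braiding of $\cC$ sends the \textbf{COMM} condition for $(\lmb^1,\lmb^2)$ to the \textbf{COMM} condition for $(\lmb^2,\lmb^1)$ in $\cC^\bop$. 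This is the mirror of \rmkref{rmk:COMM_strands} — topologically, interchanging the meridian and the longitude reverses the sign of their transverse intersection — and once it is in hand everything else is formal.
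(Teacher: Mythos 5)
Your proposal is correct, and it takes a genuinely different route from the paper. The paper's proof of \prpref{prp:intermediate_adjoint_2} is literally ``Similar to \prpref{prp:elliptic_adjoint}'': a direct string-diagram verification that $(\underset{\sim}{\lmb},\Gamma)$ satisfies COMM, that $\cI_2$ is a well-defined functor, and that the displayed maps between $\Hom$ spaces are mutually inverse. You instead transport \prpref{prp:intermediate_adjoint_1} across a braiding-reversal symmetry: the plain swap $(X,\lmb^1,\lmb^2)\mapsto(X,\lmb^2,\lmb^1)$ is an isomorphism $S:\ZZC\xrightarrow{\sim}\cZ^{\textrm{el}}(\cC^\bop)$, while $\cZ(\cC^\bop)=\ZC$ and the induction $\cI$, the half-braiding $\Gamma$, and the adjunction maps of \prpref{prp:adjoint} involve no braiding of $\cC$, so that $\cF_2=\cF_1^{\cC^\bop}\circ S$ and $\cI_2:=S^\inv\circ\cI_1^{\cC^\bop}$ delivers the adjunction, the object formula, and the morphism formula all at once. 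The step you single out as the crux does check out algebraically, not just topologically: compose both sides of COMM with the inverse crossings ($\id_X\tnsr c_{A,B}^\inv$ on one end, $c_{B,A}\tnsr\id_X$ on the other) and then interchange the labels $A$ and $B$; the result is precisely COMM for $(\lmb^2,\lmb^1)$ with respect to the braiding $c^\bop_{A,B}=c_{B,A}^\inv$. Since $\cC^\bop$ is again premodular, \prpref{prp:intermediate_adjoint_1} applies to it, and your reading of $\underset{\sim}{\lmb}$ as ``$\wdtld{\lmb}$ computed in $\cC^\bop$'' is exactly what the parenthetical in the statement means. What your route buys is economy and a structural explanation of why the two intermediate adjunctions are mirror images (the algebraic counterpart of \rmkref{rmk:first_second_hfbrd} and of $\cF_2=\cF_1\circ T_s$ in \rmkref{rmk:sl2z_induct_forget}); what the paper's route buys is a self-contained computation establishing the formulas exactly as drawn.

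One point where your caution improves on the literal statement: the factorization $\cII=\cI_2\circ\cI$ can only hold up to canonical isomorphism, so your appeal to uniqueness of two-sided adjoints is the right move, not a concession. By \rmkref{rmk:Gamma_1} and \rmkref{rmk:Gamma_2}, the half-braidings of $\cII X$ are the outer induction half-braiding $\Gamma^1$ and $\Gamma^2=\wdtld{\Gamma}$, whereas $\cI_2(\cI X)$ carries $\underset{\sim}{\Gamma}$ in the first slot and the outer induction half-braiding in the second, and these genuinely differ: for $\cC=\Rep(\mathbb{Z}/2)$ with simples $\one, \epsilon$ and $X=\one$, the map $(\Gamma^2)_\epsilon$ sends the $(i,j)=(0,0)$ summand of $\iinduct{X}$ into the $(0,\epsilon)$ summand, while the outer induction half-braiding sends it into the $(\epsilon,0)$ summand. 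So the stated equality must be read as a canonical natural isomorphism, which is what your argument (or your reduction through $S$) produces; this is consistent with the paper's own ``$\cI_2\simeq T_s^\inv\circ\cI_1$'' in \rmkref{rmk:sl2z_induct_forget}, and it is exactly the ``inner/outer induction bookkeeping'' you anticipated.
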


\begin{proof}
Similar to \prpref{prp:elliptic_adjoint}.
\end{proof}

\begin{remark}
\label{rmk:sl2z_induct_forget}
In fact, there are infinitely many such pairs of induct-forget pairs,
indexed by elements of $\SLZ$. Indeed, in \secref{sec:sl2z},
we exhibit an $\SLZ$-action on $\ZZC$.
Then precomposing $\cF_1$ and postcomposing $\cI_1$
with the action of a group element
gives another forget-induction adjoint pair.
For example, $\cF_2 = \cF_1 \circ T_s$,
and $\cI_2 \simeq T_s^\inv \circ \cI_1$.
This has a simple explanation in terms of the Crane-Yetter TQFT.
Namely, each forgetful functor corresponds to one way to
cut the once-punctured torus along an embedded arc into an annulus
(the arc necessarily connects puncture to puncture),
while the corresponding
induction functor would be an inclusion of the annulus into
the once-punctured torus that avoids the cut.
This description is inspired by the description given in \ocite{K}
for the forget-induction adjunction between $\cC$ and $\ZC$
in terms of the string-nets interpretation of Turaev-Viro theory.
We will say more in forthcoming work.
\hfill $\triangle$
\end{remark}

\begin{remark}
\label{rmk:first_second_hfbrd}
As pointed out in \defref{def:ZZC},
the COMM relation is not commutative,
so it may appear that the first and second half-braidings
somehow have distinct characteristics.
However, this is merely a manifestation of the
dependence of the equivalence
$\ZZC \to \Zcy(\punctorus)$
on certain ``coordinate" choices (see Introduction).
The action of $s$ in the $\SLZ$-action discussed in
\secref{sec:sl2z} lends further credence to this,
swapping the first and second half-braidings on an object,
but to satisfy COMM, one of them is half-twisted.
\hfill $\triangle$
\end{remark}

In summary, we have the following (2-)commutative diagram:
\[
  \begin{tikzcd}[column sep=large,row sep=large]
    \cC \ar[r, "\cI", shift left=1] \ar[d, "\cI", shift left=1]
      \ar[rd, "\cII", shift left=1]
    & \ZC \ar[d, "\cI_1", shift left=1] \ar[l, "\cF", shift left=1]\\
    \ZC \ar[r, "\cI_2", shift left=1] \ar[u, "\cF", shift left=1]
    & \ZZC \ar[l, "\cF_2", shift left=1] \ar[u, "\cF_1", shift left=1]
      \ar[ul, "\cFF", shift left=1] 
  \end{tikzcd}
\]

\subsection{Tensor Product on $\ZZC$}
\label{sec:tensor_product}

Recall that for a monoidal category $\cC$,
the Drinfeld center $\ZC$ is a braided monoidal category,
with tensor product given by
\[
  (X,\lmb) \tnsr (Y,\mu) := (X\tnsr Y, \lmb \tnsr \mu)
\]
(see \prpref{prp:ZC_modular}).
One might naively try to define a tensor product on $\ZZC$ by
\[
  (X,\lmb^1,\lmb^2) \tnsr (Y,\mu^1,\mu^2)
  = (X\tnsr Y, \lmb^1\tnsr\mu^1, \lmb^2\tnsr\mu^2)
\]
Unfortunately, this doesn't work, because the two half-braidings
$\lmb^1\tnsr\mu^1$ and $\lmb^2\tnsr\mu^2$ do not satisfy COMM
(unless $\cC$ is symmetric, which we discuss in
\secref{sec:symmetric_C}).\\

All is not lost, however. A closer look at the tensor product
on the Drinfeld center reveals that it arises from
considering the pair of pants in Turaev-Viro.
There is a modified version of the pair of pants
for the once-punctured torus (see \rmkref{rmk:pop})
that leads to the following definition:

\begin{definition}
\label{def:reduced_tnsr}
Let $\lmb,\mu$ be half-braidings on $X,Y$, respectively.
Define $X \tnsrproj{\lmb}{\mu} Y$ to be the image of the projection
$Q_{\lmb,\mu} \lcirclearrowright X \tnsr Y$, where
\[
  \includegraphics[height=60pt]{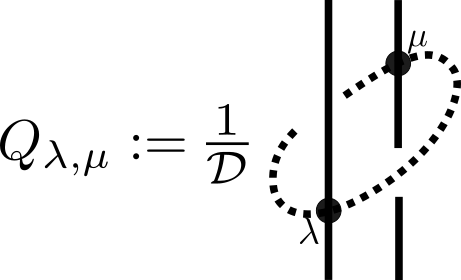}
\]
We call $X \tnsrproj{\lmb}{\mu} Y$ the \emph{reduced tensor product
of $X$ and $Y$ (with respect to $\lmb$ and $\mu$)}.
\hfill $\triangle$
\end{definition}

\begin{remark}
\label{rmk:image_choice}
Strictly speaking, in defining $X\tnsrproj{\lmb}{\mu} Y$
as the image of some projection,
we have to make a choice of some object along
with certain maps relating it to $X\tnsr Y$.
We make such a choice arbitrarily for each $X,Y$,
but identify morphisms out of $X\tnsrproj{\lmb}{\mu} Y$
with morphisms out of $X\tnsr Y$ that are invariant under
precomposing with $Q_{\lmb,\mu}$,
and similarly for morphisms into $X\tnsrproj{\lmb}{\mu} Y$:
\begin{align*}
  \Hom_\cC(X\tnsrproj{\lmb}{\mu} Y, N)
  &\cong \{g\in \Hom_\cC(X\tnsr Y, N)|g = g\circ Q_{\lmb,\mu}\}
  = \Hom_\cC(X\tnsr Y, N) \circ Q_{\lmb,\mu} \\
  \Hom_\cC(M, X\tnsrproj{\lmb}{\mu} Y)
  &\cong \{f\in \Hom_\cC(M, X\tnsr Y)|f = Q_{\lmb,\mu} \circ f\}
  = Q_{\lmb,\mu} \circ \Hom_\cC(M, X\tnsr Y)
\end{align*}
For brevity, when describing a morphism
$g: X\tnsrproj{\lmb}{\mu} Y \to N$,
we will sometimes omit $Q_{\lmb,\mu}$,
or more accurately we implicitly precompose $g$
with $Q_{\lmb,\mu}$.
Likewise for morphisms $f: M \to X\tnsrproj{\lmb}{\mu} Y$.
\hfill $\triangle$
\end{remark}

\begin{lemma}
\label{lem:tnsrproj_hfbrd}
Let $\lmb,\mu$ be half-braidings on $X,Y$, respectively.
For $A\in \cC$, we have the following equality
of morphisms
$A \tnsr (X\tnsrproj{\lmb}{\mu} Y)
\to (X\tnsrproj{\lmb}{\mu} Y) \tnsr A$:
\[
  \includegraphics[height=60pt]{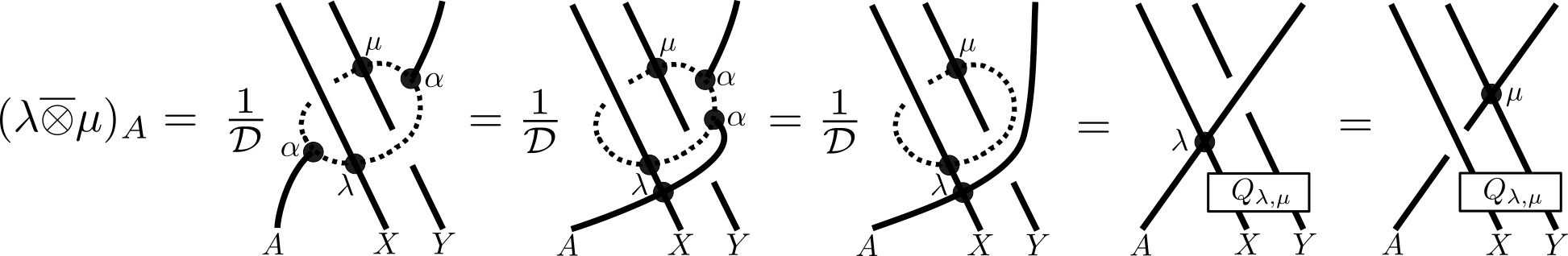}
\]
So defined, $\lmb \tnsrbar \mu$ is a half-braiding
on $X \tnsrproj{\lmb}{\mu} Y$.
\end{lemma}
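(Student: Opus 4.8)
The plan is to realize $\lmb \tnsrbar \mu$ as the half-braiding that the Drinfeld-center tensor half-braiding $\lmb \tnsr \mu$ on $(X \tnsr Y, \lmb \tnsr \mu) \in \ZC$ induces on the image of the idempotent $Q_{\lmb,\mu}$. Concretely, since $A \tnsr -$ and $- \tnsr A$ are exact, I would identify $A \tnsr (X \tnsrproj{\lmb}{\mu} Y) = \im(\id_A \tnsr Q_{\lmb,\mu})$ and $(X \tnsrproj{\lmb}{\mu} Y) \tnsr A = \im(Q_{\lmb,\mu} \tnsr \id_A)$, as in \rmkref{rmk:image_choice}. The candidate $(\lmb \tnsrbar \mu)_A$ is then the morphism induced on these images by $(\lmb \tnsr \mu)_A \colon A \tnsr X \tnsr Y \to X \tnsr Y \tnsr A$. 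The entire lemma thus reduces to a single assertion: that $Q_{\lmb,\mu}$ is an endomorphism of $(X \tnsr Y, \lmb \tnsr \mu)$ in $\ZC$, i.e. that it intertwines $\lmb \tnsr \mu$ with itself.

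First I would record that this intertwining statement is exactly the displayed equality. Indeed, saying that $(\lmb \tnsr \mu)_A$ descends to a well-defined map $\im(\id_A \tnsr Q_{\lmb,\mu}) \to \im(Q_{\lmb,\mu} \tnsr \id_A)$ amounts to the two conditions $(\lmb \tnsr \mu)_A \circ (\id_A \tnsr Q_{\lmb,\mu}) = (Q_{\lmb,\mu} \tnsr \id_A) \circ (\lmb \tnsr \mu)_A \circ (\id_A \tnsr Q_{\lmb,\mu})$ and $(Q_{\lmb,\mu} \tnsr \id_A) \circ (\lmb \tnsr \mu)_A = (Q_{\lmb,\mu} \tnsr \id_A) \circ (\lmb \tnsr \mu)_A \circ (\id_A \tnsr Q_{\lmb,\mu})$; together these say precisely that the two outer composites agree, which is the content of the equation pictured in the statement.

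The technical heart of the proof is therefore the string-diagram computation showing that the $A$-strand slides through the idempotent $Q_{\lmb,\mu}$ of \defref{def:reduced_tnsr}. Here I would proceed as in \lemref{lem:projection}: write out $Q_{\lmb,\mu}$ with its dashed (Kirby-colored) loop and the copies of $\lmb,\mu$ that constitute it, and push $(\lmb \tnsr \mu)_A$ past it in stages, using naturality of the half-braidings together with the combined ``useful braiding'' identity of \secref{sec:drinfeld_center} to commute $A$ past the strands of $X$ and $Y$. I expect the main obstacle to be the interaction of the $A$-strand with the dashed loop: one must invoke the appendix sliding identities for the Kirby color to carry $A$ across it, while keeping careful track of the fact that $\lmb \tnsr \mu$ braids by $\lmb$ first and then $\mu$, so that the two half-braidings appearing in $Q_{\lmb,\mu}$ are crossed in the correct order. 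Once this equality is in hand, well-definedness of $(\lmb \tnsrbar \mu)_A$ is automatic, and since $Q_{\lmb,\mu}$ is an idempotent morphism in the abelian category $\ZC$, its image $(X \tnsrproj{\lmb}{\mu} Y, \lmb \tnsrbar \mu)$ is an object of $\ZC$, with half-braiding constructed exactly as in the image case of the proof of \prpref{prp:ZC_modular}; hence $\lmb \tnsrbar \mu$ satisfies the naturality and tensor-product compatibility of a half-braiding for free, completing the proof.
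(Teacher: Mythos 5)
Your proposal is correct, and its technical heart is the same as the paper's: the paper's proof consists precisely of verifying the chain of diagrammatic equalities in its figure, using the two ingredients you name --- the half-braiding property of $\lmb$ (and of $\mu$ for the final equality) to slide the $A$-strand through the crossings, and the Kirby-color identity \lemref{lem:identity_combine} to handle the dashed loop of $Q_{\lmb,\mu}$. The difference is in the packaging. The paper never isolates your key claim that $Q_{\lmb,\mu}$ is an idempotent in $\End_{\ZC}\big((X\tnsr Y,\lmb\tnsr\mu)\big)$; it proves its equalities directly and then remarks that the half-braiding axioms for $\lmb\tnsrbar\mu$ can be read off from one of the intermediate expressions, which visibly inherits naturality and multiplicativity from $\lmb\tnsr\mu$. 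You instead reformulate the displayed equality as that intertwining statement (correctly: your two absorption identities are jointly equivalent to it, using $Q_{\lmb,\mu}^2=Q_{\lmb,\mu}$), and then invoke the machinery from the proof of \prpref{prp:ZC_modular}: the image of an idempotent $\ZC$-morphism, realized for instance as $\ker_{\ZC}(\id-Q_{\lmb,\mu})$, has underlying object the image taken in $\cC$ and carries an inherited half-braiding that is automatically natural and multiplicative. This is a genuine, if modest, gain in clarity --- it identifies $(X\tnsrproj{\lmb}{\mu}Y,\lmb\tnsrbar\mu)$ as nothing but the image of $Q_{\lmb,\mu}$ formed in $\ZC$ rather than in $\cC$, so the ``is a half-braiding'' clause of the lemma comes for free --- but it does not spare you the sliding computation, which you rightly flag as the remaining obstacle and which is exactly what the paper's figure carries out; the intermediate forms in that chain (with dashed loops merged via \lemref{lem:identity_combine}) are proof devices your route bypasses rather than content you would be missing.
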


\begin{proof}
The first and third equality
uses the fact that $\lmb$ is a half-braiding.
The second equality uses \lemref{lem:identity_combine}.
This also easily implies the fact that $\lmb\tnsrbar \mu$ is a half-braiding.
The last equality follows from similar sequence of equalities.\\

The third equality also easily implies that $\lmb\tnsrbar\mu$
is a half-braiding.
\end{proof}

\begin{corollary}
\label{cor:reduced_tnsr_triple}
Let $\lmb,\mu,\zeta$ be half-braidings on $X,Y,Z\in \cC$, respectively.
  Then
\begin{align*}
  (X \tnsrproj{\lmb}{\mu} Y) \tnsrproj{\lmb\tnsrbar\mu}{\zeta} Z
  = \textrm{im}(Q_{\lmb,\mu,\zeta} \lcirclearrowright (X\tnsr Y)\tnsr Z) \\
  X \tnsrproj{\lmb}{\mu\tnsrbar\zeta} (Y \tnsrproj{\mu}{\zeta} Z)
  = \textrm{im}  (Q_{\lmb,\mu,\zeta} \lcirclearrowright X\tnsr (Y\tnsr Z)) \\
\end{align*}
where
$Q_{\lmb,\mu,\zeta} =
 (\id_X \tnsr Q_{\mu,\zeta}) \circ (Q_{\lmb,\mu} \tnsr \id_Z)$.\\

More generally, if we have half-braidings $\lmb_1,\ldots \lmb_k$
on $X_1,\ldots, X_k$, respectively,
then $(\ldots(X_1 \tnsrproj{\lmb^1}{\lmb^2} X_2)\ldots)\tnsrproj{\ldots}{\lmb^k} X_k$
is the image of the projection
\[
  \includegraphics[height=80pt]{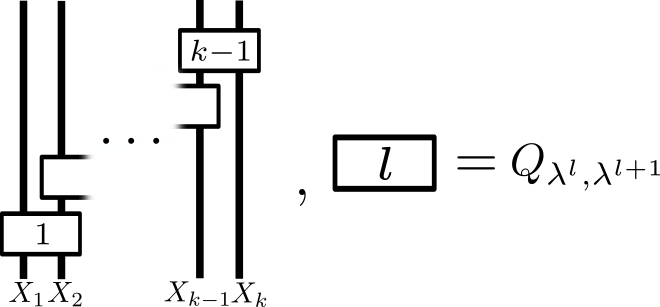}
\]
and similarly for any order of taking reduced tensor products.
\end{corollary}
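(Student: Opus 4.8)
The plan is to reduce both displayed equalities to a single diagrammatic identity between projectors on $(X\tnsr Y)\tnsr Z$, and then obtain the general statement by induction. First I would unfold the left-hand equality. By \defref{def:reduced_tnsr} and the identification of morphisms in \rmkref{rmk:image_choice}, the object $(X\tnsrproj{\lmb}{\mu}Y)\tnsrproj{\lmb\tnsrbar\mu}{\zeta}Z$ is the image of $Q_{\lmb\tnsrbar\mu,\zeta}$ acting on $(X\tnsrproj{\lmb}{\mu}Y)\tnsr Z$. Since $X\tnsrproj{\lmb}{\mu}Y$ is by construction the summand of $X\tnsr Y$ cut out by the idempotent $Q_{\lmb,\mu}$, this image is realized inside $(X\tnsr Y)\tnsr Z$ by the operator $(Q_{\lmb,\mu}\tnsr\id_Z)\circ Q_{\lmb\tnsrbar\mu,\zeta}\circ(Q_{\lmb,\mu}\tnsr\id_Z)$, where $Q_{\lmb\tnsrbar\mu,\zeta}$ is now read on the ambient object via the explicit formula for the induced half-braiding $\lmb\tnsrbar\mu$ from \lemref{lem:tnsrproj_hfbrd}.

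The heart of the argument is the string-diagram computation showing that this operator equals $Q_{\lmb,\mu,\zeta}=(\id_X\tnsr Q_{\mu,\zeta})\circ(Q_{\lmb,\mu}\tnsr\id_Z)$. Substituting the defining formula for $\lmb\tnsrbar\mu$ turns the dashed loop of $Q_{\lmb\tnsrbar\mu,\zeta}$ into a loop passing through $X$ and $Y$ via $\lmb$ and $\mu$; one then absorbs the outer copies of $Q_{\lmb,\mu}$ using its idempotency together with the combining identity \lemref{lem:identity_combine}, exactly as in the proof of \lemref{lem:tnsrproj_hfbrd}, so that the $\lmb$-loop and the $\mu$-loop separate into the two advertised factors. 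Comparing images then yields the first equality.

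For the second equality I would run the symmetric computation, building $Y\tnsrproj{\mu}{\zeta}Z$ first and unfolding $Q_{\lmb,\mu\tnsrbar\zeta}$; this a priori produces the reversed composite $(Q_{\lmb,\mu}\tnsr\id_Z)\circ(\id_X\tnsr Q_{\mu,\zeta})$, so that the two equalities together amount to the assertion that the factors $\id_X\tnsr Q_{\mu,\zeta}$ and $Q_{\lmb,\mu}\tnsr\id_Z$ commute. I would establish this commutation directly --- it is the analog for the reduced tensor product of the commuting-projector statement \lemref{lem:projection_elliptic} --- by sliding the two dashed loops past one another. This simultaneously shows that $Q_{\lmb,\mu,\zeta}$ is genuinely idempotent and that both parenthesizations compute its image, so the two displayed equalities both hold with the single stated operator $Q_{\lmb,\mu,\zeta}$.

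Finally, the general statement follows by induction on $k$. After unfolding all the induced half-braidings as above, any parenthesization of the $k$-fold reduced tensor product of $X_1,\ldots,X_k$ becomes the image of an ordered composite of the elementary projectors $\id\tnsr Q_{\lmb^i,\lmb^{i+1}}\tnsr\id$, each acting on two adjacent factors. Pairwise commutation of non-overlapping such projectors, together with the $k=3$ case applied to each triple of consecutive merges, shows that this composite --- and hence its image, the projection pictured in the statement --- is independent of the chosen order. The main obstacle is the single diagrammatic identity of the second paragraph: matching the dashed-loop weights, the $\cD$-normalizations, and the orientations when the loop coming from $Q_{\lmb,\mu}$ is merged into the loop defining $Q_{\lmb\tnsrbar\mu,\zeta}$, and in particular verifying the loop-sliding that yields the commutation, is where the genuine computation lies.
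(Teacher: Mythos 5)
Your overall architecture---unfolding the iterated product via \rmkref{rmk:image_choice}, conjugating by the inclusion--projection pair of the summand $X\tnsrproj{\lmb}{\mu}Y\subseteq X\tnsr Y$ so that everything becomes an identity among operators on $(X\tnsr Y)\tnsr Z$, then inducting for general $k$---is exactly how this corollary is meant to follow from \lemref{lem:tnsrproj_hfbrd} (the paper gives no separate proof). The fatal problem is your substitution step: you unfold $Q_{\lmb\tnsrbar\mu,\zeta}$ by letting its dashed loop pass through \emph{both} $X$ (via $\lmb$) and $Y$ (via $\mu$), i.e.\ you are taking $\lmb\tnsrbar\mu$ to be the Drinfeld-center half-braiding $\lmb\tnsr\mu$ of \prpref{prp:ZC_modular} restricted to the image of $Q_{\lmb,\mu}$. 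That is not what \lemref{lem:tnsrproj_hfbrd} defines: in each of its equivalent expressions the strand passes through only \emph{one} of the two factors via its half-braiding (crossing the other factor by the braiding of $\cC$ alone), and the content of the lemma is precisely that the two one-factor choices agree once sandwiched with $Q_{\lmb,\mu}$. This ``pass through one factor, not both'' behaviour is the whole point of the stacking cobordism $Y_1$ in \rmkref{rmk:pop}, as opposed to the pair of pants $Y_2$.

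With your reading, the identity at the heart of your second paragraph is simply false, so no bookkeeping of weights and orientations can rescue it. Test it against \xmpref{xmp:group_algebra_elliptic_tnsr}: for $\cC=\Rep(G)$, objects carry $G\times G$-gradings, $Q_{\lmb^1,\mu^1}$ projects onto the part where the first gradings of the two factors agree, and $\lmb^1\tnsr\mu^1$ acts on $V_{(g,*)}\tnsr W_{(g,*)}$ through the product $g\cdot g=g^2$, whereas $\lmb^1\tnsrbar\mu^1$ acts through the common value $g$ --- this is visible in the paper's formula $(V\tnsr W)_{(g,h)}=\sum_{h_1h_2=h}V_{(g,h_1)}\tnsr W_{(g,h_2)}$, where the first grading of the reduced product is the diagonal one, not the multiplied one. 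Consequently your candidate operator $(Q_{\lmb,\mu}\tnsr\id_Z)\circ Q_{\lmb\tnsr\mu,\zeta}\circ(Q_{\lmb,\mu}\tnsr\id_Z)$ projects onto $\bigoplus_g X_{(g,*)}\tnsr Y_{(g,*)}\tnsr Z_{(g^2,*)}$, while $Q_{\lmb,\mu,\zeta}$ projects onto $\bigoplus_g X_{(g,*)}\tnsr Y_{(g,*)}\tnsr Z_{(g,*)}$; these differ for every nontrivial $G$. Once $\lmb\tnsrbar\mu$ is identified correctly the first displayed equality is nearly immediate, since reading $Q_{\lmb\tnsrbar\mu,\zeta}$ through its ``pass through $Y$ via $\mu$'' expression gives literally $(Q_{\lmb,\mu}\tnsr\id_Z)\circ(\id_X\tnsr Q_{\mu,\zeta})\circ(Q_{\lmb,\mu}\tnsr\id_Z)$, and the remainder of your plan (absorbing the redundant outer factor, commutation of $Q_{\lmb,\mu}\tnsr\id_Z$ with $\id_X\tnsr Q_{\mu,\zeta}$ for the second bracketing and for idempotency, then induction) is the right residual work --- but that sliding/commutation is a genuine computation you have only deferred, not supplied.
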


\begin{defn-prop}
\label{def:ZZC_tnsr}
$\ZZC$ admits the following monoidal structure:\\

For objects $(X,\lmb^1,\lmb^2),(Y,\mu^1,\mu^2) \in \ZZC$,
their tensor product is defined by
\[
  (X,\lmb^1,\lmb^2) \tnsr (Y,\mu^1,\mu^2)
  = (X \tnsrproj{\lmb^1}{\mu^1} Y, \lmb^1 \tnsrbar \mu^1, \lmb^2 \tnsr \mu^2)
\]
where $X \tnsrproj{\lmb^1}{\mu^1} Y$ was defined in \defref{def:reduced_tnsr},
and $\lmb^1 \tnsrbar \mu^1$ was defined in \lemref{lem:tnsrproj_hfbrd}.\\

The associativity constraint $a$ is given by the associativity constraint
of $\cC$ (understood as in \rmkref{rmk:image_choice}).\\

The unit object $\oneel$ is
$\cI_1 (\one, \id_-) = (\induct{},\Gamma,\Omega)$,
where $\Omega = \wdtld{\id_-}$,
with left and right unit constraint given by
\[
  \includegraphics[height=60pt]{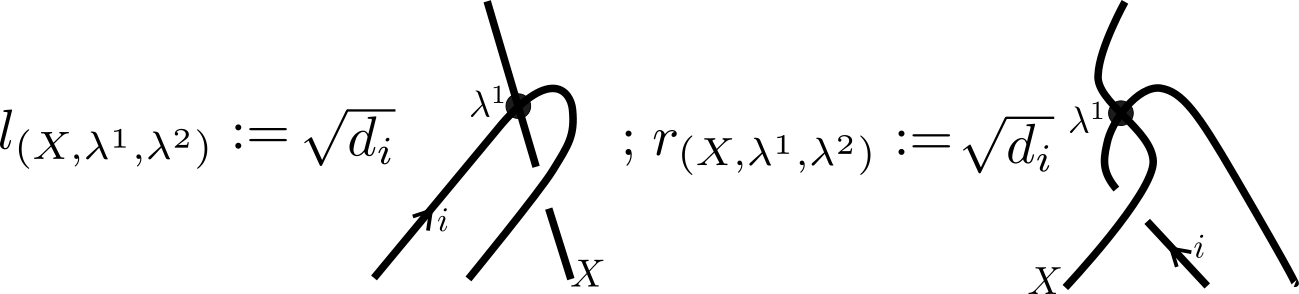}
\]
\hfill $\triangle$
\end{defn-prop}

\begin{proof}
Let $\cX = (X,\lmb^1,\lmb^2),
      \cY = (Y,\mu^1,\mu^2), \cZ = (Z,\zeta^1,\zeta^2)$.
It is easy to see that $\lmb^1\tnsrbar\mu^1$ and $\lmb^2\tnsr\mu^2$
satisfy COMM, so the definition of $\cX \tnsr \cY$ makes sense.\\

By \corref{cor:reduced_tnsr_triple},
the associativity constraint $a_{\cX,\cY,\cZ}$
is just $Q_{\lmb^1,\mu^1,\zeta^1}:
  (X\tnsr Y)\tnsr Z \to X\tnsr (Y\tnsr Z)$,
so is an isomorphism
$(X \tnsrproj{\lmb^1}{\mu^1} Y) \tnsrproj{\lmb^1\tnsrbar\mu^1}{\zeta^1} Z
  \to X \tnsrproj{\lmb^1}{\mu^1\tnsrbar\zeta^1} (Y \tnsrproj{\mu^1}{\zeta^1} Z)$.
The pentagon axiom also follows from \corref{cor:reduced_tnsr_triple}.\\

To see that $l_\cX,r_\cX$ are isomorphisms,
we provide morphisms
\[
  \includegraphics[height=60pt]{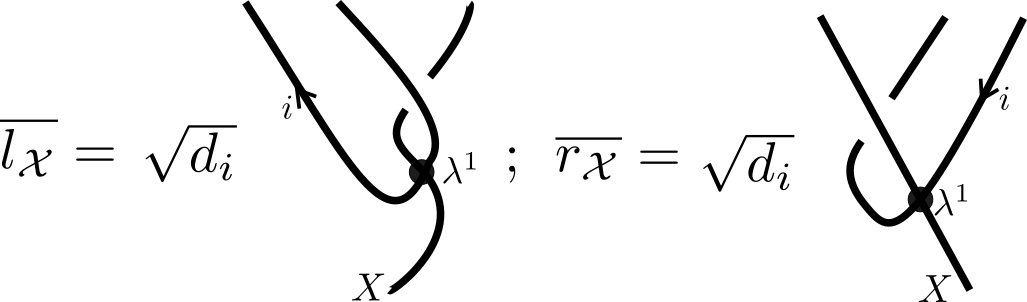}
\]
and check that they are inverses: $l_\cX \circ \overline{l_\cX} = \id_\cX,
  \overline{l_\cX} \circ l_\cX = Q_{\Gamma,\lmb^1}$,
and likewise for $r_\cX$.
For example,

\[
  \includegraphics[height=60pt]{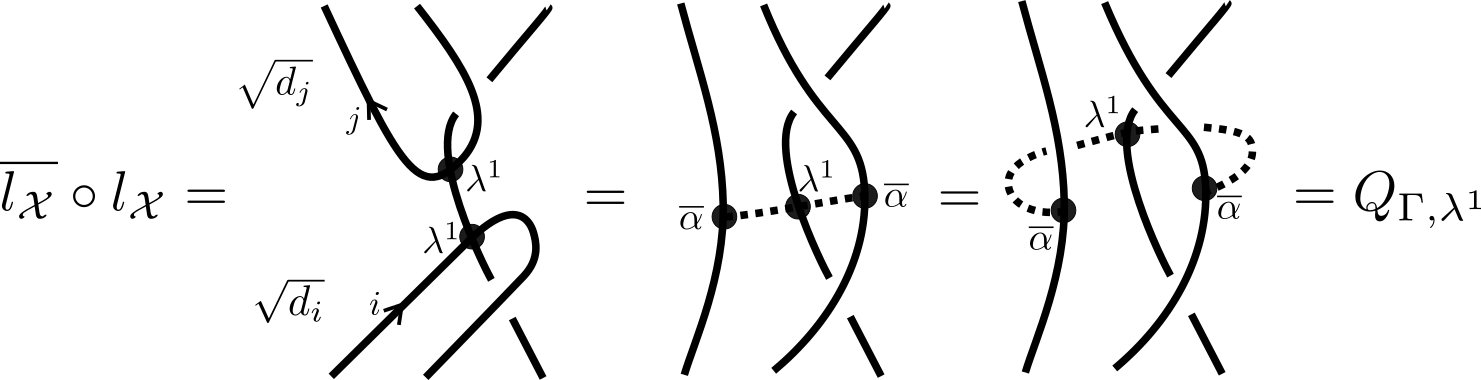}
\]

The triangle axiom for unit constraints is a simple consequence of
\lemref{lem:tnsrproj_hfbrd} (in particular the last equality).
Let us note that the $\Omega$ in $\oneel$
is the same $\Omega$ as in the proof of \lemref{lem:fully_faithful}
but specifically for $X = \one$,
so that $\oneel = i\one$.
\end{proof}

\begin{proposition}
\label{prp:I1_tensor}
For $(X,\lmb),(Y,\mu)\in \ZC$, let
$J_{(X,\lmb),(Y,\mu)} = J_{X,Y} : \cI_1(X,\lmb) \tnsr \cI_1(Y,\mu) \to \cI_1((X,\lmb)\tnsr(Y,\mu))$
be defined by
\[
  \includegraphics[height=60pt]{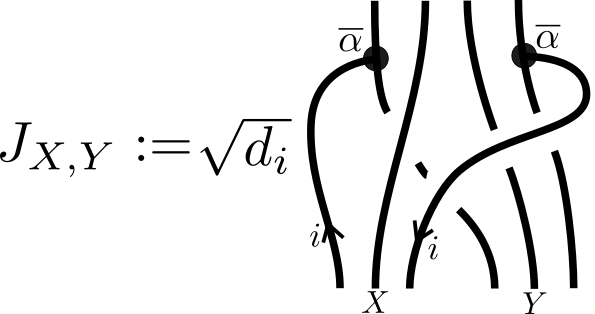}
\]
Then $(\cI_1,J): \ZC \to \ZZC$ is a monoidal functor.
\end{proposition}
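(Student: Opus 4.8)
The plan is to equip $\cI_1$ with the structure of a strong monoidal functor by verifying, for the datum $(\cI_1,J)$, the three standard requirements: that each $J_{X,Y}$ is a well-defined isomorphism in $\ZZC$ natural in both arguments, that the $J$'s satisfy the associativity (hexagon) coherence, and that they are compatible with the units. A convenient feature to exploit throughout is that, by \defref{def:ZZC_tnsr}, the monoidal unit of $\ZZC$ is $\oneel = \cI_1(\one,\id_-)$, so the unit-comparison morphism $\phi\colon \oneel \to \cI_1(\one,\id_-)$ may be taken to be the identity and the unit axioms will be essentially free.

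First I would check that $J_{X,Y}$ is genuinely a morphism in $\ZZC$, i.e. that it intertwines both half-braidings of its source and target. Recall from \prpref{prp:intermediate_adjoint_1} that $\cI_1(X,\lmb) = (X_iXX_i^*, \Gamma, \wdtld{\lmb})$; thus the source $\cI_1(X,\lmb)\tnsr\cI_1(Y,\mu)$ carries the first half-braiding $\Gamma\tnsrbar\Gamma$ on the reduced tensor product $X_iXX_i^* \tnsrproj{\Gamma}{\Gamma} X_jYX_j^*$ together with the second half-braiding $\wdtld{\lmb}\tnsr\wdtld{\mu}$, while the target $\cI_1((X,\lmb)\tnsr(Y,\mu))$ carries $\Gamma$ and $\wdtld{\lmb\tnsr\mu}$. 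The intertwining of the first (induction) half-braidings amounts to compatibility of $J$ with the projection $Q_{\Gamma,\Gamma}$ defining the reduced tensor product, which follows from \lemref{lem:identity_combine} and the computation already carried out in \lemref{lem:tnsrproj_hfbrd}; the intertwining of the second half-braidings is a direct string-diagram manipulation, sliding an $A$-strand past $J$ and using that $\wdtld{\lmb}$ and $\wdtld{\mu}$ merge into $\wdtld{\lmb\tnsr\mu}$ exactly as $\lmb\tnsr\mu$ is the tensor half-braiding on $\ZC$. Naturality of $J$ in each variable is then routine: since $\cI_1(f) = \id_{X_i}\tnsr f\tnsr\id_{X_i^*}$ by \prpref{prp:intermediate_adjoint_1}, the morphism commutes with the cage and fusion strands comprising $J$, so each naturality square collapses to an equality of diagrams. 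That $J_{X,Y}$ is an \emph{isomorphism} I would see by exhibiting an explicit inverse built from the dual fusion maps that split the single cage $X_k(XY)X_k^*$ back into two cages; alternatively, via \rmkref{rmk:Gamma_1}, \rmkref{rmk:Gamma_2} and \corref{cor:hom_IX_IY} one recognizes both source and target as incarnations of $\cII(X\tnsr Y)$ and $J$ as the canonical identification.

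The heart of the proof, and the step I expect to be the main obstacle, is the associativity coherence. Here I would invoke \corref{cor:reduced_tnsr_triple}: both bracketings of the triple reduced tensor product of $X_iXX_i^*$, $X_jYX_j^*$, $X_kZX_k^*$ (with respect to the induction half-braidings $\Gamma$) are realized as the image of one and the same projection $Q_{\Gamma,\Gamma,\Gamma}$, and the associativity constraints on both sides are induced from that of $\cC$ as in \defref{def:ZZC_tnsr}. Modulo these identifications, the two composites $J_{X\tnsr Y,Z}\circ(J_{X,Y}\tnsr\id)$ and $J_{X,Y\tnsr Z}\circ(\id\tnsr J_{Y,Z})$ reduce to a single diagram merging three cages into one, so the hexagon becomes the associativity of the fusion maps inside $\cC$. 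The genuine difficulty lies in this diagrammatic merging and in the careful bookkeeping of which projections and associators are suppressed, in the sense of \rmkref{rmk:image_choice}.

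Finally, the unit compatibility is nearly automatic. With the unit comparison taken to be $\id_{\cI_1(\one,\id_-)}$, the two triangle axioms reduce to checking that $J_{\one,Y}$ and $J_{X,\one}$ coincide with the left and right unit constraints $l,r$ of $\ZZC$ written out in \defref{def:ZZC_tnsr}; each is a short comparison of string diagrams using \lemref{lem:identity_combine}. Assembling these pieces, $(\cI_1,J)$ is a monoidal functor.
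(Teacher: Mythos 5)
Your proposal follows essentially the same route as the paper's proof: exhibit an explicit inverse to $J_{X,Y}$ by splitting the single cage back into two (the paper does this computation via repeated use of \lemref{lem:Gamma_switch}), prove the hexagon by showing both composites merge the three cages into one and the same diagram (resting on \corref{cor:reduced_tnsr_triple}), and get the unit axiom essentially for free since $\oneel = \cI_1(\one,\id_-)$ by construction. Your extra checks (that $J$ intertwines both half-braidings and is natural) are correct and are simply left implicit in the paper; the only imprecision is your parenthetical alternative identifying source and target as ``incarnations of $\cII(X\tnsr Y)$'' --- they are incarnations of $\cI_1(X\tnsr Y,\lmb\tnsr\mu)$, which has one layer of induction rather than two --- but since this plays no role in your main argument it is harmless.
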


\begin{proof}
By construction, $\cI_1$ sends unit to unit.
We check that $J_{X,Y}$ has an inverse, $\overline{J_{X,Y}}$, as follows:
\[
  \includegraphics[height=140pt]{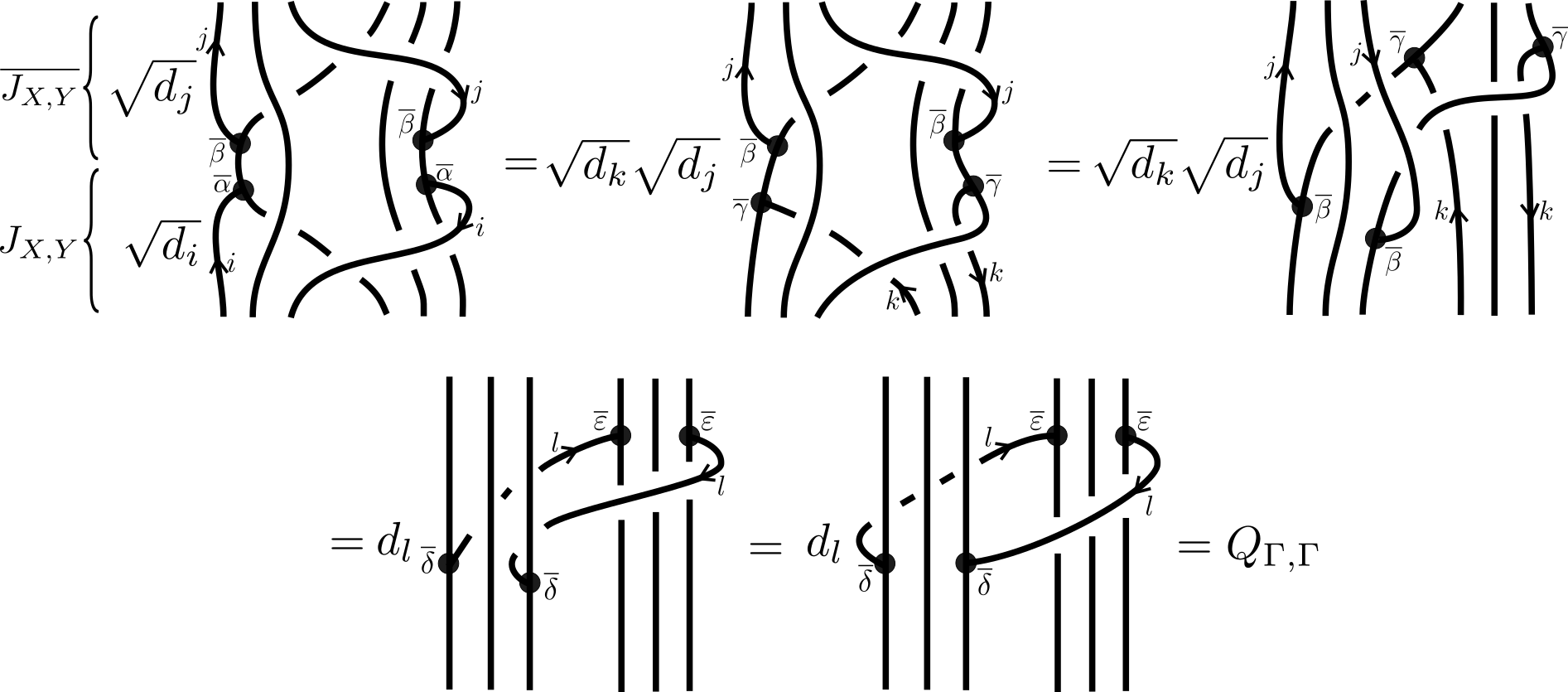}
\]
(using \lemref{lem:Gamma_switch} repeatedly),
and similarly check that $J_{X,Y}\circ \overline{J_{X,Y}} = \id$.
That $J$ satisfies the hexagon axiom follows from
\begin{align*}
  J_{X\tnsr Y,Z} \circ (J_{X,Y} \tnsr \id_Z) =
  \picmid{\includegraphics[height=80pt]{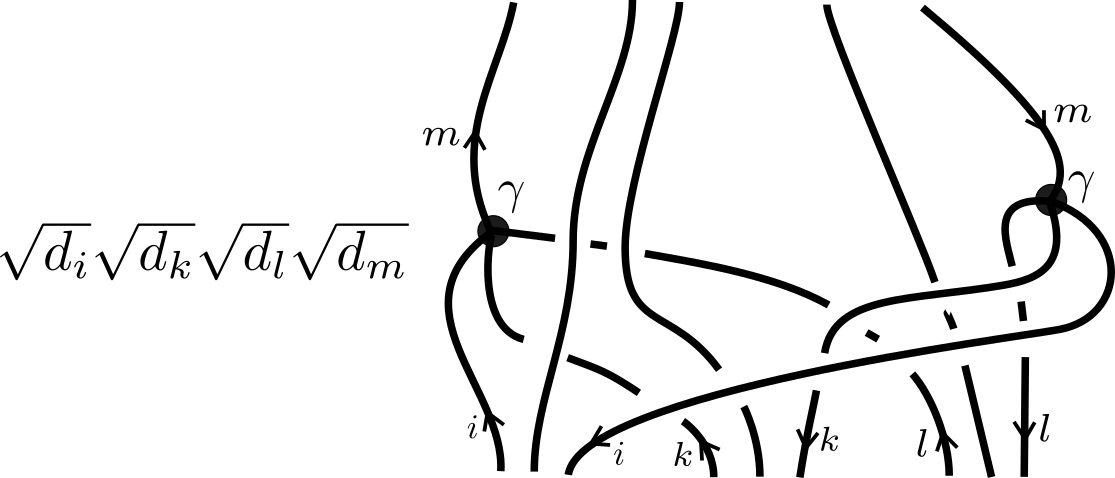}}
  = J_{X,Y\tnsr Z} \circ (\id_X \tnsr J_{Y,Z})
\end{align*}
\end{proof}

We work out in \xmpref{xmp:group_algebra_elliptic_tnsr}
what this tensor product looks like when $\cC = \Rep(G)$,
the category of finite-dimensional representations of a finite group $G$.\\

\begin{remark}
\label{rmk:multitensor}
The presence of the projection in \defref{def:reduced_tnsr} leads
this tensor product to be highly non-commutative,
and also makes $\ZZC$ multi-tensor but generally not tensor.
\hfill $\triangle$\\
\end{remark}

\begin{remark}
\label{rmk:pop}
\defref{def:reduced_tnsr} of the tensor product
looks rather ad hoc, but it actually arises from
understanding the functor that $\Zcy$ associates to the following
cobordism $Y$ from $\punctorus \sqcup \punctorus$ to $\punctorus$:
\[
  \punctorus :
  \picmid{\includegraphics[height=60pt]{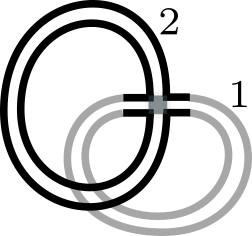}}
  \hspace{20pt}
  Y :
  \picmid{\includegraphics[height=100pt]{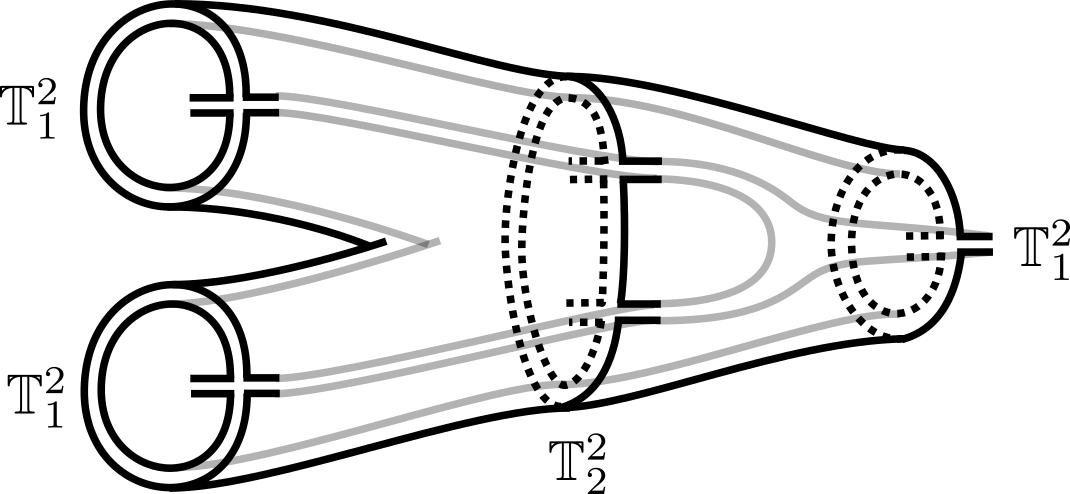}}
\]
In the diagram, $\punctorus$ is drawn as
the ``plumbing" of two annuli,
that is, it is the union of two annuli,
the gray 1-annulus (annulus labelled with a 1)
and the black 2-annulus,
glued along the common gray square in a transverse manner
(the 1-annulus and 2-annulus
do not meet away from the gray square in the middle).
We use the names 1- and 2-annuli suggestively,
to indicate that they are related to the first
and second half-braidings respectively.
While the picture has a boundary,
we should imagine it to be at infinity,
so that we are actually plumbing two open annuli
to get a once-punctured torus
(and not a torus with one boundary component).\\

In the picture for $Y$,
we omit most of the 1-annulus,
just drawing the bit that meets the 2-annulus.
The first (left) half of the cobordism
goes from $\punctorus \sqcup \punctorus$ to $\mathbb{T}_2^2$,
the twice punctured torus,
using a thickened pair of pants on the 2-annuli,
leaving the 1-annuli untouched.
The second (right) half of the cobordism
goes from $\mathbb{T}_2^2$ to $\punctorus$,
leaves the 2-annulus untouched,
and ``stacks" the two first annuli
(see next picture and explanation of $Y_1$).\\

We can also picture $Y$ as the union of the two cobordisms
$Y_2$ and $Y_1$, shown below,
each of which is a cobordism from two annuli to one annulus.
$Y_2$ and $Y_1$ are identified along
the dark gray portions \textbf{Y},
which is a three-dimensional thickened `Y',
thought of as a cobordism from two disks to one disk.
\[
  Y = Y_2 \displaystyle\bigcup_{\textrm{\textbf{Y}}} Y_1
  = \picmid{\includegraphics[width=0.75\linewidth]{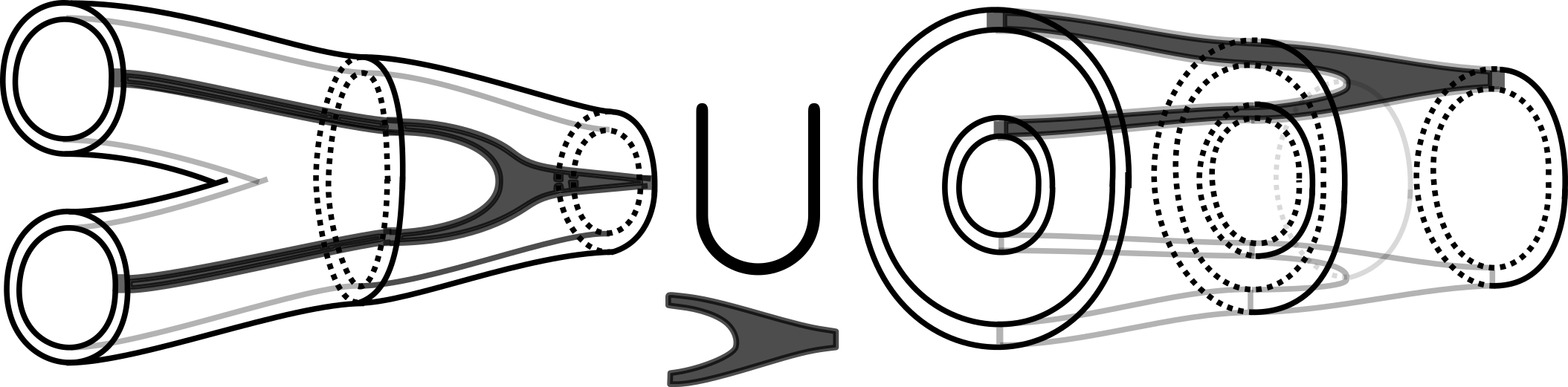}}
\]
The cobordism on the right, $Y_1$,
governs the behaviour of the 1-annuli.
It is obtained by taking 
a two-dimensional thickened `Y' (like the light gray part)
and crossing with $S^1$.
(In the first picture of $Y$,
only a small neighbourhood of \textbf{Y}
in $Y_1$ appears.)
We call this ``stacking"
because the cobordism takes in two annuli,
each thought of as a cylinder going from one circle boundary to the other,
and glues the outgoing boundary of one cylinder to the incoming boundary
of the other.
The cobordism on the left, $Y_2$,
is just a thickened pair of pants,
and governs the behaviour of the 2-annuli.\\

Recall that the tensor product of \defref{def:ZZC_tnsr}
is the usual $\lmb^2 \tnsr \mu^2$ for the second half-braiding,
and the ``reduced tensor product" $\lmb^1 \tnsrbar \mu^1$
for the first half-braiding.
This is reflected in (or more accurately, a consequence of)
the cobordism $Y$:
the 2-annuli are governed by the usual pair of pants $Y_2$,
while the 1-annuli are governed by this ``stacking" cobordism $Y_1$.\\

The cobordism $Y:\punctorus \sqcup \punctorus \to \punctorus$
is not ``commutative" in the way that the usual pair of pants is
- that is, there is no homeomorphism from the cobordism
to itself that flips the two input boundary components.
However, it is still associative,
so leads to a tensor product structure.\\

In the Introduction,
we mention that the category we associate to the annulus
is the Drinfeld center.
The cobordism $Y_2$ leads to the usual tensor product,
while the cobordism $Y_1$ leads to a different
(multitensor) monoidal structure on $\ZC$.
This has been discussed in \ocite{BBJ2},
though not in terms of half-braidings.
This will be the subject of an upcoming paper.
\hfill $\triangle$
\end{remark}

\subsection{$\SLZ$-action on $\ZZC$}
\label{sec:sl2z}

As briefly discussed in the introduction,
$\ZZC$ is the category we associate to a once-punctured torus
in an extended Crane-Yetter TQFT,
so we expect properties of the once-punctured torus to manifest in $\ZZC$.
For example, in \rmkref{rmk:sl2z_induct_forget},
we reveal that the intermediate induction functors $\cI_1,\cI_2$
has a topological interpretation.\\

Here, we will discuss an $\SLZ$-action on $\ZZC$,
since $\SLZ$ is the mapping class group of $\punctorus$.
We define the action by generators and relations,
but do not prove that they are coherent in the appropriate sense.
This is because our proof uses the topology of surfaces
in a crucial way,
so we postpone full proofs to future work
in the context of the extended Crane-Yetter TQFT.\\

Recall that $\SLZ$ is generated by two matrices
\[
  s = 
  \begin{pmatrix}
    0 & -1\\
    1 & 0
  \end{pmatrix}
  \textrm{ and }
  t =
  \begin{pmatrix}
    1 & 1\\
    0 & 1
  \end{pmatrix}
\]
subject to the relations

\begin{align*}
  r_1 &: s^4 = 1 \\
  r_2 &: sts = t^\inv s t^\inv
\end{align*}
so that $\SLZ = \eval{s,t | r_1,r_2}$.\\

Since $\SLZ$ is finitely presented,
we would like to write an action of $\SLZ$ on $\ZZC$
with a finite amount of data,
i.e. describe the action by ``generators and relations".
The general framework for this is spelled out
in the Appendix \secref{sec:group_action},
so here we just provide the relevant data,
and discuss concretely what needs to be proved.\\

To begin, we need to say how $s,t$ act on $\ZZC$.
We define auto-equivalences
$U_s, U_t \in \Aut(\ZZC)$
on objects as follows:
\begin{align*}
  U_s : (X, \lmb^1, \lmb^2)
          \mapsto (X, (\lmb^2)^\dagger, \lmb^1)\\
  U_t : (X, \lmb^1, \lmb^2)
          \mapsto (X, \lmb^1, \lmb^2 \ltimes \lmb^1)
\end{align*}
where
\[
  \includegraphics[height=60pt]{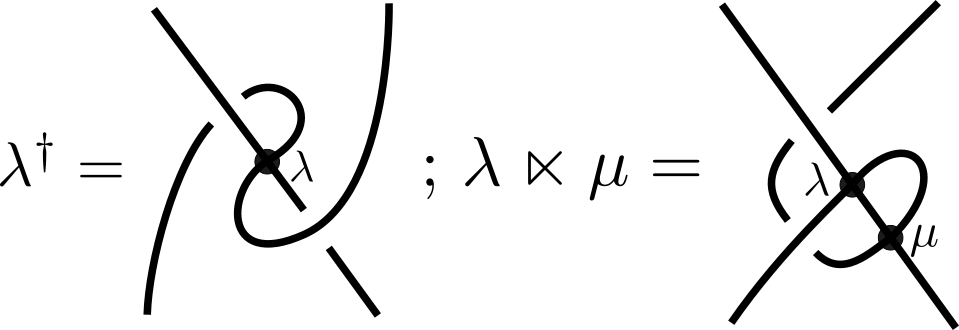}
\]
(Intuitively, $\lmb^\dagger$ is obtained from
$\lmb$ by twisting the $X$ ribbon strand by a half-twist,
and dragging the cross-strand along.)
On morphisms, they will act ``trivially" in the following sense:
it is easy to check that
\[
  \Hom_\ZZC((X,\lmb^1,\lmb^2), (Y,\mu^1,\mu^2))
  = \Hom_\ZZC(U_s(X,\lmb^1,\lmb^2), U_s(Y,\mu^1,\mu^2))
  \subseteq \Hom_\cC(X,Y)
\]
so we specify that $U_s$ acts as identity
on this vector space
(likewise for $U_t$).
$U_s,U_t$ are in fact isomorphisms,
so they have canonical inverses $U_s^\inv, U_t^\inv$.\\

Next, we need to relate the actions of two words
which are equal in $\SLZ$.
To this end, for each of the relations $r_1,r_2$,
we specify a natural isomorphism:
\begin{align*}
  \gamma_1 &: U_s^4 \xrightarrow[]{\sim} \id_\ZZC \\
  \gamma_2 &: U_s U_t U_s \xrightarrow[]{\sim} U_t^\inv U_s U_t^\inv
\end{align*}
These $\gamma$'s are given by
\begin{align*}
  (\gamma_1)_{(X,\lmb^1,\lmb^2)} &= \theta_X
    : U_s^4 (X,\lmb^1,\lmb^2)
      = (X,(\lmb^1)^{\dagger\dagger},(\lmb^2)^{\dagger\dagger})
      \to (X,\lmb^1,\lmb^2)\\
  (\gamma_2)_{(X,\lmb^1,\lmb^2)} &=
    \begin{matrix}
    \includegraphics[height=40pt]{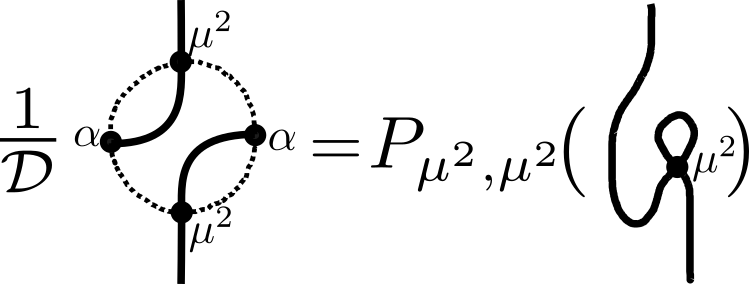}
    \end{matrix}
\end{align*}
where $\mu^2 = (\lmb^2)^\dagger$ is the second half-braiding of
$U_s U_t U_s(X,\lmb^1,\lmb^2)$,
and $\theta$ is the twist or balancing structure on $\cC$.
(Intuitively, $\theta$ is a full twist that turns
$(\lmb^1)^{\dagger\dagger}$ back to $\lmb^1$.)\\

\begin{theorem}
\label{thm:slz_action}
$U_s,U_t,\gamma_1,\gamma_2$
generate an $\SLZ$-action on $\ZZC$.
\end{theorem}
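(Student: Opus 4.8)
The plan is to invoke the framework for presenting categorical group actions recorded in Appendix~\secref{sec:group_action}. Since $\SLZ = \eval{s,t \mid r_1, r_2}$, that framework reduces the construction of a (weak) $\SLZ$-action to producing an autoequivalence for each generator, a natural isomorphism witnessing each relator, and a verification of one coherence equation for each \emph{identity among relations} of the presentation. The data $U_s, U_t, \gamma_1, \gamma_2$ supplies the first three ingredients, so the real content splits into (I) the claims that $U_s, U_t$ are genuine autoequivalences of $\ZZC$ and that $\gamma_1, \gamma_2$ are well-defined natural isomorphisms between the asserted composites, and (II) the resulting coherence equations. First I would dispense with preliminary well-definedness: one checks that $U_s$ and $U_t$ send objects to objects, i.e.\ that $((\lmb^2)^\dagger, \lmb^1)$ and $(\lmb^1, \lmb^2 \ltimes \lmb^1)$ again satisfy COMM, using the defining string-diagram identities for $\dagger$ and $\ltimes$ together with the ribbon structure, and that they are invertible with the stated inverses (so indeed lie in $\Aut(\ZZC)$).

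For (I), the well-definedness of $\gamma_1$ is the most transparent. Iterating the object-level formula for $U_s$ gives $U_s^2(X,\lmb^1,\lmb^2) = (X, (\lmb^1)^\dagger, (\lmb^2)^\dagger)$ and hence $U_s^4(X,\lmb^1,\lmb^2) = (X, (\lmb^1)^{\dagger\dagger}, (\lmb^2)^{\dagger\dagger})$, matching the target. Since $\dagger$ is a half-twist of the $X$-strand, $\dagger\dagger$ is a full twist, so the ribbon axiom shows the balancing $\theta_X$ intertwines $(\lmb^i)^{\dagger\dagger}$ with $\lmb^i$ for $i = 1,2$; thus $\theta_X$ is a morphism $U_s^4(X,\lmb^1,\lmb^2) \to (X,\lmb^1,\lmb^2)$ in $\ZZC$, and naturality of $\gamma_1$ is exactly naturality of the twist. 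For $\gamma_2$, I would compute both composites on objects, $U_s U_t U_s$ and $U_t^\inv U_s U_t^\inv$, and verify that the morphism displayed in the definition of $\gamma_2$ intertwines the two resulting pairs of half-braidings; this is a mechanical (if lengthy) diagram calculation using the definitions of $\dagger$ and $\ltimes$, COMM, and the ribbon structure, with its naturality reducing to naturality of the twist and braiding pieces out of which it is assembled.

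The crux is (II), the coherence equations, and this is the step I expect to be the genuine obstacle. Each such equation asserts that a prescribed pasting of whiskered copies of $\gamma_1$ and $\gamma_2$ equals an identity natural transformation; the equations are indexed by a generating set of the identities among $r_1, r_2$ (equivalently, by the spherical diagrams over the presentation complex). Concretely one expects a syzygy coming from the cyclic invariance of $r_1 = s^4$, which should force the centrality-type compatibility $U_s \gamma_1 = \gamma_1 U_s$ (again a consequence of naturality of $\theta$), together with a more intricate syzygy recording how $r_2$ closes up against $r_1$, reflecting derived relations such as $(st)^3 = s^2$ in $\SLZ$. Verifying these by brute-force diagram chasing is delicate, and the clean route is topological: under the expected identification of $\ZZC$ with $\Zcy(\punctorus)$ and of $\SLZ$ with the mapping class group of $\punctorus$, the generators $U_s, U_t$ are the images of the standard Dehn twists (cf.\ \rmkref{rmk:sl2z_induct_forget}), and each coherence diagram is realized by an isotopy of curves on $\punctorus$. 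For this reason I would defer the full verification of (II) to future work in the setting of the extended Crane--Yetter TQFT, as indicated in the discussion preceding the statement.
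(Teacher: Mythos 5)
Your proposal takes essentially the same approach as the paper: the paper likewise reduces the theorem to the generators-and-relations framework of Appendix \secref{sec:group_action} (autoequivalences $U_s,U_t$ for the generators, natural isomorphisms $\gamma_1,\gamma_2$ for the relations, and then the cocycle/coherence condition of \defref{def:grp_action_genrel}), and the paper itself does not carry out the coherence verification, explicitly deferring it to future work because the argument uses the topology of surfaces via the extended Crane--Yetter TQFT. Your split into well-definedness of the data versus the coherence equations, and your deferral of the latter on topological grounds, mirrors the paper's discussion exactly.
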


We will prove this in future work,
but let us briefly discuss how one generates the action,
and what is entailed in the proof of the theorem.\\

To a word $w$ in $s,t$ (and inverses $s^\inv, t^\inv$,
we associate the appropriate composition of $U$'s,
which we denote $U_w$.
Each group element $g\in \SLZ$ has many word representatives,
so the associated $U_w$'s of various word representatives
should be in some sense equal.
Being functors, they should not be expected to be equal on
the nose, but related by a natural isomorphism
- these are provided by applying $\gamma_1$ and $\gamma_2$.\\

Now the natural isomorphisms themselves should
be compatible with each other as follows.
Consider two ways to get from $s^8$ to 1:
\[
  s^8 = s^2 \cdot s^4 \cdot s^2
    \xrightarrow[]{s^2 \cdot r_1 \cdot s^2} s^2 \cdot 1 \cdot s^2 = s^4
    \xrightarrow[]{r_1} 1
\]
and
\[
  s^8 = s^4 \cdot s^4
    \xrightarrow[]{s^4 \cdot r_1} s^4 \cdot 1 = s^4
    \xrightarrow[]{r_1} 1
\]
The data in \thmref{thm:slz_action}
gives us two natural isomorphisms $U_s^4 \to \id_\ZZC$:
\[
\begin{tikzcd}
  U_s^8 \ar[r, "U_s^4 \gamma_1"] \ar[d, "U_s^2 \gamma_1 U_s^2"']
    & U_s^4 \ar[d, "\gamma_1"] \\
  U_s^4 \ar[r, "\gamma_1"]
    & \id
\end{tikzcd}
\]
and in order for the group action to be well-defined,
this diagram must commute.\\

In general, for any pair of words $w_1,w_2$,
any path of applying relations from $w_1$ to $w_2$
gives rise to a natural isomorphism,
and all paths must result in the same natural isomorphism.
In the Appendix \secref{sec:group_action},
we show that when this is satisfied,
one obtains an $\SLZ$-action on $\ZZC$
(see \prpref{prp:grp_action_genrel_functor},
\corref{cor:grp_action_genrel_usual}).\\

Once again, our proof of \thmref{thm:slz_action},
that is, checking the above condition,
uses the topology of surfaces in an essential way,
hence we find it best to present the proof in the context
of the extended Crane-Yetter TQFT,
which will appear in future work.\\

\begin{remark}
The action described above does not respect the
monoidal structure defined in \secref{sec:tensor_product}.
Instead, it intertwines the many tensor product structures
one can put on $\ZZC$,
each coming from choosing how to identify $\punctorus$
as the plumbing of two annuli (see \rmkref{rmk:pop}).
\hfill $\triangle$
\end{remark}

\begin{remark}
Later in \thmref{thm:modular}, we prove that when $\cC$ is modular,
$\ZZC \simeq \cC$, so we should have an $\SLZ$-action on $\cC$ as well.
It is not yet clear to us how to write this action on $\cC$ explicitly
without reference to $\ZZC$.
\hfill $\triangle$
\end{remark}

\section{Modular Case}
\label{sec:modular_case}

In this section, we prove \thmref{thm:modular},
which says that when $\cC$ is modular,
$\ZZC \cong \cC$.
As discussed in the introduction,
this is a natural result to expect from
the supposition that the Crane-Yetter TQFT
is a boundary theory.\\

We will need two lemmas:

\begin{lemma}
\label{lem:fully_faithful}
Let $i$ be the composition
\[
    i = \cI_1 \circ \iota : \cC \to \ZZC
\]
where $\iota: \cC \to \ZC$ is the functor $X \mapsto (X,c_{-,X})$,
and $\cI_1$ is the intermediate induction functor
defined in \prpref{prp:intermediate_adjoint_1}.
Then $i$ is fully faithful.
\end{lemma}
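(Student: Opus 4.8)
The plan is to reduce the computation of $\Hom_\ZZC(iX,iY)$ to a Hom-space in the ordinary center $\ZC$, using the two-sided adjunction $\cI_1 \dashv \cF_1$ of \prpref{prp:intermediate_adjoint_1}, and then to use modularity to cut that space down to $\Hom_\cC(X,Y)$. First I would record that $\iota$ is automatically fully faithful: since the braiding $c$ is natural, every $f\in\Hom_\cC(X,Y)$ intertwines $c_{-,X}$ with $c_{-,Y}$, so $\Hom_\cC(X,Y)=\Hom_\ZC(\iota X,\iota Y)$. Next, applying the adjunction isomorphism of \prpref{prp:intermediate_adjoint_1} with first argument $\cI_1\iota X$ gives
\[
  \Hom_\ZZC(iX,iY)=\Hom_\ZZC(\cI_1\iota X,\cI_1\iota Y)\cong\Hom_\ZC(\iota X,\cF_1\cI_1\iota Y),
\]
and under this isomorphism the map $g\mapsto i(g)=\cI_1(\iota g)$ becomes $g\mapsto \eta_{\iota Y}\circ\iota(g)$, where $\eta\colon\id_\ZC\to\cF_1\cI_1$ is the unit. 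Thus $i$ is fully faithful precisely when postcomposition with $\eta_{\iota Y}$ induces an isomorphism $\Hom_\ZC(\iota X,\iota Y)\to\Hom_\ZC(\iota X,\cF_1\cI_1\iota Y)$.

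The second step is to unwind $\cF_1\cI_1\iota Y$. By \prpref{prp:intermediate_adjoint_1} it is the object $(\dirsum_i X_iYX_i^*,\wdtld{c_{-,Y}})$ of $\ZC$, where we keep the second half-braiding $\wdtld{c_{-,Y}}$ and discard the induction half-braiding $\Gamma$. Unlike $\Gamma$, which fuses $A$ into the struts and so mixes summands, $\wdtld{\lmb}$ merely braids $A$ past the struts $X_i,X_i^*$ and applies $\lmb$ in the middle; hence each $X_iYX_i^*$ is $\wdtld{c_{-,Y}}$-stable and
\[
  \cF_1\cI_1\iota Y=\dirsum_i\,(X_iYX_i^*,\wdtld{c_{-,Y}}^{(i)})\quad\text{in }\ZC.
\]
The $i=0$ summand is $(Y,\wdtld{c_{-,Y}}^{(0)})=(Y,c_{-,Y})=\iota Y$, and a short check identifies $\eta_{\iota Y}$ with the (coevaluation) inclusion of this summand. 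So the postcomposition map above is an isomorphism as soon as the remaining summands contribute nothing, i.e. $\Hom_\ZC(\iota X,(X_iYX_i^*,\wdtld{c_{-,Y}}^{(i)}))=0$ for $i\neq 0$.

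This vanishing is the crux, and it is here that modularity enters through \prpref{prp:modular_factorizable}. Under the braided equivalence $\ZC\simeq\cC\boxtimes\cC^\bop$ we have $\iota X\cong X\boxtimes\one$, so $\Hom_\ZC(\iota X,-)$ computes the multiplicity of the $\boxtimes\one$-isotypic part. I would determine the $\cC\boxtimes\cC^\bop$-type of $(X_iYX_i^*,\wdtld{c_{-,Y}}^{(i)})$ by reading off the braiding convention of $\wdtld{c}$ — the middle $Y$ and one strut are braided by $c$, while the remaining strut is braided oppositely — so that the oppositely-braided strut forces a $\cC^\bop$-component $X_i^*$. Since $\Hom_\cC(\one,X_i^*)=\delta_{i0}\kk$, this summand has no $\boxtimes\one$-part for $i\neq0$, giving the vanishing, while the $i=0$ summand returns exactly $\Hom_\cC(X,Y)$; combined with full faithfulness of $\iota$, the composite $\Hom_\cC(X,Y)\to\Hom_\ZZC(iX,iY)$ is a bijection. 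The main obstacle I anticipate is precisely this last step: pinning down the sign/braiding pattern of $\wdtld{c}$ well enough to identify the $\cC\boxtimes\cC^\bop$-type of each strut-summand — equivalently, running the nondegeneracy (encircling) argument directly on the projector $P_{c_{-,X},\wdtld{c}^{(i)}}$ of \lemref{lem:projection} — whereas the adjunction bookkeeping and the reduction to off-diagonal vanishing are routine.
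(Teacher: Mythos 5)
Your proposal is correct, but it takes a genuinely different route from the paper. The paper splits the statement in two: faithfulness is the same easy injectivity observation you make implicitly, but for fullness it works entirely inside the diagrammatic calculus — it writes $\Hom_\ZZC(iX,iY)$ as the intersection $\Hom_\ZC(\cI X,\cI Y)\cap\Hom_\ZC((\induct{X},\Omega),(\induct{Y},\Omega))$, invokes \corref{cor:hom_IX_IY} to put every element of the first space in the normal form where both struts carry the \emph{same} simple label $k$, and then applies the projection $P_{\Omega,\Omega}$; modularity enters through charge conservation (\lemref{lem:charge_conservation}), which kills every term with $k\neq 0$, leaving exactly the morphisms $i(f)$. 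You instead use the two-sided adjunction of \prpref{prp:intermediate_adjoint_1} to transport the problem to $\Hom_\ZC(\iota X,\cF_1\cI_1\iota Y)$, decompose $\cF_1\cI_1\iota Y=\dirsum_i(X_iYX_i^*,\wdtld{c_{-,Y}}^{(i)})$ in $\ZC$ (valid, since $\wdtld{\lmb}$, unlike $\Gamma$, preserves the strut label), and kill the $i\neq 0$ summands via the factorization $\ZC\simeq\cC\boxtimes\cC^\bop$ of \prpref{prp:modular_factorizable} — the tool the paper reserves for \lemref{lem:I1_kills_braiding}, not for this lemma. Your flagged worry about the braiding convention of $\wdtld{c}$ resolves favorably and is in fact the same geometric fact powering both proofs: $\wdtld{\lmb}$ passes the cross-strand \emph{over} one strut and \emph{under} the other, which in the paper's proof is what makes the encircling loop of $P_{\Omega,\Omega}$ genuinely link the strut (so charge conservation applies), and in yours is what places one strut in the $\cC^\bop$ factor, forcing $\Hom_\cC(\one,X_i^*)=0$ for $i\neq 0$; note also that either convention yields the vanishing, since in $\ZC$ one may commute the factors before factorizing. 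What your route buys is a more structural argument that avoids \corref{cor:hom_IX_IY} and the explicit projector computation, at the cost of leaning on the heavier Etingof--Nikshych--Ostrik factorization theorem and of two bookkeeping checks the paper never needs: identifying the unit $\eta_{\iota Y}$ with the inclusion of the $i=0$ summand (this follows from faithfulness of $\cI_1$ plus your own vanishing, so it is indeed "short"), and verifying that the adjunction transports $g\mapsto i(g)$ to $g\mapsto\eta_{\iota Y}\circ\iota(g)$, which is standard naturality of the unit.
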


\begin{proof}
The functor $i$ sends $X \mapsto (\induct{X}, \Gamma, \Omega)$,
where $\Omega = \wdtld{c_{-,X}}$
(the $\Omega$ in Definition-Proposition \ref{def:ZZC_tnsr}
is for $X = \one$),
and on morphisms, $i$ sends $f:X \to Y$ to
$i(f) = \sum_j \id_{X_j} \tnsr f \tnsr \id_{X_j^*}$.\\

Faithfulness follows without modularity of $\cC$:
indeed, the way $i$ is defined means the following diagram commutes:\\
\[
\begin{tikzcd}
  \Hom_\ZZC (iX, iY) \ar[r, hookrightarrow] & \Hom_\cC (\induct{X}, \induct{Y})\\
  \Hom_\cC (X,Y) \ar[u, "i"] \ar[ur, hookrightarrow]
\end{tikzcd}
\]
so $i$ must be an injection.\\

To show that $i$ is full,
let us study $\Hom_\ZZC (iX, iY)$ as the intersection
$\Hom_\ZC ((\induct{X}, \Gamma), (\induct{Y}, \Gamma))
\cap \Hom_\ZC((\induct{X}, \Omega), (\induct{Y}, \Omega))$.
The first space is just $\Hom_\ZC(\cI X, \cI Y)$,
so it is of the form seen in \corref{cor:hom_IX_IY}.\\

Further intersecting with the second space,
or equivalently finding the image of the first space
under the projection $P_{\Omega,\Omega}$,
\[
  \includegraphics[width=0.9\textwidth]{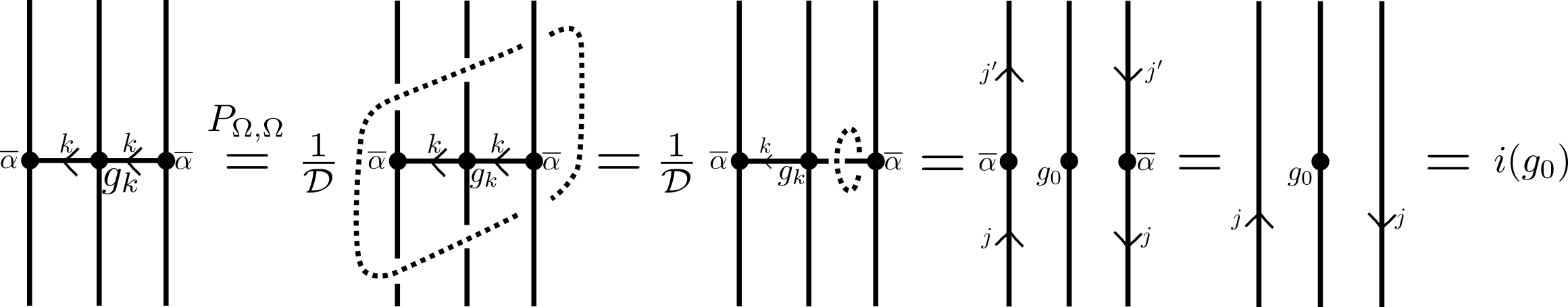}
\]
so all morphisms in $\Hom_\ZZC(iX,iY)$ are indeed of
the form $i(f)$, $f\in \Hom_\cC(X,Y)$,
hence $i$ is full.
This computation depends on the modularity of $\cC$,
specifically in the third equality,
where we use \lemref{lem:charge_conservation}.
Note that it was crucial that in the first step we deduced from
\corref{cor:hom_IX_IY} that the ``struts"
between the middle strand and the outer strands are labelled by
the same thing, so that killing one also kills the other.\\

\end{proof}

\begin{lemma}
\label{lem:I1_kills_braiding}
If $\cC$ is modular,
then for any object $(Y,\mu) \in \ZC$,
\[
  \cI_1(Y,\mu) \cong iY
\]
\end{lemma}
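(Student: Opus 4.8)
The plan is to upgrade the statement to a natural isomorphism of functors $\cI_1 \cong i \circ \cF \colon \ZC \to \ZZC$, where $\cF\colon\ZC\to\cC$ is the forgetful functor; since $\cF(Y,\mu)=Y$, evaluating this isomorphism at $(Y,\mu)$ recovers exactly $\cI_1(Y,\mu)\cong iY$. Both functors are monoidal: $\cI_1$ is monoidal by \prpref{prp:I1_tensor}, while $i\circ\cF$ is monoidal because $\cF$ is a tensor functor and $i=\cI_1\circ\iota$ is a composite of monoidal functors (recall $\iota$ is braided monoidal). This is where modularity enters through \prpref{prp:modular_factorizable}: it identifies $\ZC\simeq\cC\boxtimes\cC^\bop$ with the two braided embeddings $\iota\colon\cC\to\ZC$, $U\mapsto(U,c_{-,U})$ and $\bar\iota\colon\cC^\bop\to\ZC$, $V\mapsto(V,c_{V,-}^\inv)$, whose images generate $\ZC$ under $\tnsr$ and direct sums. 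Because $\ZC$ is semisimple and every simple object has the form $\iota U\tnsr\bar\iota V$, a monoidal natural isomorphism between two monoidal functors on $\ZC$ is determined by, and can be assembled from, monoidal natural isomorphisms on the two factors $\iota(\cC)$ and $\bar\iota(\cC^\bop)$.

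On the first factor the two functors already agree on the nose: $\cI_1(\iota U)=iU$ by the very definition of $i$, and $(i\circ\cF)(\iota U)=i(\cF\iota U)=iU$, so I would take the identity there. The whole lemma therefore reduces to producing a natural isomorphism on the second factor, namely
\[
  \cI_1(\bar\iota V)\;=\;\cI_1(V,c_{V,-}^\inv)\;\cong\;\cI_1(V,c_{-,V})\;=\;iV,
\]
naturally in $V$. Both sides have the same underlying object $\induct V$ and the same first half-braiding $\Gamma$ coming from ordinary induction $\cI$ (see \rmkref{rmk:Gamma_1}); they differ only in the second half-braiding. Unwinding \prpref{prp:intermediate_adjoint_1}, $\wdtld{c_{-,V}}$ is simply the ambient braiding $c_{-,\induct V}$ (the cross-strand passes over every factor), whereas $\wdtld{c_{V,-}^\inv}$ passes over the $X_i$-struts but under the middle $V$-strand.

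Thus the crux is to construct an isomorphism $\phi\in\Aut_\ZC(\cI V)$ — which, being an endomorphism of $(\induct V,\Gamma)$, automatically respects the first half-braiding — that additionally intertwines $\wdtld{c_{V,-}^\inv}$ with $\wdtld{c_{-,V}}$, i.e. converts the under-crossing of the $V$-strand into an over-crossing. I would build $\phi$ from the morphisms described in \corref{cor:hom_IX_IY} by dragging the $V$-strand once around the induction loop, so that the double braiding of the ambient strand with $V$ is absorbed into the $X_i$-struts. Verifying that this move exchanges the two half-braidings exactly, and that the resulting $\phi$ is invertible, is the main obstacle; I expect it to rest on the non-degeneracy of $\cC$ in the guise of the charge-conservation identity \lemref{lem:charge_conservation}, precisely as in the proof of \lemref{lem:fully_faithful} — this is the second place modularity is indispensable, since in a degenerate braided category the $X_i$-loops cannot detect and undo the reversed crossing. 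Once $\phi$ is shown to be a well-defined isomorphism, naturality in $V$ and compatibility with the tensorator are routine, completing the assembly described in the first paragraph.
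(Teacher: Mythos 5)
Your overall skeleton is sound, and it is essentially a repackaging of the paper's own argument: both proofs rest entirely on the factorization $\ZC\simeq\cC\boxtimes\cC^\bop$ of \prpref{prp:modular_factorizable}, and both produce the isomorphism by wrapping a strand once around the induction loop. The organizational difference is that the paper takes a general $(Y,\mu)\cong\dirsum_k(A_k,c_{-,A_k})\tnsr(B_k,c_{B_k,-}^\inv)$ and writes down explicit mutually inverse morphisms $\wdtld{\vphi},\wdtld{\psi}$ in $\ZZC$ in one stroke, whereas you use the monoidality of $\cI_1$ (\prpref{prp:I1_tensor}) and semisimplicity to isolate all the work in the single case $\cI_1(\bar\iota V)\cong iV$. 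That reduction is legitimate (for the object-wise statement of the lemma, additivity of $\cI_1$, \prpref{prp:I1_tensor}, and monoidality of $i=\cI_1\circ\iota$ are all you need; the full ``monoidal natural isomorphism $\cI_1\cong i\circ\cF$'' packaging is more than the lemma requires and its coherence is itself unchecked), and it is a genuinely tidy way to cut the problem down.

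The genuine gap is that the step you defer as ``the main obstacle'' \emph{is} the lemma: constructing $\phi$, verifying that it intertwines $\Gamma$ with $\Gamma$ and $\wdtld{c_{V,-}^\inv}$ with $\wdtld{c_{-,V}}$, and verifying invertibility is precisely the content of the paper's proof (the diagrams $\wdtld{\vphi},\wdtld{\psi}$, the check that they are $\ZZC$-morphisms via properties of $\overline{\alpha}$ and the form of the half-braidings, and the wall-swapping argument that they are mutually inverse); everything else in your proposal is bookkeeping around it. Moreover, your two guesses about how that verification goes are both off. First, no charge conservation --- indeed no modularity beyond the factorization --- is needed there: the paper's check uses only properties of $\overline{\alpha}$ and the fact that $\Gamma$ is a half-braiding, both valid in any premodular category, so that $\cI_1(\iota A\tnsr\bar\iota B)\cong i(A\tnsr B)$ holds premodularly; modularity is indispensable exactly once, to know that \emph{every} $(Y,\mu)$ is of factorized form. (In a degenerate category your heuristic misplaces the failure: there $\bar\iota V$ may coincide with $\iota V$, so there is no crossing to undo, and the lemma fails because the factorization fails.) Second, your identification $\wdtld{c_{-,V}}=c_{-,\induct{V}}$ (cross-strand over every factor) cannot be correct: if it were, then $(\induct{V},\wdtld{c_{-,V}})$ would equal $\iota(\induct{V})$, and the adjunction of \prpref{prp:intermediate_adjoint_1} together with the factorization would give
\[
  \Hom_\ZZC(\cI_1(\bar\iota V),\, iV)
  \;\cong\;
  \Hom_\ZC\bigl(\bar\iota V,\,\iota(\induct{V})\bigr)
  \;=\;0
\]
for simple $V\not\cong\one$, since $\Hom_\cC(V,\one)=0$; the isomorphism you are trying to build could not exist. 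The cross-strand must pass the two struts with \emph{opposite} crossings. Fortunately $\wdtld{c_{-,V}}$ and $\wdtld{c_{V,-}^\inv}$ share whatever the strut convention is and differ only at the middle strand, so your reduction survives --- but this is a further sign that the diagrammatic core, where the lemma actually lives, remains to be done.
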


\noindent
Observe that the lemma implies
$\cI_1(Y,\mu) \cong iY = \cI_1(Y, c_{-,Y})$,
so this lemma is saying that when $\cC$ is modular,
$\cI_1$ kills the difference in braiding in $\ZC$.\\

\begin{proof}
By \prpref{prp:modular_factorizable},
we have
$\ZC \simeq_{\tnsr, \text{br}} \cC \boxtimes \cC^\bop$,
so we may write
\[
  (Y,\mu) \overset{\vphi'}{\cong} \dirsum_k (A_k,c_{-,A_k}) \tnsr (B_k,c_{B_k,-}^\inv)
  = (\dirsum_k A_k B_k, \sigma)
\]
where
\[
  \includegraphics[height=2cm]{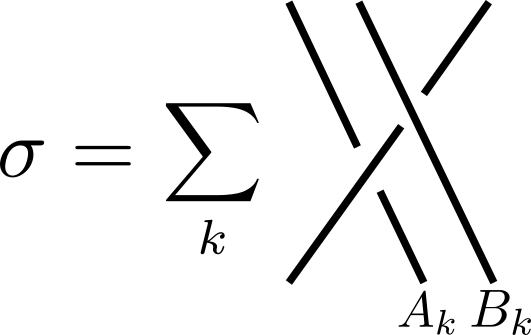}
\]

Let $\vphi = \sum_k \vphi_k: Y \to \dirsum_k B_k A_k$
be an isomorphism in $\cC$
(say $\vphi = c \circ \vphi'$),
and let $\psi = \sum_k \psi_k :\dirsum_k B_k A_k \to Y$
be its inverse.
Consider the morphisms
$\wdtld{\vphi} : \induct{Y} \to \induct{(\dirsum_k A_k B_k)}$
and
$\wdtld{\psi} : \induct{(\dirsum_k A_k B_k)} \to \induct{Y}$
described below:
\[
  \includegraphics[height=90pt]{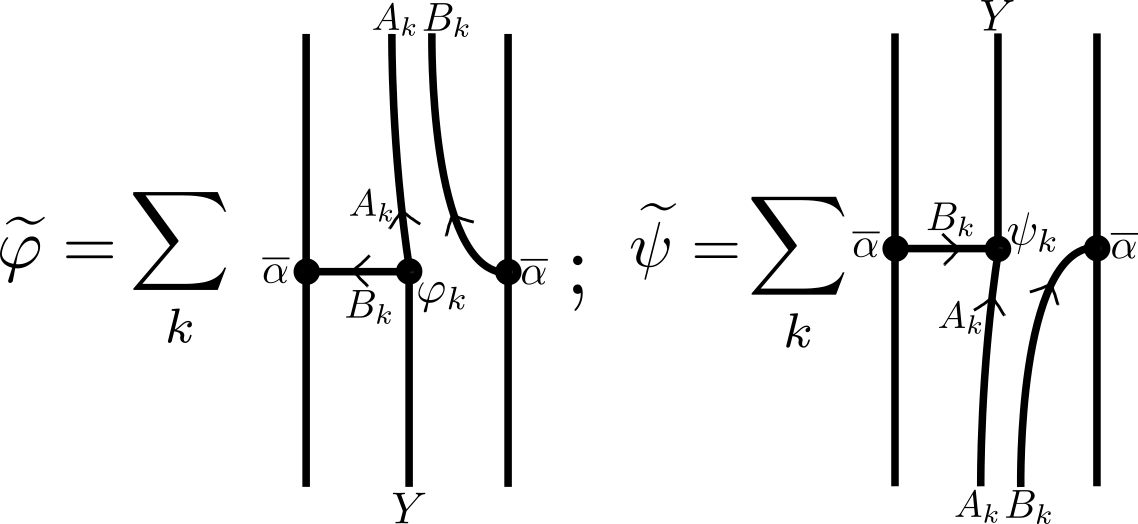}
\]

These are in fact morphisms in $\ZZC$,
that is,
\begin{align*}
  \wdtld{\vphi} &\in \Hom_\ZZC(iY, \cI_1(\dirsum_k A_k B_k, \sigma))\\
  \wdtld{\psi} &\in \Hom_\ZZC(\cI_1(\dirsum_k A_k B_k, \sigma), iY)
\end{align*}

For the first braiding, it follows from properties of $\overline{\alpha}$,
(see proof of \lemref{lem:Gamma_hf_brd}),
while for the second braiding, it is apparent from the diagrams.\\

Finally, we see that $\wdtld{\vphi}$ and $\wdtld{\psi}$ are inverses:
\[
  \includegraphics[height=90pt]{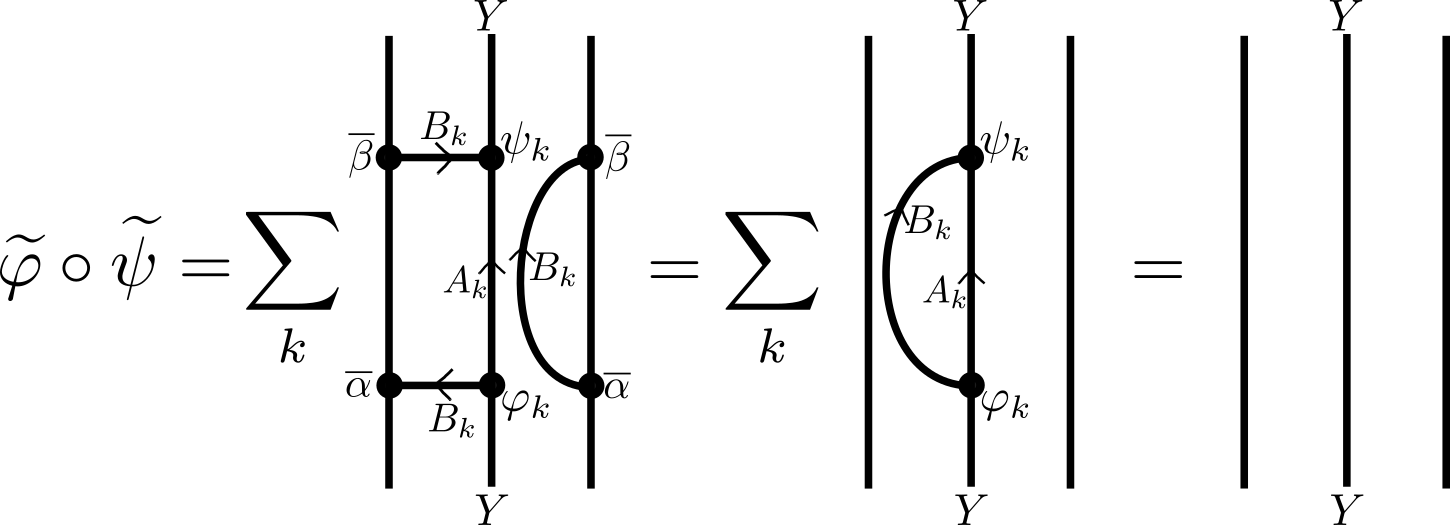}
\]
(the first equality follows from the fact that $\Gamma$ is a half-braiding
  - we just swapped the relative positions of the ``walls"),
and the other way is similar.\\

Thus $\cI_1(Y,\mu) = \cI_1(\dirsum_k A_k B_k, \sigma)
\overset{\wdtld{\psi}}{\cong} iY$.

\end{proof}

Now we can prove:

\begin{theorem}
\label{thm:modular}
  If $\cC$ is modular, then the composition
  \[
    i = \cI_1 \circ \iota : \cC \to \ZZC
  \]
  is an equivalence of abelian categories,
  where $\iota: \cC \to \ZC$ is the functor $X \mapsto (X,c_{-,X})$,
  and $\cI_1$ is the intermediate induction functor
  defined in \prpref{prp:intermediate_adjoint_1}.
\end{theorem}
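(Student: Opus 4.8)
The plan is to deduce the theorem formally from the two preceding lemmas together with the Karoubian description of $\ZZC$. By \lemref{lem:fully_faithful} the functor $i$ is already fully faithful (and this is precisely where modularity of $\cC$ enters, through \lemref{lem:charge_conservation}), so the only thing left to establish is that $i$ is essentially surjective. Once essential surjectivity is in hand, $i$ is an equivalence of categories; since $i$ is additive and both $\cC$ and $\ZZC$ are abelian, it is automatically an equivalence of abelian categories.

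First I would reduce essential surjectivity to a statement about the essential image of $i$. By \prpref{prp:elliptic_direct_summand}, every object $(X,\lmb^1,\lmb^2)\in\ZZC$ is a direct summand of $\cII X$. Using the factorization $\cII = \cI_1\circ\cI$ from \prpref{prp:intermediate_adjoint_1}, we have $\cII X = \cI_1(\cI X)$, and \lemref{lem:I1_kills_braiding} gives $\cI_1(\cI X)\cong i(\cF\cI X)$, where $\cF\cI X = \dirsum_j X_j X X_j^*$. Thus $\cII X$ itself lies in the essential image of $i$, and consequently every object of $\ZZC$ is a direct summand of an object in the essential image of $i$.

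It then remains to show that the essential image of $i$ is closed under direct summands. A summand of $iZ$ is the image of an idempotent $e\in\End_\ZZC(iZ)$; by fullness $e = i(f)$ for some $f\in\End_\cC(Z)$, and faithfulness of $i$ forces $f^2 = f$ from $e^2 = e$. Since $\cC$ is semisimple, hence idempotent-complete, $f$ splits as $Z\cong Z_1\oplus Z_2$ with $f$ the projection onto $Z_1$; applying the additive functor $i$ identifies the image of $e$ with $iZ_1$. Combining this with the previous paragraph, every object of $\ZZC$ is isomorphic to some $iZ$, so $i$ is essentially surjective and therefore an equivalence.

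The genuine content of the theorem is carried entirely by the two lemmas: \lemref{lem:fully_faithful} for full faithfulness and \lemref{lem:I1_kills_braiding} for the fact that, under modularity, $\cI_1$ collapses the difference between half-braidings coming from $\cC$ and $\cC^\bop$. Given those inputs, the argument above is a formal consequence of the Karoubian presentation of $\ZZC$ and idempotent-completeness of $\cC$, so I expect no serious obstacle at this stage; the only point requiring mild care is bookkeeping of underlying objects when passing the isomorphism $\cI_1(\cI X)\cong i(\cF\cI X)$ through the chain of (two-sided) adjoints $\cF,\cI,\cI_1$.
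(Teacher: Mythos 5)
Your proposal is correct and follows essentially the same route as the paper's proof: full faithfulness via \lemref{lem:fully_faithful}, then essential surjectivity by combining \prpref{prp:elliptic_direct_summand} with \lemref{lem:I1_kills_braiding} applied to $(Y,\mu)=\cI X$, and finally transporting idempotents along the fully faithful $i$ to split off summands in $\cC$. The paper's closing argument is exactly your ``essential image is closed under direct summands'' step, phrased there both concretely via the projection $Q_A = i(q_A)$ and abstractly via Karoubian completions.
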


\begin{proof}
Recall that
$i$ sends $X \mapsto (\induct{X}, \Gamma, \Omega)$,
where $\Omega = \wdtld{c_{-,X}}$,
and on morphisms, $i$ sends $f:X \to Y$ to
$i(f) = \sum_j \id_{X_j} \tnsr f \tnsr \id_{X_j^*}$.\\

By \lemref{lem:fully_faithful}, $i$ is fully faithful.
It remains to show that $i$ is essentially surjective.
Observe that \lemref{lem:I1_kills_braiding} implies that
any object of the form $\cII X$ is isomorphic to some object
$iX'$, $X'\in \cC$ -
just take $(Y,\mu)$ in \lemref{lem:I1_kills_braiding}
to be $(\induct{X}, \Gamma) = \cI X$
(see \prpref{prp:adjoint}), so that
\[
  \cII X = \cI_1(\cI X) = \cI_1(Y,\mu) \cong iY
\]

By \prpref{prp:elliptic_direct_summand},
any object $A$ of $\ZZC$ is a
direct summand of some $\cII X$,
hence by the above observation,
$A$ is a direct summand of some $iX' \cong \cII X$.
But by \lemref{lem:fully_faithful}, $i$ is fully faithful, so if $Q_A$ is a projection
in $\End_\ZZC(iX)$ such that $\im(Q_A) \cong A$, then $Q_A = i(q_A)$ for some
projection $q_A \in \End_\cC(X)$, so $i(\im(q_A)) \cong \im(Q_A) \cong A$.
Hence, $i$ is essentially surjective,
and we are done.
\end{proof}

The last paragraph in the proof above can be phrased
in more abstract but conceptually clearer terms,
encapsulated in the following diagram:
\[
\begin{tikzcd}
  \mathcal{E} \ar[r, hookrightarrow, "\textrm{f.f.}"] \ar[d, hookrightarrow]
    & \mathcal{D} \ar[r, hookrightarrow,  "\textrm{f.f.}"] \ar[d, equal]
    & \ZZC\\
  \overline{\mathcal{E}}^\oplus \ar[r, "\textrm{f.f.}"] \ar[urr, equal]
    & \overline{\mathcal{D}}^\oplus \ar[ur, hookrightarrow]
\end{tikzcd}
\]
where $\mathcal{E}$ is the full image of $\cII: \cC \to \ZZC$,
$\mathcal{D}$ is the essential image of $i$,
and f.f. stands for fully faithful.
Let us explain this diagram and its relation to the proof above.
The observation that any object $\cII X$
is isomorphic to some $iX'$ can be restated as
the fact that the full image $\mathcal{E}$
of $\cII$ is contained in the essential image
$\mathcal{D}$ of $i$.
It follows that the Karoubian completion of $\mathcal{D}$
(which can be thought of as a subcategory of $\ZZC$ since $\ZZC$ is abelian)
contains the Karoubian completion of $\mathcal{E}$.
By \lemref{lem:fully_faithful},
$\mathcal{D}$ is a full subcategory of $\ZZC$,
so the Karoubian completion of $\mathcal{D}$ is $\mathcal{D}$ itself.
By \prpref{prp:elliptic_direct_summand}, any object in $\ZZC$
is a direct summand of some $\cII X$,
hence the Karoubian completion of
$\mathcal{E}$ is the entire $\ZZC$.
So $\ZZC = \overline{\mathcal{E}}^\oplus
  \subseteq \overline{\mathcal{D}}^\oplus
  \subseteq \ZZC$,
hence $\mathcal{D} = \overline{\mathcal{D}}^\oplus = \ZZC$.
In other words, $i$ is essentially surjective.\\

By \prpref{prp:I1_tensor}, $\cI_1$ is a monoidal functor,
and $\iota$ is also monoidal in a natural way.
Thus,

\begin{corollary}
If $\cC$ is modular, then
$(i,j) = (\cI_1,J)\circ \iota : \cC \to \ZZC$ is a monoidal equivalence.
\end{corollary}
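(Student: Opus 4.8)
The plan is to assemble the corollary from three facts already in hand: that $\cI_1$ is monoidal with constraint $J$ (\prpref{prp:I1_tensor}), that $\iota$ is monoidal, and that $i = \cI_1 \circ \iota$ is an equivalence of abelian categories (\thmref{thm:modular}). Since a composite of monoidal functors is again monoidal, $(i,j) := (\cI_1,J)\circ \iota$ is a monoidal functor for a suitable constraint $j$; and a monoidal functor whose underlying functor is an equivalence is automatically a \emph{monoidal} equivalence. So the corollary follows once $\iota$ is seen to be monoidal and the general promotion principle is invoked.

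First I would verify that $\iota : X \mapsto (X, c_{-,X})$ is strong monoidal (indeed braided), as was already used in \prpref{prp:modular_factorizable}. The key point is that its tensor constraint is the identity: unwinding the formula for the tensor product of half-braidings from the proof of \prpref{prp:ZC_modular}, the half-braiding on $\iota(X)\tnsr \iota(Y)$ evaluated at $A$ is $(\id_X \tnsr c_{A,Y})\circ(c_{A,X}\tnsr \id_Y)$, which by the hexagon axiom for the braiding of $\cC$ equals $c_{A, X\tnsr Y}$, the half-braiding of $\iota(X\tnsr Y)$. Together with $c_{A,\one} = \id_A$, this gives $\iota(X)\tnsr \iota(Y) = \iota(X\tnsr Y)$ and $\iota(\one) = (\one, \id_-) = \one_{\ZC}$ on the nose. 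Hence $\iota$ carries the trivial constraint, and the composite constraint on $i$ is simply $j_{X,Y} = J_{(X,c_{-,X}),(Y,c_{-,Y})}$, the morphism of \prpref{prp:I1_tensor} specialized to objects in the image of $\iota$. Its coherence with the associativity and unit constraints of \defref{def:ZZC_tnsr} is inherited verbatim from the hexagon and unit coherences already established for $J$.

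It then remains only to upgrade the equivalence of \thmref{thm:modular} to a monoidal one. Here I would invoke the standard fact (e.g. \ocite{EGNO}*{Remark 2.4.10}) that a monoidal functor which is an equivalence of underlying categories is a monoidal equivalence: a quasi-inverse of $i$ acquires a canonical monoidal structure from $j$ and the unit/counit of the adjoint equivalence, and these unit/counit isomorphisms are then monoidal natural isomorphisms. Since $(i,j)$ is monoidal by the previous paragraph and $i$ is an equivalence by \thmref{thm:modular}, this applies directly.

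There is no substantial obstacle here — the statement is essentially a bookkeeping assembly of \prpref{prp:I1_tensor}, \thmref{thm:modular}, and the monoidality of $\iota$. The only genuine (but routine) verification is that $\iota$ is monoidal, handled by the hexagon axiom as above; the passage from an equivalence to a monoidal equivalence is formal. If one wanted to be fully explicit rather than cite \ocite{EGNO}, the mild extra task would be to transport $j$ along the chosen quasi-inverse and check the two triangle/coherence conditions, which follow mechanically from naturality of $j$ and the adjunction identities.
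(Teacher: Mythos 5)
Your proposal is correct and takes essentially the same route as the paper, which obtains the corollary by exactly this assembly: $\cI_1$ is monoidal by \prpref{prp:I1_tensor}, $\iota$ is monoidal in a natural way, and $i$ is an equivalence by \thmref{thm:modular}. The paper leaves the verification that $\iota$ is (strictly) monoidal and the formal promotion of an equivalence to a monoidal equivalence implicit; your hexagon-axiom check and the citation of the standard promotion principle supply precisely those routine details.
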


\subsection{Connection to Reshetikhin-Turaev Central Charge Anomaly}
\label{sec:modularRT}

One manifestation of the central charge anomaly in 
Reshetikhin-Turaev theory at $\cC$, $Z_{\textrm{RT};\cC}$,
already discussed in Witten's work \ocite{W},
is that one has a projective action of the 
mapping class group of a closed surface $\Sigma$
on $Z_{\textrm{RT},\cC}(\Sigma)$,
and the deviation from being an honest (i.e. not projective)
action is known as the central charge anomaly.
In particular, for the torus $\Sigma = \torus$,
we have $Z_{\textrm{RT},\cC}(\Sigma) = \Hom_\cC(\one, \induct{})$,
and there are morphisms $\xi_s,\xi_t$ in $\End_\cC(\induct{})$
so that post-composing with them gives
an action of $\wdtld{\SLZ}$ on $\Hom_\cC(\one, \induct{})$,
but only factors through a projective action
of $\SLZ$ on $\Hom_\cC(\one, \induct{})$.
One also gets projective actions on $\Hom_\cC(U, \induct{})$
for simple $U$ (see e.g. \ocite{BakK}*{Section 3.1}).\\

We recover this projective representation in $\ZZC$,
in part from the $\SLZ$-action on $\ZZC$ as defined in
\secref{sec:sl2z}.
Consider the $\Hom$ space
\[
  \Hom_\ZZC(i\one, \cII\one)
\]
By \lemref{lem:I1_kills_braiding},
$\cII\one = \cI_1(\cI \one) \cong \cI_1(\iota \induct{}) = i(\induct{})$,
so
\[
  \Hom_\ZZC(i\one, \cII\one)
  \cong \Hom_\ZZC(i\one, i(\induct{}))
  \cong \Hom_\cC(\one, \induct{})
\]
where the second equality follows from \thmref{thm:modular}
(or just \lemref{lem:fully_faithful}).\\

So we want to describe a projective $\SLZ$-action on 
$\Hom_\ZZC(i\one, \cII\one)$. To this end,
we put a projectively-$\SLZ$-equivariant structure
on $i\one$,
and an $\SLZ$-equivariant structure on $\cII\one$.\\

For $\cII\one$, let
\begin{align*}
  \nu_s : U_s(\cII\one) \to \cII\one \\
  \nu_t : U_t(\cII\one) \to \cII\one
\end{align*}
be the isomorphisms
\[
  \picmid{\includegraphics[height=60pt]{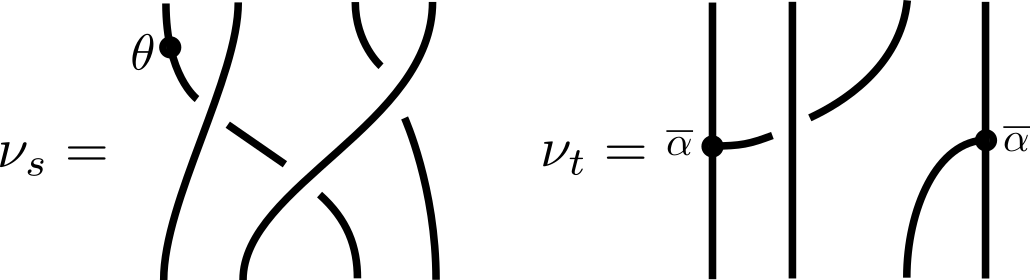}}
\]
For $i\one$, let
\begin{align*}
  \mu_s : U_s(i\one) \to i\one \\
  \mu_t : U_t(i\one) \to i\one
\end{align*}
be the isomorphisms
\[
  \picmid{\includegraphics[height=60pt]{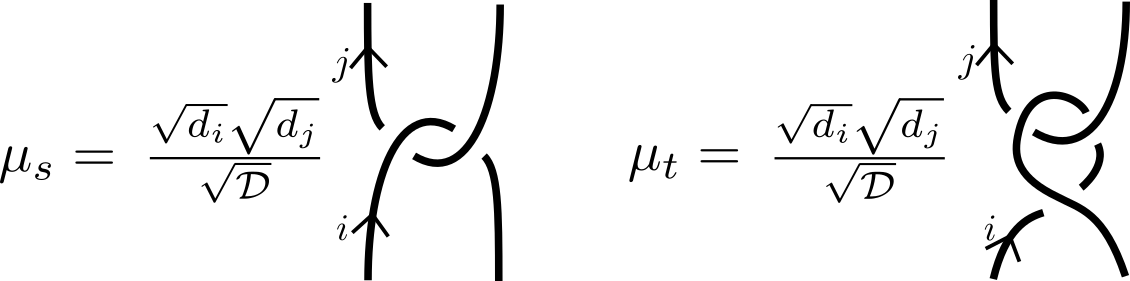}}
\]

\begin{proposition}
\label{prp:eqvrt_obj}
With respect to the $\SLZ$-action on $\ZZC$
defined in \secref{sec:sl2z},
$\nu_s,\nu_t$ define an $\SLZ$-equivariant structure
on $\cII \one$,
and $\mu_s,\mu_t$ define a projectively-$\SLZ$-equivariant structure
on $i\one$.
\end{proposition}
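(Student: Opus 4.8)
The plan is to verify both structures inside the abstract framework for actions given by generators and relations developed in Appendix \secref{sec:group_action}. There a (projective) $\SLZ$-equivariant structure on an object $W \in \ZZC$ is precisely a pair of isomorphisms $\nu_s \colon U_s W \to W$ and $\nu_t \colon U_t W \to W$ such that each defining relation $r_1, r_2$ yields a commuting coherence diagram --- built by iterating $\nu_s,\nu_t$ together with the induced inverse maps and inserting the relation-isomorphisms $\gamma_1,\gamma_2$ of \thmref{thm:slz_action} --- with equality holding on the nose in the honest case and up to a nonzero scalar in the projective case. So the proof reduces to two parallel computations, one for $\cII\one$ with $(\nu_s,\nu_t)$ and one for $i\one = \oneel$ with $(\mu_s,\mu_t)$, each checking the $r_1$- and $r_2$-diagrams; in the projective case one finally confirms that the resulting scalars are mutually consistent, i.e.\ define a genuine $2$-cocycle.

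For $r_1 \colon s^4 = 1$, since $\gamma_1 = \theta$ the condition on $W$ reads $\nu_s \circ U_s(\nu_s) \circ U_s^2(\nu_s) \circ U_s^3(\nu_s) = \theta_W$ as morphisms $U_s^4 W \to W$, where $U_s^4 W$ is $W$ with each half-braiding conjugated by a full twist. For $W = \cII\one$ I would expand the fourfold composite as a string diagram; the successive half-twists $\dagger$ introduced by the $U_s$'s combine into exactly the full twist that $\theta$ removes, so the equality holds with no extra factor. For $W = i\one$, whose underlying object $\induct{}$ carries only a single pair of ``walls'', the analogous manipulation cannot realize the twist by an isotopy and instead leaves a closed loop to be evaluated, producing a Gauss-sum scalar built from $\sum_j d_j^2 \theta_{X_j}^{\pm 1}$ and a normalization $\sqrt{\cD}$.

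For $r_2 \colon sts = t^\inv s t^\inv$ I would use the explicit diagram for $\gamma_2$ preceding \thmref{thm:slz_action}: one assembles the composite $U_s U_t U_s W \to W$ from $\nu_s,\nu_t$ and compares it, through $(\gamma_2)_W$, with the composite $U_t^\inv U_s U_t^\inv W \to W$ assembled from the induced inverse structure maps. For $\cII\one$ the two sides should agree after the standard ribbon moves --- naturality of $c_{-,-}$, the half-braiding axiom for $\Gamma^1,\Gamma^2$, and the defining property of $\theta$ --- giving honest equivariance, while for $i\one$ they again differ by a scalar of the same Gauss-sum type, which is the $(ST)^3$ manifestation of the Reshetikhin--Turaev central charge. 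That the relation-scalars for $i\one$ are nonzero follows from the modularity identity $p^+ p^- = \cD \neq 0$ for the Gauss sums $p^\pm = \sum_j d_j^2 \theta_{X_j}^{\pm 1}$, and their mutual consistency holds because they are global scalars obtained by closing loops into numbers, hence insensitive to the order in which relations are applied.

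I expect the $r_2$ computation to be the main obstacle. It is the step where the half-twist $\dagger$ and the ``drag-along'' operator $\ltimes$ interact most intricately, and it requires careful bookkeeping of the induced maps for $U_t^\inv$ (obtained by dragging $\nu_t^\inv$ through $U_t^\inv$) while keeping $\theta$ in its correct position. For the statement at hand, however, one needs only that the discrepancies on $i\one$ are nonzero scalars rather than their precise values; isolating those values and recognizing them as the RT central charge belongs to the subsequent discussion.
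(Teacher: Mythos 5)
First, a point of comparison you could not have known: the paper offers \emph{no} proof of this proposition. Immediately after stating it, it says it will not be proved there, both because the $\SLZ$-action of \thmref{thm:slz_action} has itself not been established in the paper and because, even granting that action, the computations are ``lengthy and belie their topological origins''; everything is deferred to future work. So your proposal can only be judged against what a complete argument would require, not against the paper's. Your reduction is indeed the intended one: by \defref{def:eqvrt_genrel} (together with the remark there that, unlike the cocycle condition of \defref{def:grp_action_genrel}, no ``second-order relations'' are needed for equivariant objects), a (projectively) $\SLZ$-equivariant structure is exactly a pair of isomorphisms making the two relation diagrams $R_1$, $R_2$ commute (up to scalar), so two diagram checks per object is the right shape of proof.

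The gaps are these. First, for $\cII\one$ --- the honest half of the statement --- your argument stops exactly where the content begins: ``I would expand the fourfold composite,'' ``the two sides should agree after the standard ribbon moves.'' Those two string-diagram verifications, carried out with the explicit $\nu_s,\nu_t$, the half-braidings $\Gamma^1,\Gamma^2$ of $\cII\one$, and the operations $\dagger$ and $\ltimes$, are the entire mathematical substance here, and they are precisely the computations the paper declines to do; asserting their expected outcome is not a proof. Second, for $i\one$ you work in the wrong direction: no Gauss sums, no nonvanishing of $p^{\pm}$, and no cocycle-consistency discussion are needed for the statement. Since $\cC$ is modular throughout this section, \lemref{lem:fully_faithful} gives $\End_\ZZC(i\one)\cong\End_\cC(\one)=\kk$, so any two isomorphisms between the same pair of objects differ automatically by a nonzero scalar; in particular the composite $\mu_s \circ U_s(\mu_s)\circ U_s^2(\mu_s)\circ U_s^3(\mu_s)$ and $(\gamma_1)_{i\one}$, both isomorphisms $U_s^4(i\one)\to i\one$, are proportional, and likewise for the $r_2$ diagram. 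Projective equivariance of $i\one$ is therefore essentially free once the action itself is granted; the scalar values and their identification with the central charge belong, as you partly note, to the discussion after the proposition, not to it. Third, both your argument and the proposition itself presuppose \thmref{thm:slz_action}, which is unproven in the paper; without its coherence condition the phrase ``equivariant with respect to the $\SLZ$-action'' is not yet well-posed --- this is the paper's own stated reason for omitting the proof, and a self-contained argument would have to either invoke that future work or verify the coherence directly.
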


We do not prove this here, since we have not shown the validity
of the $\SLZ$-action on $\ZZC$ in \secref{sec:sl2z}.
Even if we grant that the action is well-defined,
some of the computations are lengthy and belies their
topological origins.\\

So let us grant \prpref{prp:eqvrt_obj} for now.
$\SLZ$ acts on $\Hom_\ZZC(i\one, \cII\one)$ as follows:
for $g\in \SLZ$ and $\psi \in \Hom_\ZZC(i\one, \cII\one)$,
\[
  \rho_g(\psi) = \nu_g \circ \psi \circ \mu_g^\inv:
  i\one
    \xrightarrow[]{\mu_g^\inv} U_g(i\one)
    \xrightarrow[]{U_g(\psi) = \psi} U_g(\cII\one)
    \xrightarrow[]{\nu_g} \cII\one
\]

We use the explicit isomorphism
$\Phi: \cII\one \to i(\induct{})$,
\[
  \includegraphics[height=60pt]{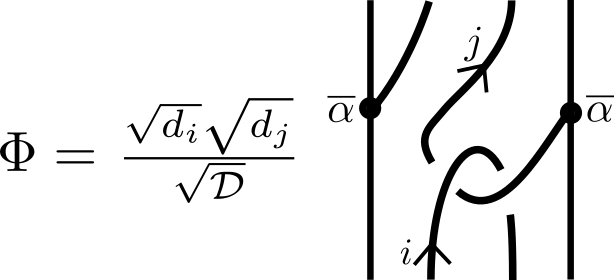}
\]
to identify $\Hom_\ZZC(i\one, \cII \one) \to \Hom_\ZZC(i\one, i(\induct{}))$.\\

The actions of $s,t\in \SLZ$ on $\Hom_\cC(\one, \induct{})$
are given by post-composing with
\begin{align*}
  \xi_s &= \mu_s^\inv \\
  \xi_t &= (\theta^\inv \tnsr \id) \circ \mu_s \circ (\theta^\inv \tnsr \id)
\end{align*}
respectively, where we just think of $\mu_s$ as a morphism in $\cC$,
and $\theta$ is the balancing structure of $\cC$.
(Note that in \ocite{BakK},
the actions of $s,t$ are given by $\xi_s, \theta \tnsr \id$,
respectively.
These actions are related
by twisting by the automorphism
of $\SLZ$ sending $s \mapsto s, t \mapsto t^\inv s^\inv t^\inv$.)\\

Finally, we may state the connection of our work
to the central charge anomaly in Reshetikhin-Turaev theory as follows:

\begin{proposition}
\begin{align*}
  \Hom_\cC(\one, \induct{}) &\to \Hom_\ZZC(i\one, \cII\one) \\
  \vphi &\mapsto \Phi^\inv \circ i(\vphi)
\end{align*}
intertwines the projective $\SLZ$-action
in Reshetikhin-Turaev theory
and the one described above.
\end{proposition}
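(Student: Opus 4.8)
The plan is to reduce the statement to a check on the two generators $s,t$ of $\SLZ$, and then to a single identity of morphisms in $\cC$. The map $F:\vphi\mapsto\Phi^\inv\circ i(\vphi)$ is a linear isomorphism, being the composite of the full-faithfulness isomorphism $\Hom_\cC(\one,\induct{})\cong\Hom_\ZZC(i\one,i(\induct{}))$ from \lemref{lem:fully_faithful} with postcomposition by $\Phi^\inv$. Since both the Reshetikhin-Turaev action and the action $\rho$ are projective $\SLZ$-actions assembled from data attached to $s,t$ (and the relations $r_1,r_2$ of \secref{sec:sl2z}), it suffices to verify that $F$ intertwines the operators attached to $s$ and to $t$; the extension to arbitrary words then follows, as both projective actions are built by composing these generator-operators along the same word representatives. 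I would record at the outset that, per the remark following the definition of $\xi_t$, the operators used here differ from the textbook pair $(\xi_s,\theta\tnsr\id)$ by the presentation automorphism $s\mapsto s$, $t\mapsto t^\inv s^\inv t^\inv$, so all comparisons must be made against the $\xi_s,\xi_t$ as defined above.

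Next I would transport the target action along $\Phi$. Writing $\rho_g(\psi)=\nu_g\circ U_g(\psi)\circ\mu_g^\inv$ and using that $U_g$ is trivial on morphisms, the desired intertwining $F(\xi_g\circ\vphi)=\rho_g(F(\vphi))$ becomes, after composing with $\Phi$ on the left and applying functoriality $i(\xi_g\circ\vphi)=i(\xi_g)\circ i(\vphi)$, the identity of $\cC$-morphisms
\[
  i(\xi_g)\circ i(\vphi) = (\Phi\circ\nu_g\circ\Phi^\inv)\circ i(\vphi)\circ\mu_g^\inv
\]
for every $\vphi\in\Hom_\cC(\one,\induct{})$, where I use that the underlying $\cC$-morphisms of $U_g(\Phi^\inv)$ and $U_g(i(\vphi))$ agree with those of $\Phi^\inv$ and $i(\vphi)$. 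Thus the whole proposition is reduced to checking this one diagrammatic identity for $g=s$ and $g=t$.

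For $g=s$, I would use that $\xi_s=\mu_s^\inv$ by definition, so the identity reads $i(\xi_s)\circ i(\vphi)=(\Phi\circ\nu_s\circ\Phi^\inv)\circ i(\vphi)\circ\xi_s$. The conceptual reason it holds is that both $i\one=\cI_1(\iota\one)$ and $\cII\one=\cI_1(\cI\one)$ are images of $\cI_1$ and that $\Phi$ is $\cI_1$ applied to the isomorphism of \lemref{lem:I1_kills_braiding}: conjugating the equivariant structure $\nu_s$ on the doubled induction $\cII\one$ by $\Phi$ reproduces $i$ applied to the single-induction half-twist, and the leftover source factor $\mu_s^\inv=\xi_s$ accounts for the discrepancy. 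Concretely this is a string-diagram check, drawing $\nu_s$, $\Phi$ and $\mu_s$ and sliding the half-twist on the $X$-strand past the induction struts using that $\Gamma$ is a half-braiding (\prpref{prp:intermediate_adjoint_1}). For $g=t$, I would substitute the definition $\xi_t=(\theta^\inv\tnsr\id)\circ\mu_s\circ(\theta^\inv\tnsr\id)$ and the definition of $U_t$ via $\lmb^2\ltimes\lmb^1$; the identity then amounts to showing that conjugating $\nu_t$ by $\Phi$ reproduces $i(\xi_t)$ up to the source factor $\mu_t^\inv$, with the two balancing factors $\theta^\inv\tnsr\id$ matching the half-twists that $\ltimes$ and the $\dagger$ appearing in the $\SLZ$-data introduce, invoked through naturality of $\theta$ and the balancing axiom.

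The main obstacle I anticipate is the $t$-generator computation: reconciling the two $\theta^\inv\tnsr\id$ factors and the presentation twist against the half-twists built into $U_t$, $\dagger$ and $\ltimes$ requires careful strand-by-strand tracking and repeated use of the balancing axiom and the half-braiding property, and it is precisely here that the topological meaning of the formulas is most obscured at the purely algebraic level. A secondary caveat is that the argument is conditional: it rests on \prpref{prp:eqvrt_obj} and hence on the well-definedness of the $\SLZ$-action of \secref{sec:sl2z} (\thmref{thm:slz_action}), both of which are only asserted in this excerpt, so the cleanest complete proof is best carried out in the extended Crane-Yetter framework where the two generator identities above acquire a transparent topological interpretation.
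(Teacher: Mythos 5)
Your proposal takes essentially the same route as the paper's own treatment: the paper likewise flags the argument as conditional on the unproven $\SLZ$-action and \prpref{prp:eqvrt_obj}, reduces the intertwining statement to the generator-wise identity $\Phi \circ \rho_g(\Phi^\inv \circ i(\vphi)) = i(\xi_g \circ \vphi)$, verifies it only for $g=s$, and declares the $t$-computation ``similar.'' The only device you omit is the paper's further reduction to the basis $\{\coev_{X_j}\}_{j\in J}$ of $\Hom_\cC(\one,\induct{})$, for which it then executes the string-diagram check explicitly, whereas your plan describes that computation without carrying it out.
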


Once again we do not prove this here
for the same reasons as before,
but we verify it for the action of $s\in \SLZ$.
Since $\{\coev_{X_j}\}_{j\in J}$ forms a basis of
$\Hom_\cC(\one, \induct{})$,
it suffices to consider $\vphi_j := \coev_{X_j}$,
for which we check that
$\Phi \circ \rho_s(\Phi^\inv \circ i(\vphi_j)) = i(\xi_s \circ \vphi_j)$
below
(we leave out coefficients for readability,
and diagrams read left to right):

\begin{align*}
  \Phi \circ \rho_s (\Phi^\inv \circ i(\vphi_j))
  &= \picmid{\includegraphics[height=90pt]{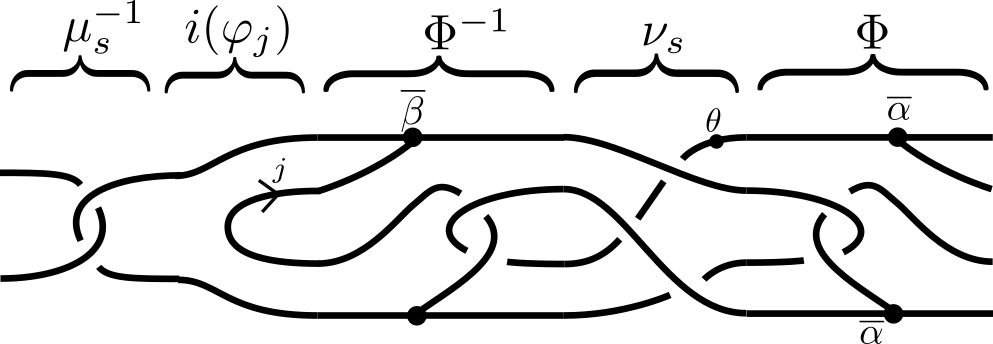}}\\
  &= \picmid{\includegraphics[height=60pt]{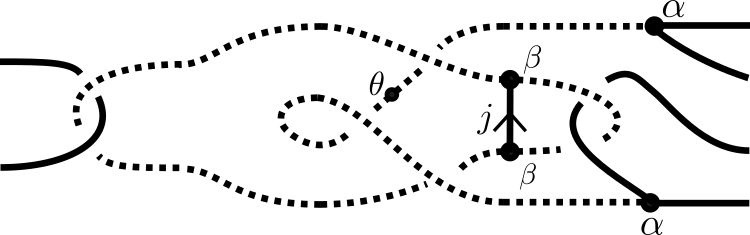}}\\
  &= \picmid{\includegraphics[height=60pt]{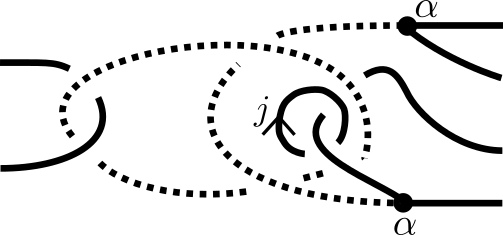}}\\
  &= \picmid{\includegraphics[height=50pt]{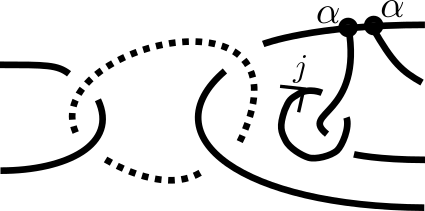}}
   = \picmid{\includegraphics[height=50pt]{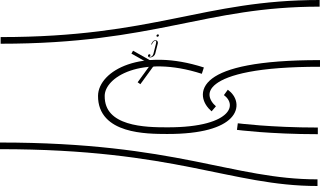}}
   = i(\xi_s \circ \vphi_j)
\end{align*}
The computation for $t$ is similar.\\

To conclude this section,
let us say a few words in relation to the extended Crane-Yetter TQFT.
$\cII\one$ is the ``empty configuration"
in $\Zcy(\punctorus)$, with no marked points or projections,
so it is natural to expect that it has an $\SLZ$-equivariant structure.
$i\one$ is the object $\Zcy(M)$,
where $M$ is the solid torus with a point removed from its interior.
$M$ depends on the choice of an isotopy class of
a simple closed curve on $\torus$,
the one that is made contractible in $M$.
Clearly the action of $\SLZ$ on $\del M \cong \torus$
doesn't extend to $M$,
since it doesn't fix isotopy classes of curves on $\torus$,
so while $U_s i\one \cong Ti\one \cong i\one$,
these isomorphisms are not expected to be
coherent with the action of $\SLZ$ on
$\Zcy(\punctorus) \cong \ZZC$.

\section{$\cC = H-\modl$ and Elliptic Drinfeld Double}
\label{sec:C_Hmod}

In this section, we treat the case when $\cC = H-\modl$
for a finite-dimensional \emph{quasi-triangular}
Hopf algebra $H$
(as in \secref{sec:drfld_symmetric}, once again
we implicitly assume semisimplicty of $H$ to keep
$\cC$ semisimple, but we never really use it).\\

In \secref{sec:drfld_symmetric},
we saw that for a Hopf algebra $H$,
$\cZ(H-\modl) \cong \cD(H)-\modl$,
where $\cD(H)$ is Drinfeld's quantum double
(see \prpref{prp:ZRep}).
Here we will construct a similar algebra $\cDD(H)$
associated to a \emph{quasi-triangular} Hopf algebra $H$,
which we call the
\emph{Elliptic Drinfeld double}.
We then show that
$\cZ^2(H-\modl) \cong \cDD(H)-\modl$
(see \thmref{thm:ZZRep_alg} below).
While $\cD(H)$ is a ribbon algebra, in general, $\cDD(H)$
has no obvious coalgebra structure, but it does
when $H$ is cocommutative.

\begin{remark}
As mentioned in the introduction,
\ocite{BJ} defines a similar algebra which
the authors also call the elliptic double,
and we expect these elliptic doubles to
be Morita equivalent.
\end{remark}

Let us first define $\cDD(H)$ as an algebra:

\begin{defn-prop}
\label{def:elliptic_double}
Let $(H, m, 1, \Delta, \veps, S, \cR, v, u)$ be a finite-dimensional
ribbon Hopf algebra,
where $v,u$ are the pivotal and ribbon elements respectively.
The \emph{Elliptic Drinfeld double} of $H$, denoted $\cDD(H)$,
is defined as the vector space $H \tnsr H_1^\dcop \tnsr H_2^\dcop$
(where $H_1 = H_2 = H$, indexing for clarity)
with the following algebra structure:
\begin{itemize}
\item the three obvious inclusions of
  $H$, $H_1^\dcop$, and $H_2^\dcop$
  into $H \tnsr H_1^\dcop \tnsr H_2^\dcop$ are algebra maps.

\item Each copy of $H^\dcop$ commutes past $H$
  in the same manner as in $\cD(H)$:
  e.g. for $h \in H$ and $f \in H_1^\dcop$,
  \[
    fh =
    \eval{f_3,S^\inv(h_1)}\eval{f_1,h_3}
      h_2 f_2
  \]
  where we use Sweedler's notation 
  $\Delta^2(h) = h_1 \tnsr h_2 \tnsr h_3$
  and $\Delta^2(f) = ((m^*)^\cop)^2(f) = f_3 \tnsr f_2 \tnsr f_1$.
  In other words, the inclusions
  \begin{align*}
    \cD(H) \cong H \tnsr H_1^\dcop
      &\overset{\iota_1}{\subseteq} \cDD(H)\\
    \cD(H) \cong H \tnsr H_2^\dcop 
      &\overset{\iota_2}{\subseteq} \cDD(H)
  \end{align*}
  are algebra maps.

\item The two copies of $H^\dcop$ commute by the following relation:
  writing $\cR = s_a \tnsr t_a$ (suppressing the sum),
    for $f^1 \in H_1^\dcop, f^2 \in H_2^\dcop$,
  \[
    f^2 f^1 =
      \eval{f_1^1, t_a}\eval{f_3^1, s_{a'}}
      \eval{f_1^2,s_a}\eval{f_3^2,t_{a'}}
      f_2^1 f_2^2
  \]

\end{itemize}
\end{defn-prop}

Note that the coproduct on $H^\dcop$ is not used here,
but will be important later when considering
the (symmetric) tensor product structures on $\ZZC$ and $\cDD(H)-\modl$.

\begin{theorem}
\label{thm:ZZRep_alg}
For $\cC = H-\modl$, $\ZZC \cong \cDD(H)-\modl$
as abelian categories.
\end{theorem}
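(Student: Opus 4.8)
The plan is to bootstrap from the identification $\ZC \simeq \cD(H)-\modl$ of \prpref{prp:ZRep}, reading $\cDD(H)$ as the algebra that records \emph{two} compatible half-braidings. First I would set up the object dictionary. By \prpref{prp:ZRep}, a single half-braiding $\lmb$ on an $H$-module $X$ is the same datum as an $H^\dcop$-action on $X$ (compatible with the $H$-action in the $\cD(H)$-sense), and under this dictionary the half-braiding is given on any $A \in \cC$ by the explicit formula $\lmb_A(v \tnsr x) = \sum_i (h_i^* \cdot x) \tnsr (h_i \cdot v)$, where $\{h_i\},\{h_i^*\}$ are dual bases of $H, H^\dcop$ (this is just $\lmb = \tau \circ (R \cdot -)$ with $R = \sum h_i \tnsr h_i^*$ the $R$-matrix of $\cD(H)$, $\tau$ the flip). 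Consequently the data of an object $(X,\lmb^1,\lmb^2)$ \emph{before} imposing COMM is exactly an $H$-module carrying two $H^\dcop$-actions, each forming a $\cD(H)$-module with the common $H$-action; this is the module data for the algebra generated by $H$ and two copies $H_1^\dcop, H_2^\dcop$ satisfying the first two bullets of \defref{def:elliptic_double} but with no relation yet between the two copies.

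The heart of the proof, and the step I expect to be the main obstacle, is to show that COMM is equivalent to the remaining cross-relation $f^2 f^1 = \langle f_1^1, t_a\rangle\langle f_3^1, s_{a'}\rangle\langle f_1^2, s_a\rangle\langle f_3^2, t_{a'}\rangle\, f_2^1 f_2^2$ of \defref{def:elliptic_double}. Since a half-braiding is determined by its value on the regular representation, it suffices to test COMM with $A = B = H$ and $v = w = 1$, and then pair the two free $H$-legs of the resulting morphism against arbitrary $f^1, f^2 \in H^*$. Substituting the formula above for $\lmb^1,\lmb^2$ together with the $\cC$-braiding $c_{A,B} = \tau\circ(\cR\cdot-)$, $\cR = s_a \tnsr t_a$, the left-hand composite of COMM produces the operator $\langle f^1_{(1)}, s_a\rangle\langle f^2_{(1)}, t_a\rangle\,(f^1_{(2)} f^2_{(2)})\cdot x$ in the normal-form order $H_1^\dcop H_2^\dcop$ (the single $\cR$ coming from $c$), while the right-hand composite, which uses the reverse braiding $c^{-1}$, produces the analogous expression with $\cR^{-1}$ and in the opposite order $f^2_{(1)} f^1_{(1)}$. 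Equating the two and then clearing $\cR^{-1}$ into normal form --- which is exactly where the \emph{second} copy $\cR = s_{a'}\tnsr t_{a'}$ and the full triple coproduct of $H^\dcop$ enter, giving three Sweedler components $f_1,f_2,f_3$ --- yields precisely the four-fold pairing displayed above. The delicate part is the bookkeeping of the two $R$-matrices, of the antipode factors inherited from the $\cD(H)$-relations, and of the opposite comultiplication on $H^\dcop$; I would carry out each implication (COMM $\Rightarrow$ relation and relation $\Rightarrow$ COMM) separately so that the equivalence is genuinely biconditional.

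Granting this second step, the categorical equivalence is formal. I would define $\Psi: \cDD(H)-\modl \to \ZZC$ by sending a module $X$ to $(X,\lmb^1,\lmb^2)$, where $\lmb^a$ is the half-braiding attached to the $H_a^\dcop$-action; COMM holds by the relation-$\Rightarrow$-COMM direction. Conversely $\Phi: \ZZC \to \cDD(H)-\modl$ sends $(X,\lmb^1,\lmb^2)$ to $X$ equipped with its two $H^\dcop$-actions, which satisfy the cross-relation by the COMM-$\Rightarrow$-relation direction and hence really do assemble into a $\cDD(H)$-module. On morphisms, a map in $\ZZC$ is by \defref{def:ZZC} an $H$-linear map intertwining both half-braidings, which under the dictionary is exactly an $H$-linear map intertwining both $H^\dcop$-actions, i.e.\ a $\cDD(H)$-module map; thus $\Phi,\Psi$ are fully faithful. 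That they are mutually inverse on objects follows by applying \prpref{prp:ZRep} to each half-braiding separately, yielding the claimed equivalence of abelian categories. As a consistency check on $\cDD(H)$ itself, the normal-form shape of the cross-relation is precisely what makes $H \tnsr H_1^\dcop \tnsr H_2^\dcop$ a PBW-type spanning set, matching the vector-space description in \defref{def:elliptic_double}.
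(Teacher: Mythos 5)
Your proposal is correct and takes essentially the same route as the paper's proof: both use \prpref{prp:ZRep} to translate each half-braiding into an $H^\dcop$-action, reduce the verification of COMM to the case $A=B=H$ by naturality of half-braidings, and identify the resulting identity --- after pairing against $f^1,f^2\in H^*$, with the two copies of $\cR$ and the triple coproducts appearing exactly as you describe --- with the cross-relation between $H_1^\dcop$ and $H_2^\dcop$ in \defref{def:elliptic_double}. The paper also handles the converse (a pair of compatible half-braidings yields a $\cDD(H)$-module) by running the same computation in reverse, just as you propose.
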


\begin{proof}
The proof will be very similar to $\cZ(H-\modl) \cong \cD(H)-\modl$
(see \prpref{prp:ZRep}).\\

The functor $\cDD(H)-\modl \to \ZZC$ is constructed as follows.
Let $X$ be a left $\cDD(H)$-module. It is in particular an $H$-module,
i.e. an object in $\cC$. The action of $H_1^\dcop$ on $X$
gives us one half-braiding: for $A$ another $H$-module,
\[
  \lambda^1_A = P \circ R_1 : A \tnsr X \to X \tnsr A
\]
where $P$ is the swapping of factors,
and $R_1$ stands for acting by
$R_1 = (\iota_1 \tnsr \iota_1)(R) = \sum h_i \tnsr \iota_1(h_i^*)$,
where recall $\iota_1$ is the first inclusion of algebras
$\cD(H) \cong H \tnsr H_1^\dcop \subseteq \cDD(H)$
(we suppress $\iota_1$ on $H$ because $H \hookrightarrow \cDD(H)$
is unambiguous).
Likewise, we can define a second half-braiding by
\[
  \lambda^2_A = P \circ R_2 : A \tnsr X \to X \tnsr A
\]
where $R_2 = (\iota_2 \tnsr \iota_2)(R)$.\\

We need to show that $\lmb^1,\lmb^2$ satisfy COMM,
and it suffices to check it for $A = B = H$
by the naturality of half-braidings.
This boils down to checking that
\[
  \cR^{12} R_1^{13} R_2^{23} = R_2^{23} R_1^{13} (\cR^\inv)^{21}
  \in H \tnsr H \tnsr \End_\kk(X)
\]
or equivalently,
\[
  \cR^{12} R_1^{13} R_2^{23} \cR^{21} = R_2^{23} R_1^{13}
\]
Here $\cR^{21} = (\cR^\op)^{12}$. Once again we point out that
in COMM, one side has $c_{-,-}$ while the other has $c_{-,-}^\inv$,
hence the appearance of $(\cR^\inv)^\op$. So
\[
  s_a h_i t_{a'} \tnsr t_a h_j s_{a'} \tnsr \iota_1(h_i^*) \iota_2(h_j^*)
  = h_l \tnsr h_k \tnsr \iota_2(h_l^*) \iota_1(h_k^*)
\]
where $\cR = s_a \tnsr t_a$.
For $f^1,f^2 \in H^*$,
applying $f^2 \tnsr f^1 \tnsr \id$, we get
\begin{align*}
  \iota_2(f^2) \iota_1(f^1)
  &= \eval{f^2,s_a h_i t_{a'}} \eval{f^1,t_a h_j s_{a'}}
      \iota_1(h_i^*) \iota_2(h_j^*)\\
  &= \eval{f_1^2,s_a} \eval{f_3^2,t_{a'}}
      \eval{f_1^1,t_a} \eval{f_3^1,s_{a'}}
      \iota_1(f_2^2) \iota_2(f_2^1)
\end{align*}
which is implied by the commutation relation between
$H_1^\dcop$ and $H_2^\dcop$ in $\cDD(H)$.\\

For the other way, let $(X, \lambda^1, \lambda^2)$.
Using the same methods in the proof of \prpref{prp:ZRep},
for each half-braiding $\lambda^1, \lambda^2$,
we cook up two $H^*$-actions on $X$,
so that we have an action of $H * H_1^* * H_2^*$
on $X$, where the $*$ denotes free product of algebras.
To see that this action factors through $\cDD(H)$,
we check that the commutation relations between factors
$H, H_1^\dcop, H_2^\dcop$ of $\cDD(H)$
are satisfied in their actions on $X$.
For commutation relations between $H_1^\dcop$ and $H_2^\dcop$,
it basically follows from the same computations above but in reverse,
while for the other two pairs,
it follows from the proof of \prpref{prp:ZRep}.
\end{proof}

\begin{corollary}
  If $H$ is semisimple, then so is $\cDD(H)$.
\end{corollary}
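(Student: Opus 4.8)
The plan is to deduce this formally from \thmref{thm:ZZRep_alg} together with the semisimplicity of $\ZZC$, exactly mirroring the corollary to \prpref{prp:ZRep} that deduces semisimplicity of $\cD(H)$. First I would observe that the standing hypotheses of this section guarantee that $\cC = H-\modl$ is a premodular category: since $H$ is a finite-dimensional ribbon Hopf algebra (as in \defref{def:elliptic_double}) and we are assuming $H$ semisimple, its category of finite-dimensional modules is a ribbon fusion category, i.e. premodular. Hence the earlier proposition asserting that $\ZZC$ is abelian and semisimple with finitely many simple objects applies, so $\ZZC$ is a semisimple abelian category.

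Next I would transport this conclusion along the equivalence. By \thmref{thm:ZZRep_alg} there is an equivalence of abelian categories $\ZZC \cong \cDD(H)-\modl$. Semisimplicity of an abelian category is invariant under equivalence, so $\cDD(H)-\modl$ is semisimple as well; in particular every finite-dimensional $\cDD(H)$-module decomposes as a direct sum of simple modules.

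Finally I would invoke the standard dictionary between finite-dimensional algebras and their module categories. The algebra $\cDD(H)$ is finite-dimensional, being the vector space $H \tnsr H_1^\dcop \tnsr H_2^\dcop$ with $H$ finite-dimensional. A finite-dimensional algebra $A$ is semisimple precisely when its regular representation is a semisimple $A$-module, equivalently when $A-\modl$ is a semisimple abelian category. Applying this with $A = \cDD(H)$ and using the previous step, the regular module is a direct sum of simples, so $\cDD(H)$ is a semisimple algebra.

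There is no serious obstacle here; the two points requiring (routine) care are both classical facts independent of the specific structure of $\cDD(H)$: that an equivalence of abelian categories preserves semisimplicity, and that semisimplicity of a finite-dimensional algebra is detected by the semisimplicity of its module category (essentially Artin--Wedderburn, the analogue of Maschke's theorem). Neither uses anything beyond finite-dimensionality of $\cDD(H)$, so the corollary follows immediately.
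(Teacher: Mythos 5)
Your proposal is correct and matches the paper's intended argument: the corollary is stated without proof precisely because it follows immediately from \thmref{thm:ZZRep_alg}, the semisimplicity of $\ZZC$ for premodular $\cC$, and the standard fact that a finite-dimensional algebra is semisimple iff its module category is. Your write-up simply makes explicit the same chain of reasoning, including the correct observation that semisimplicity of $H$ is what makes $\cC = H-\modl$ premodular in the first place.
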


\begin{remark}
\label{rmk:elliptic_slz}
The elliptic double in \ocite{BJ}
carries an action of $\wdtld{\SLZ}$,
but ours do not.
Via the equivalence in \thmref{thm:ZZRep_alg},
the $\SLZ$-action on $\ZZC$ laid out in \secref{sec:sl2z}
defines an $\SLZ$-action on $\cDD(H) - \modl$,
hence some sort of action on $\cDD(H)$ as well.
However, due to the flexibility of group actions
on categories
(manifested in the extra data of natural isomorphisms
  $\gamma_1, \gamma_2$),
we don't get an honest action on $\cDD(H)$.\\
\indent More precisely, from reconstruction theory,
we have equivalences
\[
\begin{tikzcd}
  \End(F)-\modl \ar[r, equal]
    & \End(F \circ U_s)-\modl \ar[r]
    & \End(F)-\modl \\
    & \ZZC \ar[u] \ar[r, "U_s"]
    & \ZZC \ar[u]
\end{tikzcd}
\]
where $F$ is the forgetful functor to $\Vect$
(forgetting the half-braidings and $H-\modl$ structure.
Since $U_s$ only changes the half-braidings,
$F = F \circ U_s$).
Thus $s\in \SLZ$ acts on the right by
\begin{align*}
  \psi_s: \cDD(H) \cong \End(F) \xrightarrow[]{-\circ U_s} \End(F\circ U_s)
    = \End(F) \cong \cDD(H)
\end{align*}
and similarly for $t$.
Concretely, since $U_s$ changes the half-braidings
$(X, \lmb^1,\lmb^2) \mapsto (X, (\lmb^2)^\dagger, \lmb^1)$,
it changes the resulting action of $H_1^\dcop,H_2^\dcop$ on
$X$, and $\psi_s$ encodes this
(and likewise for $U_t$).\\
\indent However, for the relation $r_1: s^4 = 1$,
we don't get $\psi_s^4 = \id_{\cDD(H)}$,
but we have
\[
\begin{tikzcd}
  \End(F \circ U_s)-\modl
    \ar[rr, bend left=30, ""{name=U, below}, "U_s^4"]
    \ar[rr, bend right=30, ""{name=D}, "\id"']
    \ar[Rightarrow, from=U, to=D, "\gamma_1 = \theta"]
    & & \End(F)-\modl
\end{tikzcd}
\]
So instead we have
\[
  \theta: U_s^4(X,\lmb^1,\lmb^2) \to (X,\lmb^1,\lmb^2)
\]
hence $u$, the ribbon element of $H$, intertwines:
\[
  u \cdot \psi_s^4(x) = x \cdot u
\]
for every $x \in \cDD(H)$.
This suggests an $\wdtld{\SLZ}$-action,
but the other relation $r_2 : sts = t^\inv s t^\inv$
also requires a functorial isomorphism $\gamma_2$.
So instead we have the action of the mapping class
group of a genus 1 surface with one puncture
and one boundary componenet,
which is some extension of $\wdtld{\SLZ}$
by $\pi_1(\punctorus) \cong \ZZ * \ZZ$;
this is not too surprising
since our definition of $\ZZC$
(and hence $\cDD(H)$) comes from fixing a point
and a pair of meridian and longitude (see Introduction),
giving rise to a unnaturalness of the action.
We will investigate this and connections
to the elliptic double in \ocite{BJ}
further in upcoming work.
\hfill $\triangle$\\
\end{remark}

\subsection{$H$ cocommutative, $\cC$ symmetric}
\label{sec:symmetric_C}

In the usual Drinfeld center case, we have a braided tensor
equivalence $Z(H-\modl) \simeq \cD(H)-\modl$
(see \prpref{prp:ZRep}).
The proof presented there implicity uses the 
perspective of reconstruction theory of finite dimensional
Hopf algebras, in that we start with the forgetful functor
$F : Z(H-\modl) \to \Vect$,
and $\cD(H)$ appears as the Hopf algebra
of endomorphisms of $F$
(see also \rmkref{rmk:elliptic_slz}).
We can try to do the same thing with
the tensor product structure on $\ZZC$ sketched in
\secref{sec:tensor_product}.
However, since the tensor product on $\ZZC$ defined
in \secref{sec:tensor_product}
is generally multitensor but not tensor,
the obvious forgetful functor to $\Vect$ cannot be tensor,
so there's no clear way to use reconstruction theory to recover a
coalgebra structure on $\cDD(H)$.\\

If we restrict ourselves to $H$ cocommutative
(so that it is quasi-triangular with $\cR = 1 \tnsr 1$
and $\cC$ is symmetric),
we can consider the naive tensor product
\[
  (X,\lmb^1,\lmb^2) \odot (Y,\mu^1,\mu^2)
  = (X\tnsr Y, \lmb^1\tnsr\mu^1, \lmb^2\tnsr\mu^2)
\]
on $\ZZC$, and here $\lmb^1\tnsr\mu^1, \lmb^2\tnsr\mu^2$
indeed satisfy COMM:
\[
  \includegraphics[height=60pt]{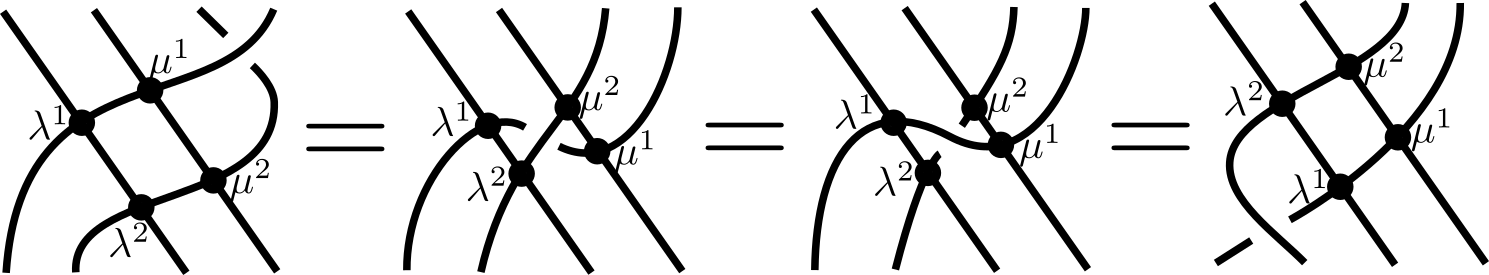}
\]
The associativity constraint will just be the one from $\cC$.
The obvious forgetful functor to $\Vect$ has an obvious tensor structre,
so we can apply reconstruction theory again.
In particular, we may upgrade $\cDD(H)$ to a ribbon Hopf algebra:

\begin{defn-prop}
\label{def:elliptic_double_cocomm}
Let $(H, m, 1, \Delta, \veps, S, v)$ be a finite-dimensional
cocommutative ribbon Hopf algebra
(with $\cR = 1 \tnsr 1$ and $u=1$).
The elliptic Drinfeld double $\cDD(H)$ as defined
in \defref{def:elliptic_double}
admits the following additional structure,
making it a ribbon Hopf algebra:
\begin{itemize}

\item As a coalgebra, it is simply $H \tnsr H_1^\dcop \tnsr H_2^\dcop$,
  i.e. $\Delta(h \tnsr f^1 \tnsr f^2) =
    (h_1 \tnsr f_2^1 \tnsr f_2^2) \tnsr (h_2 \tnsr f_1^1 \tnsr f_2^2)$,

\item The antipode is also given componentwise,
  i.e. 
  \[
    S(h f^1 f^2) = S^\inv(f_2) S^\inv(f_1) S(h)
  \]
  where $f^1\in H_1^\dcop, f^2 \in H_2^\dcop$.

\item $v \in H \hookrightarrow \cDD(H)$ is the pivotal element.

\end{itemize}
\end{defn-prop}

\begin{proof}
Checking compatibility between the various structures
boils down to familiar computations.
\end{proof}

Observe that in this case of $H$ cocommutative,
the actions of $H_1^\dcop$ and $H_2^\dcop$
commute, since $\cR = 1\tnsr 1$, so
\[
  f^2 f^1
  = \eval{f_1^1,1}\eval{f_3^1,1}
    \eval{f_1^2,1}\eval{f_3^2,1}
    f_2^1 f_2^2
  = \veps(f_1^1)\veps(f_3^1)
    \veps(f_1^2)\veps(f_3^2)
    f_2^1 f_2^2
  = f^1 f^2
\]

\begin{remark}
By Deligne's theorem on tensor categories \ocite{De1},\ocite{De2},
any symmetric fusion category is
tensor equivalent to $\Rep(G)$ for some finite group $G$,
and is braided tensor equivalent to it up to a twist
by some central element $z$ of order 2.
Thus, $H-\modl$ covers basically all symmetric fusion categories.
\end{remark}

\begin{theorem}
\label{thm:ZZRep_alg_symm}
When $H$ is cocommutative, the equivalence in \thmref{thm:ZZRep_alg}
is a tensor equivalence.
\end{theorem}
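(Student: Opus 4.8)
The plan is to upgrade the equivalence of \thmref{thm:ZZRep_alg} to a monoidal one by showing directly that the functor $G : \cDD(H)-\modl \to \ZZC$ constructed in its proof is monoidal, where $\cDD(H)-\modl$ is given the tensor product coming from the coalgebra structure of \defref{def:elliptic_double_cocomm} and $\ZZC$ carries the symmetric product $\odot$ of \secref{sec:symmetric_C}. Since $G$ is the identity on underlying $H$-modules, for $\cDD(H)$-modules $X,Y$ we have $G(X)\odot G(Y) = (X\tnsr Y, \lmb^1_X\tnsr\lmb^1_Y, \lmb^2_X\tnsr\lmb^2_Y)$, whereas $G(X\tnsr Y)$ equips the same space $X\tnsr Y$ with the two half-braidings read off from the $H_1^\dcop$- and $H_2^\dcop$-actions that the coproduct $\Delta$ induces on $X\tnsr Y$. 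Thus I would take the identity map on $X\tnsr Y$ as the candidate tensor-structure isomorphism $J_{X,Y} : G(X)\odot G(Y) \xrightarrow{\sim} G(X\tnsr Y)$, and the entire content reduces to checking that these two pairs of half-braidings coincide. The unit is sent to the unit (the trivial $\cDD(H)$-module is $(\one,\id,\id)$), and the associativity and pivotal data are inherited from $\cC$, so once the half-braidings match the pentagon, triangle and pivotal coherences are automatic.

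The key computation is parallel to the one in \prpref{prp:ZRep} and is carried out one copy at a time. For fixed $i$, the half-braiding $\lmb^i_X\tnsr\lmb^i_Y$ in the $\odot$-product is ``braid by $\lmb^i_X$, then by $\lmb^i_Y$''; translating through the dictionary $\lmb^i = P\circ R_i$ exactly as in \prpref{prp:ZRep}, the induced action of $f\in H_i^*$ on $X\tnsr Y$ is the one given by $(m^*)^\cop(f)$, the coproduct of $H_i^\dcop$, acting diagonally. This is precisely the action of $f$ on $X\tnsr Y$ prescribed by the componentwise coproduct of \defref{def:elliptic_double_cocomm}. Hence $\lmb^i_X\tnsr\lmb^i_Y$ agrees with the half-braiding assigned by $G(X\tnsr Y)$, for both $i=1,2$, which is what gives $J_{X,Y}=\id$ as a morphism in $\ZZC$.

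The hard part, and the place where cocommutativity is essential, is the consistency between the two copies. In the general case the obstruction to $\lmb^1\tnsr\mu^1$ and $\lmb^2\tnsr\mu^2$ satisfying COMM is the same obstruction to the componentwise $\Delta$ being an algebra map for $\cDD(H)$: both are governed by the commutation relation between $H_1^\dcop$ and $H_2^\dcop$, which trivializes exactly when $\cR = 1\tnsr 1$. So the core of the argument is to observe that cocommutativity makes these two phenomena vanish simultaneously: the display following \defref{def:elliptic_double_cocomm} shows $f^2f^1 = f^1f^2$, whence $\Delta$ is an algebra homomorphism and $G(X\tnsr Y)$ is genuinely an object of $\ZZC$, while the display in \secref{sec:symmetric_C} shows $\lmb^1\tnsr\mu^1$ and $\lmb^2\tnsr\mu^2$ satisfy COMM, so $\odot$ is well-defined. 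Granting these, the identity-map tensor structure is coherent with the constraints of $\cC$, and the quasi-inverse functor $\ZZC \to \cDD(H)-\modl$ inherits a monoidal structure formally. Finally, the pivotal element $v\in H\hookrightarrow\cDD(H)$ acts as the pivotal structure inherited from $\cC$ on both sides, so the equivalence respects the ribbon structure, completing the proof that it is a tensor equivalence.
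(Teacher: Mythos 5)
Your proposal is correct and follows essentially the same route as the paper, whose proof is simply the remark that the argument is ``essentially the same as in the proof of \prpref{prp:ZRep}'': you spell out exactly what that means, namely running the $(m^*)^\cop$ computation from \prpref{prp:ZRep} once for each copy $H_i^\dcop$, so that the identity map on $X\tnsr Y$ is the tensor-structure isomorphism between the $\odot$-product of half-braidings and the half-braidings induced by the componentwise coproduct of \defref{def:elliptic_double_cocomm}. Your added observation that cocommutativity simultaneously makes $\odot$ well-defined (COMM holds) and makes $\Delta$ an algebra map is exactly the content the paper establishes in \secref{sec:symmetric_C} before stating the theorem.
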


\begin{proof}
Essentially the same as in the proof of \prpref{prp:ZRep}.
\end{proof}

\begin{example}[Group Algebra]
\label{xmp:group_algebra_elliptic}

Recall the setup of \xmpref{xmp:group_algebra}:
$H = \kk[G]$, $H^\dcop = F(G)^\cop$,
so we have
\[
  \cDD(H) = \kk[G] \tnsr F(G_1^\op) \tnsr F(G_2^\op)
    = \kk[G] \tnsr F(G_1^\op \times G_2^\op)
\]
as coalgebras, where of course $G_1 = G_2 = G$
(the second equality is justified because the actions of $F(G_1^\op)$
and $F(G_2^\op)$ commute).
Then the commutation relations read
\[
  \delta_{(g_1,g_2)} h = h \delta_{(h^\inv g_1 h, h^\inv g_2 h)}
\]

Denote $\cDD(G) := \cDD(\kk[G])$.
Similar to \xmpref{xmp:group_algebra},
representations of $\cDD(G)$ can be interpreted as
$G$-equivariant vector bundles over $G\times G$,
where $G$ acts on $G\times G$ by conjugation on each factor.

The diagonal map $G \hookrightarrow G \times G$
is $G$-equivariant, and pulls back $G$-equivariant bundles,
giving us a restriction functor
$\cDD(G)-\modl \to \cD(G)-\modl$.
On the level of algebras, this corresponds to the inclusion
\begin{align*}
  \cD(G) &\hookrightarrow \cDD(G) \\
  g &\mapsto g \\
  \delta_g &\mapsto \delta_{(g,g)}
\end{align*}

This is not a coalgebra map, or equivalently,
the restriction functor is not tensor.
For example, bundles supported on the orbits of
$(g,1)$ and $(1,g)$, respectively,
would each restrict to 0 on the diagonal,
but their tensor product would have a non-trivial
vector space over $(g,g)$.\\

However, the diagonal inclusion $G \hookrightarrow G\times G$
induces a push-forward functor
\[
  \cD(G)-\modl \to \cDD(G)-\modl
\]
and this is tensor;
equivalently, it is easy to verify that the projection
$\cDD(G) \to \cD(G)$ induced from the central idempotent
$\sum_g \delta_{(g,g)} \in \cDD(G)$
is a coalgebra map.
In terms of half-braidings, this functor
\[
  \cZ(G-\modl) \to \cZ^2(G-\modl)
\]
is given by
\[
  (X,\lmb) \to (X,\lmb,\lmb)
\]

\hfill $\triangle$
\end{example}

\begin{example}
\label{xmp:group_algebra_elliptic_tnsr}
We may consider the group algebra example above,
but instead consider the tensor product discussed in
\secref{sec:tensor_product}.
So our objects are still $G$-equivariant bundles
over $G\times G$,
but the tensor product of two bundles
$V = \dirsum_{g_1,g_2} V_{(g_1,g_2)}$ and
$W = \dirsum_{h_1,h_2} W_{(h_1,h_2)}$
is the image of the usual $V \tnsr W$
under the projection $Q_{\lmb^1,\mu^1}$
(see \defref{def:ZZC_tnsr}).\\

Recall from \thmref{thm:ZZRep_alg}
that to interpret a $\cDD(H)$-module $V$ as
an object in $\cZ^\textrm{el}(H-\modl)$,
the first half-braiding is given by
$\lmb^1 = P \circ R_1$,
where $P$ is the usual swapping of factors,
and $R_1 = \sum_j h_j \tnsr \iota_1(h_j^*)$,
and similarly for the second half-braiding.
Here $H=\kk[G]$,
so $R_1 = \sum_g g \tnsr \delta_{(g,*)}$,
where $\delta_{(g,*)} := \iota_1(\delta_g)
  = \sum_h \delta_{g,h}$.\\

Concretely, $Q_{\lmb^1,\mu^1}$ works out to the following.
We write it as a sum 
$Q_{\lmb^1,\mu^1} = \frac{1}{|G|}\sum_j \dim X_j \cdot Q^j$
(recall the dashed line represents a sum over simples,
weighted by $d_j = \dim X_j$, and $\cD = \sum_j d_j^2 = |G|$).\\

For each $j$, $Q^j$ works out to be
(implicitly summing over a basis $\{e_l\}$ of $X_j$)
\begin{align*}
v \tnsr w
  &\mapsto e_l \tnsr e_l^* \tnsr v \tnsr w \\
  &\mapsto \sum_g e_l \tnsr \delta_{(g,*)} \cdot v
                \tnsr g \cdot e_l^* \tnsr w \\
  &\mapsto \sum_g \delta_{(g,*)} \cdot v \tnsr e_l
                \tnsr w \tnsr g \cdot e_l^* \\
  &\mapsto \sum_{g,h} \delta_{(g,*)} \cdot v
                \tnsr \delta_{(h,*)} \cdot w
                \tnsr h \cdot e_l
                \tnsr g \cdot e_l^* \\
  &\mapsto \sum_{g,h} \eval{g \cdot e_l^*, h \cdot e_l}
                \delta_{(g,*)} \cdot v
                \tnsr \delta_{(h,*)} \cdot w \\
  &= \sum_{g,h} \tr_{X_j} (g^\inv h)
                \delta_{(g,*)} \cdot v
                \tnsr \delta_{(h,*)} \cdot w \\
\end{align*}

Then $Q_{\lmb^1,\mu^1}$ is
\begin{align*}
v \tnsr w
  &\mapsto \frac{1}{|G|} \sum_{j,g,h} \dim X_j \tr_{X_j}(h^\inv g)
                      \delta_{(g,*)}\cdot v \tnsr \delta_{(h,*)}\cdot w\\
  &= \frac{1}{|G|} \sum_{g,h} \tr_{\kk[G]}(h^\inv g)
                      \delta_{(g,*)}\cdot v \tnsr \delta_{(h,*)}\cdot w\\
  &= \sum_{g,h} \delta_{g,h}
                      \delta_{(g,*)}\cdot v \tnsr \delta_{(h,*)}\cdot w\\
  &= \sum_g \delta_{(g,*)}\cdot v \tnsr \delta_{(g,*)}\cdot w\\
\end{align*}

In short, $Q_{\lmb^1,\mu^1}$ is projection onto those
$V_{(g_1,g_2)} \tnsr W_{(h_1,h_2)} \subseteq V \tnsr W$
such that $g_1 = h_1$,
so that
\[
  (V \tnsr W)_{(g,h)} = \sum_{h_1 h_2 = h} V_{(g,h_1)} \tnsr W_{(g,h_2)}
\]

Thus, under this tensor product,
$\ZZC$ decomposes as a direct sum of $|G|$ copies
of $\ZC$ as monoidal categories:
\begin{align*}
  \ZZC &\simeq_\tnsr \dirsum_g \ZC \\
  V &\mapsto (\delta_{(g,*)} \cdot V)_g
\end{align*}

\hfill $\triangle$
\end{example}

\section{Concluding remarks, Future directions}
\label{sec:conclusion}

Recall that our motivation for constructing $\ZZC$
is to understand the extended Crane-Yetter TQFT,
and $\ZZC \cong \Zcy(\punctorus)$.
We have a similar construction of the category associated
to each open surface.
For example, the thrice-punctured sphere can also be thought of
as built out of two annuli, except that instead of plumbing
to get the once-punctured torus (see figures in \rmkref{rmk:pop}),
you identify a segment on the boundary of each annulus.
This results in a category with similar looking objects,
$(X,\lmb^1,\lmb^2)$, except that the compatibility relation
between the half-braidings $\lmb^1,\lmb^2$ is different:
instead of COMM, they should satisfy
the following variant, which was mentioned in
\rmkref{rmk:COMM_strands}:
\[
  \includegraphics[height=70pt]{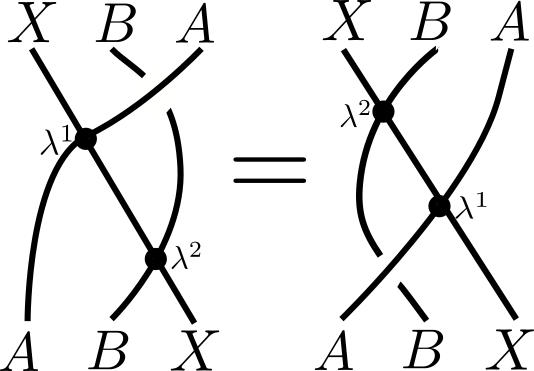}
\]
Note now the braidings used on both sides are
the same, where they were different in COMM.
While both the once-punctured torus and
the thrice-punctured sphere can be obtained from
a disk by attaching two 1-handles to the boundary,
the crucial difference is that
the 1-handles ``link" in the former
but don't in the latter.
In general, for a surface $\Sigma_{g,n}$ of
genus $g$ with $n > 0$ punctures,
upon presenting it as a disk with $2g + n-1$
1-handles attached to the boundary,
the associated category should consist of objects
of the form $(X,\lmb^1,\ldots,\lmb^{2g+n-1})$,
where $\lmb^i$ are half-braidings on $X$,
and pairs of $\lmb^i$'s satisfy some variant of COMM or the
above relation depending on whether the corresponding
1-handles are ``linked" or not.
The idea here is reminiscent of
the way ``gluing patterns" of a surface
are used to compute factorization homology in \ocite{BBJ1}.


\section{Appendix}
\subsection{Useful Lemmas for Computing with String Diagrams}
We record some useful results about string diagrams,
adapted mostly from \ocite{K}, \ocite{BalK}.\\

Let us denote
\[
  \eval{V_1,\ldots,V_n} = \Hom_\cC(\one, V_1\tnsr\ldots\tnsr V_n)
\]
There is a symmetric non-degenerate pairing
\[
\begin{tikzcd}
  \eval{V_1,\ldots,V_n} \tnsr \eval{V_n^*,\ldots,V_1^*}
    \ar[r] \ar[d, "P"]
  & \kk \ar[d, "="] \\
  \eval{V_n^*,\ldots,V_1^*} \tnsr \eval{V_1,\ldots,V_n}
    \ar[r]
  & \kk
\end{tikzcd}
\]
where $P$ is the usual swapping $W \tnsr W' \to W' \tnsr W$
of vector spaces,
and the horizontal arrows are
given by ($V = V_1\tnsr\ldots\tnsr V_n$,
$\vphi \in \eval{V}, f \in \eval{V^*}$)

\begin{align*}
\label{eqn:pairing}
  (\vphi,f) &=
  (\one \cong \one\tnsr\one
    \xrightarrow{\vphi \tnsr f} V \tnsr V^*
    \xrightarrow{\wdtld{ev}_V} \one) \\
  (f,\vphi) &=
  (\one \cong \one\tnsr\one
    \xrightarrow{f \tnsr \vphi} V^* \tnsr V
    \xrightarrow{\ev_V} \one)
\end{align*}

We will use the following convention:
if a figure contains a pair of vertices,
one with outgoing edges labelled $V_1,\ldots,V_n$,
and the other with outgoing edges labelled
$V_n^*,\ldots,V_1^*$,
and the vertices are labelled by the same greek letter,
say $\alpha$,
it will stand for
\[
  \includegraphics[height=60pt]{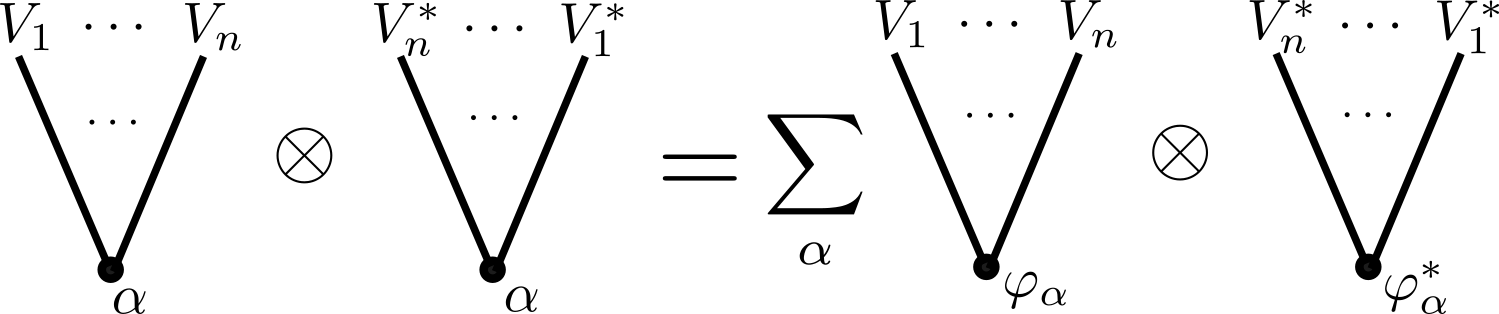}
\]
where $\{\vphi_\alpha\}, \{\vphi_\alpha^*\}$
are a pair of dual bases of 
$\eval{V_1,\ldots,V_n}, \eval{V_n^*,\ldots,V_1^*}$
respectively,
dual respect with respect to the pairing above.\\

We also establish the following convention:
when $\alpha$'s (or any pair of greek letters)
appear with a bar $\overline{\alpha}$,
and two pairs of edges are labelled with small-case latin alphabets
it will stand for the following sum:
\begin{align*}
  \includegraphics[width=0.8\linewidth]{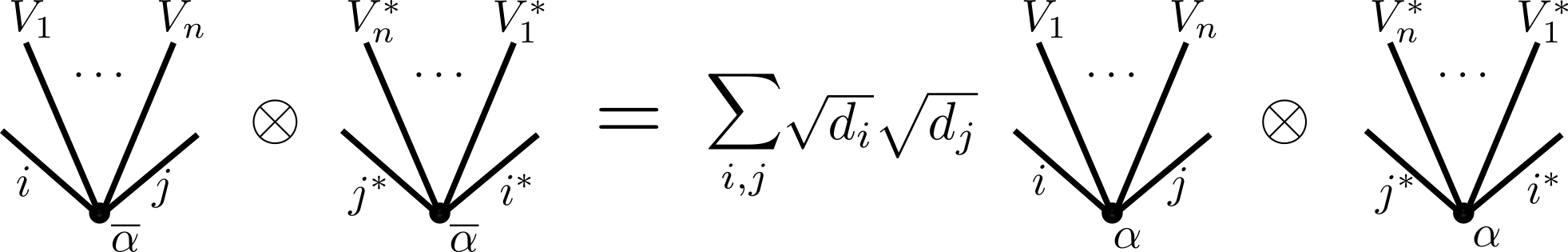}
\end{align*}
The $i,j$ will also often be omitted when the context is clear.\\

Here are some lemmas that are mostly adapted from
\ocite{BalK} and \ocite{K}.
We leave out the proofs, which are standard.

\begin{lemma}
\label{lem:identity_combine}
\begin{align*}
  \includegraphics[height=100pt]{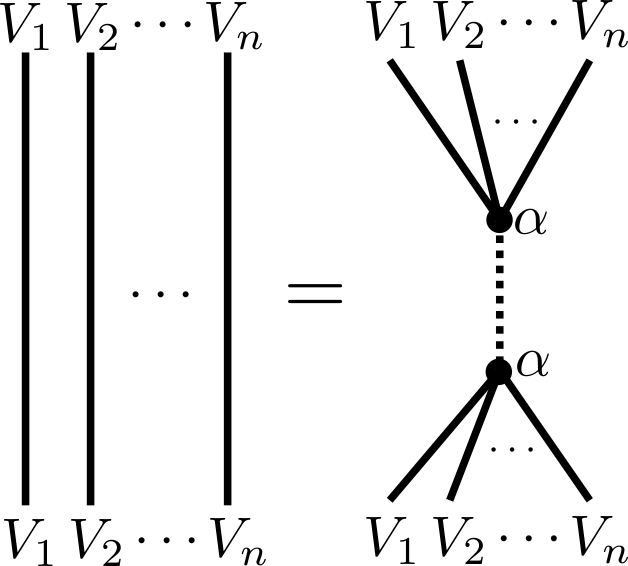}
\end{align*}
(recall the convention of dashed line in the introduction)
\end{lemma}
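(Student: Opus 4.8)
The plan is to establish this as the standard completeness (resolution-of-identity) relation in a semisimple category, the engine being the non-degeneracy of the symmetric pairing on the spaces $\eval{V_1,\ldots,V_n} = \Hom_\cC(\one, V_1\tnsr\cdots\tnsr V_n)$ introduced just above. First I would unpack the diagram in terms of the conventions already fixed: the paired vertices labelled $\alpha$ and $\overline{\alpha}$ range over dual bases $\{\vphi_\alpha\}$, $\{\vphi_\alpha^*\}$ of $\eval{\cdots}$ and $\eval{\cdots^*}$ with respect to that pairing, with the bar absorbing the fixed square roots $\sqrt{d_i}\sqrt{d_j}$, while the dashed rung contributes the sum $\sum_{j\in J} d_j$ over simple labels. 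Thus the left-hand side is literally $\sum_{j}\sum_\alpha \iota_\alpha\circ\pi_\alpha$, where $\pi_\alpha$ projects onto the $X_j$-isotypic part of the relevant tensor product and $\iota_\alpha$ includes it back.

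Next, since both sides are morphisms of the semisimple category $\cC$, it suffices to verify the equation after decomposing the source into simples and testing against each simple summand; by Schur's lemma every relevant $\Hom$-space between simples is zero or one-dimensional. On each isotypic component the claim reduces to the duality relation $\pi_\alpha\circ\iota_\beta=\delta_{\alpha\beta}\,\id_{X_j}$ together with the completeness $\sum_\alpha \iota_\alpha\circ\pi_\alpha=\id$, which are exactly the defining properties of a pair of bases dual with respect to a non-degenerate pairing. Summing over the simple label $j$ then reassembles the identity on the two ambient strands, giving the right-hand side.

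The step I expect to be the main obstacle is the bookkeeping of the dimension factors. The weight $d_j$ carried by the dashed line, the $\sqrt{d_i}\sqrt{d_j}$ hidden inside $\overline{\alpha}$, and the loop value $d_k=\dim_\cC X_k$ produced whenever a strand is closed off must cancel so that the total coefficient is exactly $1$ rather than a stray power of $\cD$ or $\sqrt{d_k}$. This is precisely why the square roots $\sqrt{d_j}$ were fixed at the outset, and why the pairing must be checked to be genuinely symmetric: that symmetry (the commuting square defining the pairing) is where the pivotal/spherical structure of $\cC$ enters and guarantees that the two ways of closing a diagram agree. Once these normalizations are reconciled, the remainder is the routine partition-of-unity argument, which is why the proof can be omitted as standard.
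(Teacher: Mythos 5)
Your proposal is correct and takes essentially the same route as the paper, which in fact omits the proof of this lemma entirely, declaring it ``standard'' and deferring to Balsam--Kirillov and Kirillov: what you give is exactly that standard resolution-of-identity argument (semisimple decomposition, Schur's lemma, dual bases for the trace pairing, with the $d_j$ weight carried by the dashed line absorbing the loop-value discrepancy between the trace pairing and the composition pairing so that the total coefficient is $1$). The normalization bookkeeping you flag as the main obstacle is indeed the only point requiring care, and your accounting of where the $\sqrt{d_i}\sqrt{d_j}$ factors and sphericality enter is accurate.
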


\begin{lemma}
\label{lem:disks_variant}
(Variant of Lemma 3.6 in \ocite{K})
For $\Phi : V \to W$,
\[
  \includegraphics[width=0.8\textwidth]{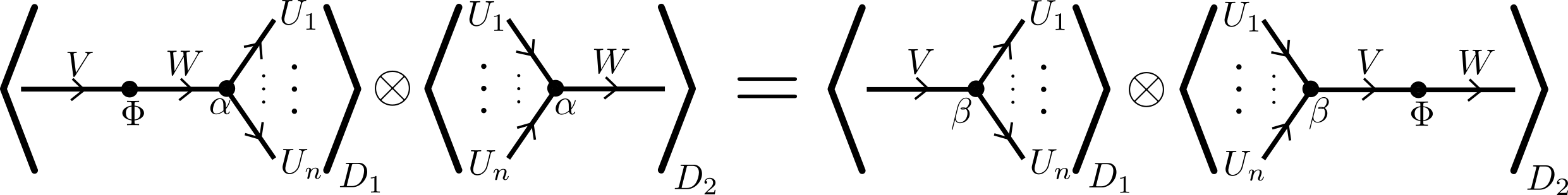}
\]

\end{lemma}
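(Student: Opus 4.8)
The plan is to establish the identity as an equation between two morphisms in $\cC$, exploiting that both sides depend $\kk$-linearly on $\Phi$ and naturally on $V$ and $W$. First I would reduce to the case of simple $V$ and $W$: since $\cC$ is semisimple every object splits as a direct sum of the $X_j$, and both sides are additive in each of the two ``outer'' objects, so it suffices to take $V = X_i$ and $W = X_j$ with $i,j \in J$. In that case $\Phi \in \Hom_\cC(X_i,X_j)$ is either zero (when $X_i \not\cong X_j$) or a scalar multiple of an isomorphism, so by linearity in $\Phi$ it is enough to verify the identity for $\Phi = \id_{X_i}$. This turns the problem into a purely combinatorial statement about a fixed diagram.

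Next I would unfold the notational conventions set up immediately before the lemma. The paired $\overline{\alpha}$-vertices denote a sum over a dual basis of the appropriate spaces $\eval{V_1,\ldots,V_n}$ and $\eval{V_n^*,\ldots,V_1^*}$ with respect to the symmetric nondegenerate pairing recalled above, carrying the hidden $\sqrt{d_\bullet}$ normalizations, while each dashed loop denotes a sum over the simples $X_j$ weighted by $d_j$. Substituting these definitions rewrites both sides as finite sums of ordinary string diagrams in $\cC$, so that the claim becomes a direct comparison of two such sums.

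The heart of the argument is then to collapse the dual-basis sums using completeness of the pairing: inserting a complete set of states along a strand and contracting against the dual basis recovers the identity on that strand. This is precisely the mechanism packaged in \lemref{lem:identity_combine}, which I would invoke directly rather than rederive; combined with the standard zig-zag (snake) identities for the (co)evaluations and the sphericality of $\cC$, the more intricate side of the equation telescopes onto the simpler side. Because every step is a local graphical move, naturality in $V$ and $W$ restores the general case once the simple case is settled.

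The main obstacle is purely one of bookkeeping the scalar normalizations: the $\sqrt{d_j}$ factors concealed in $\overline{\alpha}$ and the $d_j$ weights attached to the dashed loops must conspire to produce exactly the global coefficient appearing on the right-hand side, and an off-by-a-dimension-factor slip is the likeliest error. I would pin down every constant by first testing the identity on a minimal configuration --- for instance $V = W = \one$, or a single simple strand --- where the relevant sums reduce to one or two terms and the coefficients can be read off unambiguously, and only then assert the general formula.
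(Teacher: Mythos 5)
The paper itself gives no proof of this lemma: it is one of the appendix lemmas the authors declare ``standard,'' adapted from \ocite{K} (it is a variant of Lemma 3.6 there) and \ocite{BalK}. The argument being pointed to is a one-step dual-basis computation: writing $\Phi_*$ for ``apply $\Phi$ to the $V$-leg,'' expand $\Phi_*\vphi_\alpha = \sum_\beta (\Phi_*\vphi_\alpha, \psi^\beta)\,\psi_\beta$ in the dual bases attached to $W$; the coefficient $(\Phi_*\vphi_\alpha, \psi^\beta)$ is a closed diagram in which $\Phi$ slides across the evaluation to become $\Phi^*$ on the dual leg, i.e.\ $(\Phi_*\vphi_\alpha, \psi^\beta) = (\vphi_\alpha, (\Phi^*)_*\psi^\beta)$ where $(\Phi^*)_*$ applies $\Phi^*$ to the $W^*$-leg; resumming gives exactly the asserted identity. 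The only graphical input is the zig-zag characterization of the dual morphism, and the $\sqrt{d_i}\sqrt{d_j}$ normalizations and latin-labelled spectator legs hidden in $\overline{\alpha}$ ride along identically on both sides.

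Your reduction-to-simples route is genuinely different and can be completed, but it needs two corrections. First, the step you wave through --- ``both sides are additive in the two outer objects'' --- is where all the content sits: as $V$ varies, the two sides live in different spaces, so additivity really means checking how dual bases behave under $V = \dirsum_i V_i$ (cross-pairings vanish; a basis of $\langle \cdots V_i \cdots \rangle$ embeds by post-composing $\iota_i$ on the $V$-leg, and its dual basis embeds by post-composing $p_i^*$ on the $V^*$-leg, in the same way on both sides of the lemma). That verification, not linearity in $\Phi$, is the actual proof and must be written out. Second, your claimed ``heart of the argument'' is misplaced: after your own reductions to $V = W = X_i$ and $\Phi = \id$, the two sides are \emph{literally the same} dual-basis sum, and their equality is just the basis-independence of $\sum_\alpha \vphi_\alpha \tnsr \vphi^\alpha$ --- the fact that makes the paper's greek-letter convention well defined in the first place. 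There is nothing to telescope, and \lemref{lem:identity_combine} (the resolution of identity with dashed loops) plays no role here, nor does sphericality. So: prove your first step, drop your third, and your argument is correct; it trades the short pairing computation that the paper cites for semisimplicity-based bookkeeping --- a workable but longer exchange.
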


\begin{lemma}
\label{lem:Gamma_hf_brd}
$\Gamma$, as defined in \prpref{prp:adjoint},
is a half-braiding.
\end{lemma}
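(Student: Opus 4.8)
The plan is to verify directly that $\Gamma$ satisfies the two defining properties of a half-braiding from the definition of $\ZC$: naturality in the auxiliary object, and compatibility with tensor products (including the normalization $\Gamma_\one = \id$). Recall from \prpref{prp:adjoint} that $\Gamma$ is the half-braiding on the induced object $\cI X = (\induct{X}, \Gamma)$, assembled out of the trivalent $\alpha$-vertices together with the sum over simple objects (with the $d_j$-weights hidden in the $\overline{\alpha}$ notation). Everything below is graphical and uses only the spherical fusion structure of $\cC$; in particular no braiding on $\cC$ is required, consistent with $\cI$ being defined for any spherical fusion category.

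For naturality, I would check that for $f : A \to A'$ in $\cC$ one has $\Gamma_{A'} \circ (f \tnsr \id_{\induct{X}}) = (\id_{\induct{X}} \tnsr f) \circ \Gamma_A$. In the graphical calculus this amounts to sliding $f$ along the $A$-strand from one side of the diagram to the other; since $f$ does not interact with the summation over simples, the only nontrivial point is pulling $f$ through the trivalent $\alpha$-vertices. This is exactly the content of \lemref{lem:disks_variant}, which says a morphism may be absorbed into and re-emitted from the $\alpha$-pairing; applying it with $\Phi = f$ gives the desired equality immediately.

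The substantive step is the tensor-product condition, namely $\Gamma_{A \tnsr B} = (\Gamma_A \tnsr \id_B) \circ (\id_A \tnsr \Gamma_B)$ up to the suppressed associativity constraints. I would expand the right-hand side: moving $B$ through via $\Gamma_B$ and then $A$ through via $\Gamma_A$ produces two successive reconnections at the $\alpha$-vertices, joined by an intermediate summation over a simple object $X_k$ weighted by $d_k$. The key identity is \lemref{lem:identity_combine}, which collapses precisely such a composite of two dashed-line (fusion-then-split) configurations summed over the intermediate simple into a single one. After applying it, the two pairs of $\alpha$-vertices fuse into the single pair appearing in $\Gamma_{A \tnsr B}$, and the separate $A$- and $B$-strands are recognized as the single strand $A \tnsr B$. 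The normalization $\Gamma_\one = \id$ then drops out as the $B = \one$ specialization, since fusing with the unit object trivializes the reconnection.

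The main obstacle is the bookkeeping in this last step: one must track the simple-object labels carried by the several strands (the $X_i$ on the left, the $X_j$ on the right, and the intermediate $X_k$), keep the $d_j$ and $d_k$ weights implicit in $\overline{\alpha}$ in their correct places, and verify by isotopy that the sub-diagram produced is exactly of the form to which \lemref{lem:identity_combine} applies. I do not expect any genuinely new idea beyond \lemref{lem:identity_combine} and \lemref{lem:disks_variant}; the work lies entirely in arranging the composite diagram so that these two appendix lemmas can be quoted cleanly.
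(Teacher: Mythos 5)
Your proposal is correct and follows essentially the same route as the paper's proof: naturality is exactly the application of \lemref{lem:disks_variant}, and the tensor-product condition is a short graphical computation in which \lemref{lem:disks_variant} (applied with $\Phi$ a dual-basis vertex $\beta$) and \lemref{lem:identity_combine} collapse the two successive crossings, with their intermediate sum over simples, into the single crossing defining $\Gamma_{A\tnsr B}$. The only cosmetic difference is that the paper invokes \lemref{lem:disks_variant} a second time inside the tensor-product step rather than only for naturality, which your closing paragraph already anticipates.
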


\begin{proof}
Naturality is immediate from \lemref{lem:disks_variant} above,
and respecting tensor product is checked as follows:
\[
  \includegraphics[height=100pt]{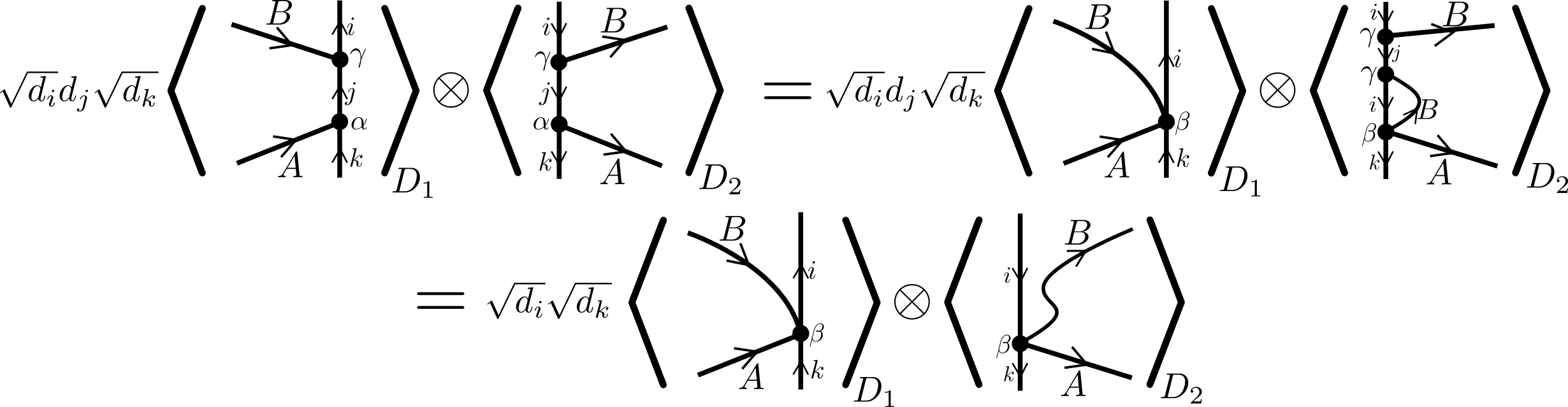}
\]
(Recall our convention of summing over all latin lower case labels.)
The first equality uses \lemref{lem:disks_variant}
with $\Phi = \beta$,
and the second follows from \lemref{lem:identity_combine}.
\end{proof}

The following lemma is often used when the half-braiding $\Gamma$
shows up, allowing us to switch the ``main branch" with the ``side branch"
(see proof of \prpref{prp:I1_tensor}).
\begin{lemma}
\label{lem:Gamma_switch}
\[
  \includegraphics[height=60pt]{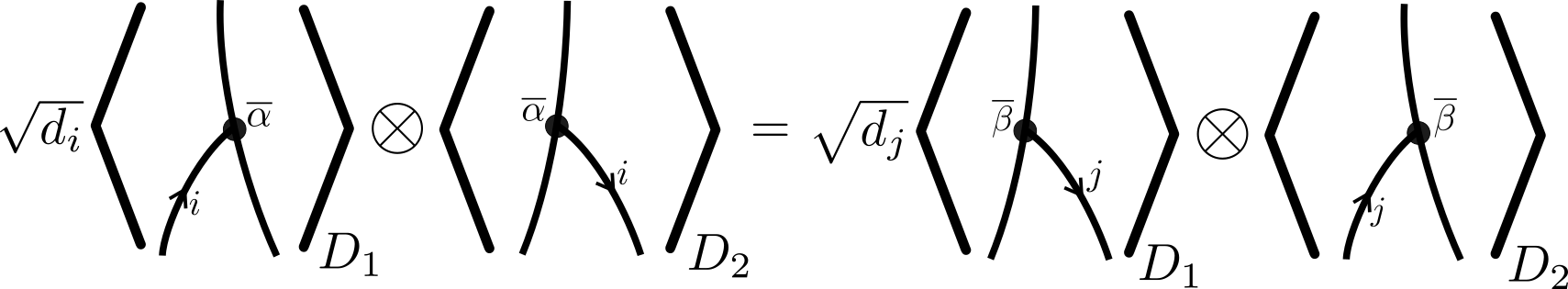}
\]
\end{lemma}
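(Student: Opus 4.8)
The plan is to reduce the asserted identity to the defining property of the $\overline{\alpha}$-vertices, together with the two structural lemmas already established for manipulating them. First I would expand both sides using the definition of $\Gamma$ from \prpref{prp:adjoint}: on each side the only nontrivial ingredient is the pair of $\overline{\alpha}$-vertices (which carry the implicit sum over simples and the hidden $\sqrt{d_j}\,\sqrt{d_{j'}}$ normalizations), together with the strand that is being transported past $\induct{}$. The point is that the "main branch'' carrying $X$ and the "side branch'' enter the same $\overline{\alpha}$-configuration, so the claimed switch should amount to rewriting one such configuration as another rather than to any genuinely new computation.

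The key step is to apply \lemref{lem:disks_variant}, taking $\Phi$ to be the identity where no strand must cross and the relevant structural isomorphism where one does; this lets me slide a morphism through the trivalent $\alpha$-vertex and thereby reposition the side branch relative to the main branch. Once the $\overline{\alpha}$-vertices have been moved out of their original position, I would invoke the symmetry of the pairing on $\eval{V_1,\ldots,V_n}$ recorded in the Appendix, i.e. $(\vphi,f)=(f,\vphi)$, to identify the resulting dual-basis sum with the one on the right-hand side of the claim. Because this pairing is symmetric and nondegenerate, no genuine choice of basis intervenes, and the dual-basis sum is intrinsic. A final application of \lemref{lem:identity_combine} recombines the dashed (summed-over-simples) strands into the desired normal form, completing the identification.

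The main obstacle I anticipate is bookkeeping rather than anything conceptual: one must check that the $\sqrt{d_j}$ factors buried in $\overline{\alpha}$ and the weights $d_j$ attached to the dashed lines match exactly after the vertices are rearranged, since a spurious factor of $d_j$ or $\sqrt{d_j}$ would spoil the equality. This is precisely the kind of normalization tracking that also governs the proof of \lemref{lem:Gamma_hf_brd}, and once the coefficients are reconciled the identity follows formally. This is why the lemma can safely be classified as standard and its proof omitted.
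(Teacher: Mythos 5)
The paper never actually proves this lemma: it is one of the Appendix string-diagram identities introduced with ``We leave out the proofs, which are standard,'' adapted from \ocite{BalK} and \ocite{K}, so there is no written argument to compare against. Your sketch assembles exactly the machinery those sources use for such identities --- \lemref{lem:disks_variant} to slide a morphism through the $\overline{\alpha}$-vertex pair, the symmetry of the dual-basis pairing, \lemref{lem:identity_combine}, and the $\sqrt{d_j}$ normalization bookkeeping --- and so is consistent with the omitted standard argument; the one soft spot is that the crux of the computation, namely the precise morphism $\Phi$ to be slid through the vertices (a composition involving (co)evaluations that bends the crossing strand, not merely ``the identity or a structural isomorphism''), is exactly what your sketch leaves unspecified.
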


\begin{lemma}
\label{lem:charge_conservation}
(Charge conservation) When $\cC$ is modular,
\[
  \includegraphics[height=60pt]{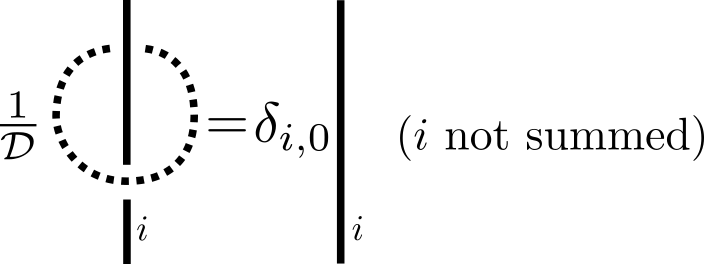}
\]
\end{lemma}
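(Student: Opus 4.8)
The plan is to recognize this as the standard ``killing'' property of the Kirby (regular) color in a modular category, and to prove it by reducing to a computation with the $S$-matrix. Recall that the dashed loop denotes the sum $\omega = \dirsum_j d_j X_j$ over simple objects weighted by their dimensions; the content of charge conservation is that encircling a strand by $\omega$ annihilates every nontrivial simple charge and acts by the scalar $\cD$ on the trivial one. So the first step is to read both sides of the asserted identity as endomorphisms in $\cC$ of the object carried by the encircled strand(s), and to verify that the left-hand side is exactly such an $\omega$-encircling operator, modulo the struts whose labels are forced to agree by \corref{cor:hom_IX_IY}.

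Next I would reduce to the case of a single simple strand. By naturality of the braiding together with additivity, it suffices to evaluate the encircling operator on each simple $X_k$. By Schur's lemma this operator is a scalar $\sigma_k \cdot \id_{X_k}$, and $\sigma_k$ can be extracted by closing up the $X_k$-strand using the spherical structure: taking the quantum trace turns the $\omega$-loop linked with $X_k$ into a weighted sum of Hopf-link invariants, so that $\sigma_k\, d_k = \sum_j d_j\, \tilde{s}_{jk}$, where $\tilde{s}_{jk} = \tr(c_{X_k,X_j}\circ c_{X_j,X_k})$ are the unnormalized $S$-matrix entries.

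The modularity of $\cC$ enters precisely here, through nondegeneracy of the $S$-matrix: in the normalization $\cD = \sum_j d_j^2$ one has $\tilde{s}^2 = \cD\, C$ with $C$ the charge-conjugation permutation, together with $\tilde{s}_{0j} = d_j$, whence $\sum_j d_j\, \tilde{s}_{jk} = \sum_j \tilde{s}_{0j}\, \tilde{s}_{jk} = \cD\, \delta_{0,k}$ (using $0^* = 0$). Therefore $\sigma_k = \cD\,\delta_{0,k}$, so the encircling operator is $\cD$ times the projection onto the trivial isotypic component, which is the desired identity. I expect the only genuine obstacles to be bookkeeping: matching the precise graphical form in the figure — in particular handling the struts so that, as remarked after \lemref{lem:fully_faithful}, killing the charge on one strand forces it on the paired strand — and tracking the normalization constants (the $d_j$ weights, the factors of $\cD$, and the suppressed pivotal maps $\delta_X$) through the closing-up step so that the scalars come out exactly right.
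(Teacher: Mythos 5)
Your proof is correct in substance, but note what the paper actually does here: its entire proof is the citation \ocite{BakK}*{Cor 3.1.11}, this lemma being exactly the standard ``killing property'' of the Kirby color $\omega = \sum_j d_j X_j$. So you are not paralleling the paper's argument (there is none) but supplying the proof of the cited result. Your reduction to a single simple strand, the extraction of the scalar by closing up, $\sigma_k d_k = \sum_j d_j \tilde{s}_{jk}$, and the conclusion that the encircling operator equals $\cD$ times the projection onto the trivial isotypic component (in the paper's normalization $\cD = \sum_j d_j^2$, so no stray square roots) all match the statement and the way the lemma is used in \lemref{lem:fully_faithful}.

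The one point you should repair is the logical order in the step where modularity enters. You invoke $\tilde{s}^2 = \cD C$; that identity is true, but in the standard graphical developments (including the paper's own reference) it is usually \emph{derived from} the killing property, by summing a chain of Hopf links over the middle color and applying killing to the two strands passing through the $\omega$-loop. As written, your argument therefore risks circularity unless you either (i) appeal to a proof of $\tilde{s}^2 = \cD C$ that does not pass through killing, e.g.\ the character-theoretic analysis of the Verlinde algebra in \ocite{EGNO}, or (ii) replace that step by the direct argument from nondegeneracy: writing $v_k = \sum_j d_j \tilde{s}_{jk}$, the fusion identity $\tilde{s}_{ik}\tilde{s}_{jk} = d_k \sum_l N_{ij}^l \tilde{s}_{lk}$ together with $\sum_j d_j N_{ij}^l = d_i d_l$ gives $v_k\,\tilde{s}_{ik} = d_i d_k\, v_k$ for every $i$; hence either $v_k = 0$ or the $k$-th column of $\tilde{s}$ is $d_k$ times the $0$-th column, which by invertibility of $\tilde{s}$ forces $k = 0$, and $v_0 = \sum_j d_j^2 = \cD$. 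With substitution (ii) your write-up becomes the textbook proof, needing only invertibility of the $S$-matrix rather than the full $SL_2(\mathbb{Z})$ relations.
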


\begin{proof}
  See e.g. \ocite{BakK}*{Cor 3.1.11}.
\end{proof}

\comment{
TODO add sliding property, the version
for $P_{\lmb,\mu}$, and also show $Q_{\lmb,\mu}$.
}

\subsection{Group Actions on Categories by Generators and Relations}
\label{sec:group_action}

Let $G$ be a group acting on a category $\cA$,
in the sense of \ocite{EGNO}*{Section 2.7}.
This consists of an auto-equivalence $T_g : \cA \rcirclearrowleft$
for each $g\in G$,
and natural isomorphisms
$\gamma_{g,h}: T_g \circ T_h \to T_{gh}$
satisfying the cocycle condition
\[
\begin{tikzcd}
  T_gT_hT_k \ar[r, "T_g\gamma_{h,k}"]
    \ar[d, "\gamma_{g,h}T_k"]
  & T_gT_{hk} \ar[d, "\gamma_{g,hk}"]\\
  T_{gh}T_k \ar[r, "\gamma_{gh,k}"]
  & T_{ghk}
\end{tikzcd}
\]
For convenience, we will refer to this as the
\emph{usual} definition of a group action,
and in particular refer to the above as the
\emph{usual cocycle condition}.\\

It is convenient to rephrase this as simply a monoidal functor
\[
  (F,J): \Cat(G) \to_\tnsr \Aut(\cA)
\]
where $\Cat(G)$ is the monoidal category whose objects are
elements of $G$,
with only identity morphisms,
and the monoidal structure is the group operation.
Then in this interpretation,
$T_g = F(g)$,
the $\gamma$'s correspond to the natural isomorphism
$J_{g,h}: F(g) \circ F(h) \to F(gh)$,
and the cocycle condition is just the hexagon axiom
relating the associativity constraints with $J$
(both $\Cat(G)$ and $\Aut(\cA)$ are strict,
so the hexagon is just a square).\\

When $G$ is presented by generators and relations,
say $\SLZ = \eval{s,t|r_1,r_2}$ as in \secref{sec:sl2z},
we would like to describe a $G$-action on $\cA$
by generators and relations as well.
In a typical group action, say on some set $X$,
it suffices to provide an automorphism of $X$ for each generator,
and check that the relations are satisfied.
For an action on a category,
one provides an auto-equivalence for each generator,
a natural isomorphism for each relation,
and check certain equalities between compositions of such
natural isomorphisms;
these are the analogs of $T_g$, $\gamma_{g,h}$,
and the cocycle condition in the usual definition of a group
action on a category given above.
It is the goal of this note to spell this out in more detail.\\

In this note, we fix a group $G$ and a presentation of it,
$G = \eval{g_i|r_j}$.
Since we will be working with unreduced words,
we will include among the relations the trivial ones
$g_i g_i^\inv = 1$, $g_i^\inv g_i = 1$.
All words (henceforth assumed to be unreduced)
will be in the generators $g_i$
(and their inverses $g_j^\inv$).
We will think of a relation $r_j$
as a \emph{move} to transform one word into another;
more precisely, it is given by
a pair of words $v_{j,1}$, $v_{j,2}$,
so that for any words $x,y$,
we may transform $xv_{j,1}y$ into $xv_{j,2}y$
when working in $G$.
When we need to be precise,
we will denote this move by $x r_j y$,
and the inverse move by $x r_j^\inv y$.
(Ambiguities can arise, for example,
``applying $r_j$ to $v_{j,1}v_{j,1}$" could mean
$r_j v_{j,1} : v_{j,1} v_{j,1} \to v_{j,2} v_{j,1}$
or
$v_{j,1} r_j : v_{j,1} v_{j,1} \to v_{j,1} v_{j,2}$.)\\

First, let us give a definition:

\begin{definition}
\label{def:pre_grp_action_genrel}
Let $G=\eval{g_i|r_j}$ be a group presentation as above,
and let $\cA$ be a category.
A \textbf{\emph{pre-$G$-action on $\cA$ given by generators and relations}}
consists of the following data:
\begin{itemize}
  \item For each generator $g_i$,
    \textbf{auto-equivalences} $U_{g_i}, U_{g_i^\inv}: \cA \rcirclearrowleft$.
  \item For each relation $r_j : v_{j,1} = v_{j,2}$,
    a \textbf{natural isomorphism}
    $\gamma_j: U_{v_{j_1}} \to U_{v_{j_2}}$,
    where we write $U_w = U_{a_1}\ldots U_{a_k}$ for a word $w=a_1\ldots a_k$.
\end{itemize}
\hfill $\triangle$
\end{definition}

From $\gamma_j$, we also get, for words $x,y$, a natural isomorphism
$U_x \gamma_j U_y : U_{x v_{j_1} y} \to U_{x v_{j_2} y}$.
We will sometimes abuse notation and denote this natural isomorphism
as $\gamma_j$ too.\\

\begin{definition}
\label{def:grp_action_genrel}
In the set up of \defref{def:pre_grp_action_genrel},
a \textbf{\emph{$G$-action on $\cA$ given by generators and relations}}
is a pre-$G$-action that satisfies the following \textbf{cocycle condition}:
a sequence of moves (i.e. application of relation) 
$w_1 \xrightarrow[]{r_{j_1}} \ldots \xrightarrow[]{r_{j_p}} w_2$
gives rise to a sequence of natural isomorphisms
$U_{w_1} \xrightarrow[]{\gamma_{j_1}} \ldots \xrightarrow[]{\gamma_{j_p}} U_{w_2}$
whose composition is some natural isomorphism
$\gamma: U_{w_1} \to U_{w_2}$;
the cocyle condition says that
any sequence of moves from $w_1$ to $w_2$
results in the same $\gamma$.
\hfill $\triangle$
\end{definition}

\begin{remark}
We do not need to include the trivial relations $g_ig_i^\inv=1$
if we can guarantee that the $U_{g_i}$'s are isomorphisms,
and $U_{g_i^\inv} = U_{g_i}^\inv$.
This was the case in \secref{sec:sl2z}.
\hfill $\triangle$
\end{remark}

Now we justify this definition.
Let us first sketch an approach that is more intuitive.
Consider the following CW-complex $\Xi$:
the 0-skeleton/vertices is the set of unreduced words,
and for words $x,y$ and relation $r_j$,
there is a 1-cell $x r_j y$ going from $x v_{j,1} y$
to $x v_{j,2} y$.
The set of connected components is clearly
in bijective correspondence with $G$.
(We can even impose an H-group structure on $\Xi$
so that the obvious quotient map $\Xi \to G$
is a map of H-groups,
but since we are just giving intuition,
we leave this discussion to the more formal discussions to come.)\\

Given a pre-$G$-action (as in \defref{def:pre_grp_action_genrel}),
we can assign to a vertex $w$ the auto-equivalence $U_w$,
and to a 1-cell $x v_{j_1} y \to x v_{j_2} y$
we can assign the natural isomorphism
$U_x \gamma_j U_y : U_{x v_{j_1} y} \to U_{x v_{j_2} y}$.
Then the cocycle condition in \defref{def:grp_action_genrel}
means that for any loop beginning and ending at a vertex $w$,
the composition of natural isomorphisms
encountered along the loop is just the identity
$\id_{U_w} : U_w \rcirclearrowleft$.\\

So if we have a $G$-action in the sense of \defref{def:grp_action_genrel},
we can get a usual action of $G$
by picking a representative word $w_g$ for each $g\in G$,
and specifying $T_g = U_{w_g}$,
and $\gamma_{g,h}' : T_g T_h = U_{w_g w_h}
\xrightarrow[]{\gamma} U_{w_{gh}} = T_{gh}$,
where $\gamma$ is the appropriate composition of natural
isomorphisms - by the cocyle condition in the sense of
\defref{def:grp_action_genrel},
any choice gives the same natural isomorphism.
The cocycle condition for $T_g, \gamma_{g,h}'$
is automatically satisfied.\\

Now let us give a more precise discussion.
Let $G = \eval{g_i|r_j}$ as above.
Consider the following monoidal category $\wdtld{\cG}$:
its objects are all unreduced words in $g_i$'s.
The morphisms are given by compositions of applications of $r_j$'s;
more precisely, we consider the arrows
$q_{j,x,y}: x v_{j,1} y \to x v_{j,2} y$
for all words $x,y$,
and then a morphism $w_1\to w_2$ in $\wdtld{\cG}$
is just a (possibly empty) composable sequence of such arrows
or their reverse, reduced in the sense that an arrow and its
reverse cancel out.
(If we didn't include the trivial relations among the $r_j$'s,
we may end up with no morphisms between $g_ig_i^\inv$
and the empty word.)
Alternatively,
the set of morphisms from $w_1$ to $w_2$
is the set of homotopy classes of paths
from $w_1$ to $w_2$ in $\Xi$,
the CW-complex considered above.
The morphisms consisting of a single $q_{j,x,y}$
will be called \emph{simple}.
The monoidal structure on $\wdtld{\cG}$
is given by concatenation of words for objects,
and for morphism,
if we have morphisms $f: w_1\to w_2$ and $g: w_1'\to w_2'$,
we set
\[
  f\tnsr f' = f \circ f': w_1w_1'
    \xrightarrow[]{w_1 \cdot f'} w_1w_2'
    \xrightarrow[]{f \cdot w_2'} w_2w_2'
\]
It is easy to see that this forms a well-defined monoidal category.\\

Next consider $\cG$ to be the category with the same objects,
but with fewer morphisms: there is a unique morphism
$w_1\to w_2$ in $\cG$ if there is at least one morphism in $\wdtld{\cG}$,
and there are no morphisms otherwise. In other words,
\[
  \Hom_\cG(w_1,w_2) =
  \begin{cases}
    \{*\} \textrm{ if }\Hom_{\wdtld{\cG}}(w_1,w_2) \neq \emptyset \\
    \emptyset \textrm{ otherwise}
  \end{cases}
\]
The monoidal structure on $\cG$ is also concatenation.
There is an obvious monoidal ``quotient functor"
$\mathcal{Q}: \wdtld{\cG} \to \cG$
which is the identity on objects,
and identifies all morphisms with common source and target.\\

There is a canonical functor $\pi: \cG \to \Cat(G)$
that sends a word to the corresponding element in $G$,
and the morphisms are sent to the identity morphisms.
Clearly this functor is fully faithful and essentially surjective,
and is monoidal (in a unique way),
so is an equivalence of monoidal categories.\\

Thus, we can describe a $G$-action on $\cA$
by giving a monoidal functor
\[
  (F,J) : \cG \to_\tnsr \Aut(\cA)
\]
The point of using $\cG$ instead of $\Cat(G)$
is that we know $\cG$ is somehow built out
of generators and relations.
Precomposing such a monoidal functor with
an inverse to $\pi: \cG \to \Cat(G)$
gives a monoidal functor
$(F',J'): \Cat(G) \to_\tnsr \Aut(\cA)$,
recovering a group action in the usual sense.
Note that any two such inverses are naturally isomorphic
by a unique natural isomorphism.
We say more at the end of this section.\\

Let us see how $(F,J)$ gives us a $G$-action
in the sense of \defref{def:grp_action_genrel}.
$F$ in particular gives us,
for each generator $g_i$, auto-equivalences
\begin{align*}
  U_{g_i} &:= F(g_i) : \cA \to \cA \\
  U_{g_i^\inv} &:= F(g_i^\inv) : \cA \to \cA
\end{align*}
For a word $w=a_1\ldots a_k$,
where $a_l = g_i \textrm{ or } g_i^\inv$,
we write $U_w = U_{a_1}\ldots U_{a_l}$.
$J$ gives us $U_{a_1 a_2} = F(a_1)F(a_2) \to F(a_1 a_2)$,
and similarly, successive applications of $J$'s gives us
\begin{align*}
  U_w = F(a_1) \ldots F(a_k)
  & \xrightarrow[]{F(a_1)\ldots F(a_{k-2}) J_{a_{k-1},a_k}}
    F(a_1) \ldots F(a_{k-2}) F(a_{k-1}a_k) \\
  & \hspace{10pt} \vdots \\
  & \xrightarrow[]{J_{a_1,a_2 \ldots a_k}}
    F(a_1\ldots a_k) = F(w)
\end{align*}
For brevity, we call the composition of these natural isomorphisms $J$ too.
(By the hexagon axiom, the order by which we
group the $a_l$'s together is immaterial.)
Then for each relation $r_j: v_{j,1} = v_{j,2}$,
we have
\[
  \gamma_j : U_{v_{j,1}} \xrightarrow[]{J} F(v_{j_1}) \xrightarrow[]{U(r_j)}
    F(v_{j_2}) \xrightarrow[]{J^\inv} U_{v_{j,2}}
\]

So $U_{g_i},U_{g_i^\inv},\gamma_j$ defines a pre-$G$-action
(as in \defref{def:pre_grp_action_genrel}),
and in fact satisfies the cocycle condition,
so it is a $G$-action (in the sense of \defref{def:grp_action_genrel}).
Indeed, suppose we have a path
$w_1 \xrightarrow[]{r_{j_1}} \ldots \xrightarrow[]{r_{j_p}} w_2$.
The resulting sequence of isomorphism compose to simply
$U_{w_1} \xrightarrow[]{J} F(w_1) \xrightarrow[]{F(r)} F(w_2)
  \xrightarrow[]{J^\inv} U_{w_2}$,
where $r$ is the unique morphism in $\Hom_\cG(w_1,w_2)$.
Since this natural isomorphism is independent of
the path we started with,
we see that we indeed have a $G$-action.\\

Conversely, suppose we are provided with a pre-$G$-action
$U_{g_i},U_{g_i^\inv}, \gamma_j$ (see \defref{def:pre_grp_action_genrel}).
We can easily construct a monoidal functor
\[
  (\wdtld{F}, \wdtld{J}): \wdtld{\cG} \to_\tnsr \Aut(\cA)
\]
as follows:
for a word $w = a_1\ldots a_k$,
we define
\[
  \wdtld{F}(w) := U_{a_1} \ldots U_{a_k}
\]
and make $\wdtld{J} = \id$.
For the simple morphism $q_{j,x,y} : x v_{j,1} y \to x v_{j,2} y$
we define
\[
  \wdtld{F}(q_{j,x,y}) :
    \wdtld{F}(x v_{j,1} y)
    \xrightarrow[]{\wdtld{F}(x) \gamma_j \wdtld{F}(y)}
    \wdtld{F}(x v_{j,2} y)
\]
Since a morphism $q$ in $\wdtld{\cG}$
is a sequence of simple $q_{j,x,y}$ and their reverses,
we take $\wdtld{F}(q)$ to be the composition
of the appropriate $\wdtld{F}(q_{j,x,y})$'s
(these are natural isomorphisms,
so if the sequence uses a reversed arrow,
we associate the inverse natural isomorphism).\\

It is easy to see that this gives a well-defined monoidal functor
$(\wdtld{F}, \wdtld{J}): \wdtld{\cG} \to_\tnsr \Aut(\cA)$.
Then we would get a monoidal functor $(F,J): \cG \to_\tnsr \Aut(\cA)$
if $(\wdtld{F}, \wdtld{J})$ factors through $\mathcal{Q}$:
\[
  \begin{tikzcd}
    \wdtld{\cG} \ar[d, "\mathcal{Q}"'] \ar[dr, "{(\wdtld{F},\wdtld{J})}"]\\
    \cG \ar[r, "{(F,J)}"']
      & \Aut(\cA)
  \end{tikzcd}
\]\\

In concrete terms, factoring through $\mathcal{Q}$
means that for each pair of unreduced words $w_1,w_2$,
any sequence of applications of relations
to get from $w_1$ to $w_2$
(i.e. a morphism in $\wdtld{\cG}$ from $w_1$ to $w_2$),
will result in the same natural isomorphism
$\wdtld{F}(w_1) \to \wdtld{F}(w_2)$.
In other words,
it is equivalent to the statement that $U_{g_i},U_{g_i^\inv},\gamma_j$
satisfy the cocycle condition of \defref{def:grp_action_genrel}.
In summary,

\begin{proposition}
\label{prp:grp_action_genrel_functor}
Given a monoidal functor
$(F,J): \cG \to_\tnsr \Aut(\cA)$,
the values of $F$ on generators and relations,
interpreted appropriately with $J$,
defines a group action in the sense of
\defref{def:grp_action_genrel}.\\

Conversely,
from a group action in the sense of
\defref{def:grp_action_genrel},
one can construct a monoidal functor
$(\wdtld{F},\wdtld{J}): \wdtld{\cG} \to_\tnsr \Aut(\cA)$
that factors through $\mathcal{Q}$,
and hence defines a monoidal functor
$(F,J): \cG \to_\tnsr \Aut(\cA)$.\\

Furthermore, beginning with some
$(F,J):\cG \to_\tnsr \Aut(\cA)$,
applying the first construction
and then the second,
we get a new monoidal functor
$(F',J'):\cG \to_\tnsr \Aut(\cA)$,
and $(F,J),(F',J')$
are naturally isomorphic as monoidal functors.
\end{proposition}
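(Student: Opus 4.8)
The plan is to leverage the equivalence of monoidal categories $\pi: \cG \xrightarrow{\sim} \Cat(G)$ established just above, so that ``a $G$-action on $\cA$'' is literally a monoidal functor out of $\cG$, and to treat $\wdtld{\cG}$ as the device that converts generators-and-relations data into such a functor. All three assertions then become bookkeeping around the quotient $\mathcal{Q}: \wdtld{\cG} \to \cG$. I would prove the forward construction, the backward construction, and the roundtrip comparison in turn.

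For the forward direction, given $(F,J): \cG \to_\tnsr \Aut(\cA)$ I set $U_{g_i} := F(g_i)$ and $U_{g_i^\inv} := F(g_i^\inv)$, and for each relation $r_j: v_{j,1}=v_{j,2}$ I define $\gamma_j$ as the composite $U_{v_{j,1}} \xrightarrow{J} F(v_{j,1}) \xrightarrow{F(r_j)} F(v_{j,2}) \xrightarrow{J^\inv} U_{v_{j,2}}$, where $r_j$ denotes the unique morphism $v_{j,1}\to v_{j,2}$ in $\cG$ and $J$ is the iterated structure isomorphism. The cocycle condition of \defref{def:grp_action_genrel} is then automatic: a path $w_1 \to \cdots \to w_2$ of relation-moves composes to $U_{w_1}\xrightarrow{J} F(w_1)\xrightarrow{F(r)} F(w_2)\xrightarrow{J^\inv} U_{w_2}$ with $r$ the unique morphism of $\Hom_\cG(w_1,w_2)$, and this is manifestly independent of the chosen path. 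This is the easy half.

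For the backward direction I would build $(\wdtld{F},\wdtld{J}): \wdtld{\cG} \to_\tnsr \Aut(\cA)$ directly from a pre-action $U_{g_i},U_{g_i^\inv},\gamma_j$: on objects $\wdtld{F}(a_1\cdots a_k)=U_{a_1}\cdots U_{a_k}$, with $\wdtld{J}=\id$, and on a simple morphism $q_{j,x,y}$ the whiskered natural isomorphism $\wdtld{F}(x)\,\gamma_j\,\wdtld{F}(y)$, extended to arbitrary morphisms by composition (inverting on reversed arrows). The two things to check are that this is a well-defined functor --- which holds because morphisms of $\wdtld{\cG}$ are reduced paths and inverting on reverses respects cancellation --- and that $\wdtld{J}=\id$ satisfies the monoidal axioms, which reduces to the Godement interchange law for the whiskered $\gamma_j$'s together with strictness of concatenation. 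I would then observe that $\wdtld{F}$ factors through $\mathcal{Q}$ exactly when any two parallel morphisms of $\wdtld{\cG}$ receive the same value, which is word-for-word the cocycle condition; the induced functor $(F,J)$ on $\cG$ has $J=\id$.

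The roundtrip comparison is where the real content sits. Starting from $(F,J)$, the first then second construction yields $(F',J')$ with $F'(w)=F(a_1)\cdots F(a_k)$ and $J'=\id$, whereas the original has $F(w)=F(a_1\tnsr\cdots\tnsr a_k)$. I would define the comparison $\eta: F'\Rightarrow F$ on an object $w$ to be the iterated structure isomorphism $\eta_w=J^{(w)}: F(a_1)\cdots F(a_k)\to F(w)$. Naturality against the unique morphism $r:w_1\to w_2$ holds \emph{by construction}, since $\gamma_j$ --- and hence $F'(r)$ --- was defined precisely as $J^\inv\circ F(r)\circ J=\eta_{w_2}^\inv\circ F(r)\circ \eta_{w_1}$. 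That $\eta$ is monoidal (compatible with $J$ on one side and $J'=\id$ on the other) is the assertion that the iterated $J$ is associative, i.e. Mac Lane coherence, or equivalently the hexagon for $J$. The main obstacle is thus not any single hard step but the care needed to show the iterated $J$ is well-defined independently of bracketing and that the whiskering and interchange identities line up; once coherence is invoked this is routine, but it is the part most prone to order-of-composition errors, so I would organize it around the single coherence statement for $J$ rather than expanding composites by hand.
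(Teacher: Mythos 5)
Your proposal is correct and follows essentially the same route as the paper: the forward direction via $\gamma_j = J^\inv \circ F(r_j) \circ J$ with the cocycle condition falling out of the uniqueness of morphisms in $\cG$, the backward direction via $\wdtld{F}$ with $\wdtld{J}=\id$ and whiskered $\gamma_j$'s on simple morphisms (factoring through $\mathcal{Q}$ being word-for-word the cocycle condition), and the roundtrip comparison given by the iterated structure isomorphism $J$. The paper's own proof is terser — it dispatches the comparison with ``$\eta_w = J^\inv$ works'' — while you correctly identify that the hidden content there is coherence for the iterated $J$, which is a fair and accurate elaboration rather than a different argument.
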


\begin{proof}
It remains to prove the last part,
that $(F,J),(F',J')$ are naturally isomorphic as
monoidal functors.
It is easy to check that $\eta_w = J^\inv$ works,
where recall we abuse notation for $J$ to also mean
successive applications of $J$'s:
for $w = a_1\ldots a_k$,
$F'(w) = U_w = F(a_1)\ldots F(a_k) \to \ldots \to F(a_1\ldots a_k) = F(w)$.

\comment{
Indeed, the fact that $\eta$ is a natural isomorphism
of monoidal functors follows easily from the constructions:

\[
  \begin{tikzcd}
    U_{w_1} \circ U_{w_2} \ar[r, "J"] \ar[d, "\eta \circ \eta"]
    & U_{w_1w_2} \ar[d, "\eta"] \\
    U_{w_1}' \circ U_{w_2}' \ar[r, equal, "J'"]
    & U_{w_1w_2}'
  \end{tikzcd}
\]
}
\end{proof}

\begin{remark}
If we take the trivial presentation
$G = \eval{\bar{g} \textrm{ for } g \in G |
        r_{\bar{g},\bar{h}}: \bar{g}\bar{h} = \overline{gh}}$,
we find that we recover the usual notion of a group action of
$G$ on $\cA$, as first discussed at the beginning of this section.
\hfill $\triangle$
\end{remark}

Finally, let us relate this back to the usual notion
of group action on a category:

\begin{corollary}
\label{cor:grp_action_genrel_usual}
Given a $G$-action on $\cA$ by generators and relations
$U_{g_i},U_{g_i^\inv}, \gamma_j$,
as in \defref{def:grp_action_genrel},
we can obtain a group action in the usual sense
by first applying \prpref{prp:grp_action_genrel_functor}
to get a monoidal functor $(F,J): \cG \to_\tnsr \Aut(\cA)$,
then choosing an inverse to the canonical $\pi: \cG \to \Cat(G)$.\\

More concretely, one chooses, for each $g\in G$,
a word $w_g$, and sets $T_g = F(w_g) = U_{w_g}$,
and for $g,h\in G$,
set $\gamma_{g,h}' = F(r) : T_g T_h = F(w_g)F(w_h) \to F(w_{gh}) = T_{gh}$,
where $r$ is the unique morphism in $\Hom_\cG(w_g w_h, w_{gh})$.
The uniqueness of $r$ also guarantees that the usual
cocycle condition is satisfied by $T_g, \gamma_{g,h}'$.\\

Any two inverses to $\pi$ are naturally isomorphic
by unique natural isomorphism,
so the resulting group actions are equivalent.
\end{corollary}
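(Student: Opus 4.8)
The plan is to lean entirely on the fact, already established in the preceding paragraphs, that $\pi: \cG \to \Cat(G)$ is a monoidal equivalence, together with the observation that $\cG$ is \emph{thin}: between any two objects (words) there is at most one morphism, namely the unique arrow that exists precisely when the two words represent the same element of $G$. Nearly every coherence assertion in the corollary will reduce to this thinness, so there is no genuinely hard step.

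First I would produce the usual group action abstractly. Since $\pi$ is a monoidal equivalence, it admits a monoidal quasi-inverse $\pi^{-1}: \Cat(G) \to \cG$; composing with the monoidal functor $(F,J): \cG \to_\tnsr \Aut(\cA)$ supplied by \prpref{prp:grp_action_genrel_functor} yields a monoidal functor $\Cat(G) \to_\tnsr \Aut(\cA)$, which by the dictionary recalled at the start of the section is exactly a $G$-action on $\cA$ in the usual sense. This disposes of the abstract statement immediately.

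Next I would unwind $\pi^{-1}$ to extract the concrete formulas. On objects a quasi-inverse must send $g$ to some word $w_g$ with $\pi(w_g)=g$, and on morphisms there is nothing to choose since $\Cat(G)$ is discrete. The monoidal structure constraint of $\pi^{-1}$ at $(g,h)$ is then forced to be the unique morphism $r: w_g w_h \to w_{gh}$ in $\cG$. Transporting through $(F,J)$ gives $T_g = F(w_g) = U_{w_g}$ and $\gamma'_{g,h} = F(r)$ (composed with the relevant instance of $J$), as claimed. For the usual cocycle condition I would observe that it is simply the hexagon axiom for the composite monoidal functor, hence automatic; alternatively, spelling out the square on $T_g T_h T_k$, every edge is $F$ applied to a morphism of $\cG$ between fixed words, and since $\cG$ is thin the two composite morphisms $w_g w_h w_k \to w_{ghk}$ coincide already in $\cG$, so $F$ carries the square to a commuting one.

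Finally, for the equivalence of the resulting actions I would invoke uniqueness of quasi-inverses. Given two choices $\pi^{-1}_1, \pi^{-1}_2$, a natural isomorphism between them has components $w_g^{(1)} \to w_g^{(2)}$ in $\cG$ joining two words both representing $g$; thinness forces each component to be the unique such arrow, so the natural isomorphism exists and is unique, and its naturality and monoidality squares commute automatically for the same reason. Whiskering with $F$ produces a monoidal natural isomorphism $F\circ\pi^{-1}_1 \xrightarrow{\sim} F\circ\pi^{-1}_2$, i.e. an equivalence of the two induced $G$-actions. The only point demanding any care is tracking the constraints $J$ and the structure map of $\pi^{-1}$ when identifying $\gamma'_{g,h}$ concretely, but every coherence one might worry about is trivialized by the thinness of $\cG$, so I expect no real obstacle.
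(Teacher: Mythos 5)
Your proposal is correct and follows essentially the same route as the paper: compose $(F,J)$ with a (quasi-)inverse of the monoidal equivalence $\pi:\cG\to\Cat(G)$, read off $T_g=F(w_g)=U_{w_g}$ and $\gamma'_{g,h}=F(r)$, and let the thinness of $\cG$ (uniqueness of $r$) trivialize the cocycle condition and the comparison of different choices of inverse. Your extra caution about inserting ``the relevant instance of $J$'' is fine but unnecessary here, since in the construction of \prpref{prp:grp_action_genrel_functor} the monoidal structure $\wdtld{J}$ (hence $J$) is the identity, so $F(w_g)F(w_h)=F(w_gw_h)$ on the nose.
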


\comment{
Let us relate this back to the usual notion of
group actions on categories.
As mentioned before, one can get a usual group action
of $G$ on $\cA$ from a generators-and-relations description
by precomposing $(F,J): \cG \to_\tnsr \cA$
by some inverse of $\pi:\cG \to \Cat(G)$.
Concretely, such an inverse amounts to picking a representative
word $w_g$ for each $g\in G$,
so that $T_g = F(w_g)$.
Then we take
$\gamma_{g,h}' = F(r) : T_g T_h = F(w_g)F(w_h) \to F(w_{gh}) = T_{gh}$,
where $r$ is the unique morphism in $\Hom_\cG(w_g w_h, w_{gh})$.
The uniqueness of this morphism $r$ also guarantees
the usual cocycle condition for $T_g, \gamma_{g,h}'$.\\
}

\begin{remark}
\label{rmk:second_order_relations}
In general, it is not so clear how to check that
a pre-$G$-action in the sense of \defref{def:pre_grp_action_genrel}
satisfies the cocycle conditions of \defref{def:grp_action_genrel}
to be a $G$-action.
If we are to rely solely on algebra,
we need to understand the presentation $G=\eval{g_i|r_j}$
a little better,
somehow know the ``relations between relations",
not unlike the second syzygies of a module
as studied in homological algebra.\\

Let us clarify. Recall the CW-complex $\Xi$
defined above as justification for \defref{def:grp_action_genrel}.
Suppose we have checked that two paths $r,r'$ from $w_1$ to $w_2$
in $\Xi$ lead to the same natural isomorphism $U_{w_1} \to U_{w_2}$;
we call this a \emph{second-order relation}.
We attach a 2-cell along the loop $r^\inv r'$.
For any words $x,y$,
we automatically have that the two paths $x r y, x r' y$
from $x w_1 y$ to $x w_2 y$ in $\Xi$
also lead to the same natural isomorphism
$U_{x w_1 y} \to U_{x w_2 y}$,
so we also attach a 2-cell along the loop $(x r y)^\inv (x r' y)$.\\

Now suppose we have found several second-order relations,
so that upon attaching the corresponding 2-cell
and its ``translates" as above for each one,
each connected component of the new CW-complex $\Xi'$ is simply connected.
Then it is easy to see that this implies
that the pre-$G$-action we started with is actually a $G$-action.
Indeed, this means that any loop is contractible via
a sequence of 2-cells;
each time a loop homotopes through a 2-cell,
the second-order relation implies that the corresponding natural
isomorphism doesn't change.
Since the constant loop is associated the identity natural isomorphism,
we find that the original loop is also associated the identity,
hence the cocycle condition is satisfied.

\hfill $\triangle$
\end{remark}

\subsubsection{Equivariant Objects}

Suppose we are given a group $G$ acting on a category $\cA$
in the usual sense, as described at the beginning of the previous
\secref{sec:group_action}.
Then recall that a $G$-equivariant object of $\cA$
is an object $A \in \cA$ with isomorphisms
$\mu_g: T_g(A) \to A$,
and $\mu_-$ is required to be compatible with $T_-$
in the sense that
\[
  \begin{tikzcd}
    T_g(T_h(A)) \ar[r, "T_g(\mu_h)"] \ar[d, "\gamma_{g,h}"]
      & T_g \ar[d, "\mu_g"]\\
    T_{gh}(A) \ar[r, "\mu_{gh}"]
      & A
  \end{tikzcd}
\]
commutes for every pair of $g,h\in G$
(see for example \ocite{EGNO}*{Section 2.7}).\\

If $G$ and its action are presented by
generators and relations as in \defref{def:grp_action_genrel},
we would like to describe a $G$-equivariant object
by making use of the presentation.
In particular, if $G$ is finitely presented,
one would hope to be able to describe a
$G$-equivariant object by a finite amount of data
that satisfy a finite number of equations.
The goal of this subsection is to spell this out in detail.\\

We begin with a $G$-action on $\cA$ that is given by
the following data: for each generator $g_i$,
auto-equivalences $U_{g_i},U_{g_i^\inv}$,
and for each relation $r_j: v_{j,1}=v_{j,2}$,
we have $\gamma_j: U_{v_{j,1}} \to U_{v_{j,2}}$.\\

Suppose we are given an object $A\in \cA$,
and for each generator $g_i$ of $G$,
we are given an isomorphism
\[
  \mu_{g_i} : U_{g_i}(A) \to A
\]

From this, we can define
$\mu_{g_i^\inv} :
  U_{g_i^\inv}(A)
    \xrightarrow[]{U_{g_i^\inv}(\mu_{g_i}^\inv)}
      U_{g_i^\inv}(U_{g_i}(A))
    \xrightarrow[]{\gamma_{l_i}} A$
where $\gamma_{l_i}$ is the natural isomorphism
corresponding to the relation $g_i^\inv g_i = 1$.
We can then construct $\mu_w: U_w(A) \to A$
for a word $w = a_1\ldots a_k$ as the composition
\begin{align*}
  U_w(A) = U_{a_1}\ldots U_{a_k}(A)
    &\xrightarrow[]{U_{a_1}\ldots U_{a_{k-1}}(\mu_{a_k})}
      U_{a_1}\ldots U_{a_{k-1}}(A) \\
    & \hspace{10pt} \vdots \\
    & \xrightarrow[]{\mu_{a_1}} A
\end{align*}

Now just by construction, we have that,
for words $w_1,w_2$,
\[
  \begin{tikzcd}
    U_{w_1}(U_{w_2}(A)) \ar[r, "U_{w_1}(\mu_{w_2})"]
      \ar[d, equal]
    & U_{w_1}(A) \ar[d, "\mu_{w_1}"] \\
    U_{w_1w_2} \ar[r, "\mu_{w_1w_2}"]
    & A
  \end{tikzcd}
\]
so it seems that there is nothing to prove.\\

Thinking back to the usual definition of a $G$-equivariant
object as in the beginning of this section,
one should have exactly one $\mu_g: U_g(A) \to A$
for each $g$, so we should expect that if two words
$w_1,w_2$ are equal in $G$,
then $\mu_{w_1}$ should be equal to $\mu_{w_2}$.
This cannot literally be true, being morphisms from
different objects.
We may however impose that for each relation $r_j: v_{j,1} = v_{j,2}$,
\[
  R_j: \hspace{10pt}
  \begin{tikzcd}
    U_{v_{j,1}}(A) \ar[rr, "(\gamma_j)_A"] \ar[rd, "\mu_{v_{j,1}}"']
    & & U_{v_{j,2}}(A) \ar[ld, "\mu_{v_{j,2}}"] \\
    & A
  \end{tikzcd}
\]
This implies a similar commutative diagram
for $x r_j y$ relating $x v_{j,1} y$ to $x v_{j,2} y$:
\[
  \begin{tikzcd}
    U_{x v_{j,1} y}(A)
      \ar[r, equal]
      \ar[rrddd, "\mu_{x v_{j,1} y}"', bend right]
    & U_x U_{v_{j,1}} U_y(A)
      \ar[rr, "(U_x \gamma_j U_y)_A"]
      \ar[d, "U_x U_{v_{j,1}} (\mu_y)"']
    & & U_x U_{v_{j,2}} U_y(A)
      \ar[r, equal]
      \ar[d, "U_x U_{v_{j,2}} (\mu_y)"]
    & U_{x v_{j,2} y}(A)
      \ar[llddd, "\mu_{x v_{j,2} y}", bend left]
    \\
    & U_x U_{v_{j,1}}(A)
      \ar[rr, "(U_x \gamma_j)_A"]
      \ar[rd, "U_x (\mu_{v_{j,1}})"']
    & & U_x U_{v_{j,2}}(A)
      \ar[ld, "U_x (\mu_{v_{j,2}})"]
    \\
    & & U_x(A)
      \ar[d, "\mu_x"]
    \\
    & & A
  \end{tikzcd}
\]

\begin{definition}
\label{def:eqvrt_genrel}
Let an action of $G = \eval{g_i|r_j}$ on a category $\cA$
be given by generators and relations in the sense of
\defref{def:grp_action_genrel}.
Then a generators-and-relations presentation of a $G$-equivariant
object structure on $A$
consists of an isomorphism
$\mu_{g_i}: U_{g_i}(A) \to A$
for each $g_i$,
subject to the equation $R_j$ above
for each relation $r_j$.
\hfill $\triangle$\\
\end{definition}

Note that, in contrast to checking the cocycle condition
of \defref{def:grp_action_genrel},
there is no need to use ``second order relations"
(as discussed at the end of the previous section).\\

Finally,
let us relate this to the usual notion of group action on categories
and equivariant objects.
Recall that in \corref{cor:grp_action_genrel_usual},
we see that to go from a generators-and-relations
description of a group action to the usual one,
we make a choice of word $w_g$ for each $g\in G$,
and set $T_g = U_{w_g}$,
and $\gamma_{g,h}': T_g T_h \to T_{gh}$ is determined from $\gamma_j$'s.
Then we can obtain a usual $G$-equivariant structure on $A$
from a generators-and-relations description as in \defref{def:eqvrt_genrel}
as follows:
Take $\nu_g = \mu_{w_g}: T_g(A) = U_{w_g}(A) \to A$.
The compatibility of $\nu_-$ with the group action $T_-,\gamma'$
(i.e. whether $\nu_-$ defines a $G$-equivariant structure on $A$)
is equivalent to the commutativity of the diagram
\[
  \begin{tikzcd}
    U_{w_g}(U_{w_h}(A))
      \ar[r, "U_{w_g}(\mu_{w_h})"]
      \ar[d, "\gamma"]
    & U_{w_g}(A)
      \ar[d, "\mu_{w_h}"] \\
    U_{w_{gh}}(A) \ar[r, "\mu_{w_{gh}}"]
    & A
  \end{tikzcd}
\]
where $\gamma$ is the composition of some $\gamma_j$'s
corresponding to a path from $w_g w_h$ to $w_{gh}$.
We see that this diagram is commutative
precisely because we have imposed the equations $R_j$
on $\mu_j$.

\begin{bibdiv}
\begin{biblist}


\bib{AP}{article}{
  author={Andersen, Henning Haahr},
  author={Paradowski, Jan},
  title={Fusion categories arising from semisimple Lie algebras},
  journal={Communications in Mathematical Physics},
  volume={169},
  date={1995},
  pages={563-588}
}

\bib{BakK}{book}{
   author={Bakalov, Bojko},
   author={Kirillov, Alexander, Jr.},
   title={Lectures on tensor categories and modular functors},
   series={University Lecture Series},
   volume={21},
   publisher={American Mathematical Society},
   place={Providence, RI},
   date={2001},
   pages={x+221},
   isbn={0-8218-2686-7},
   review={\MR{1797619 (2002d:18003)}},
}

\bib{BalK}{article}{
  author={Balsam, Benjamin},
  author={Kirillov Jr., Alexander},
  title={Turaev-Viro Invariants as an Extended TQFT},
  year={2010},
  pages={arXiv:1004.1533}
}

\bib{BBJ1}{article}{
  author={Ben-Zvi, David},
  author={Brochier, Adrien},
  author={Jordan, David},
  title={Integrating Quantum Groups over Surfaces},
  year={2015},
  pages={arXiv:1501.04652}
}

\bib{BBJ2}{article}{
  author={Ben-Zvi, David},
  author={Brochier, Adrien},
  author={Jordan, David},
  title={Quantum Character Varieties and Braided Module Categories},
  year={2016},
  pages={arXiv:1606.04769}
}

\bib{BJ}{article}{
  author={Brochier, Adrien},
  author={Jordan, David},
  title={Fourier Transform for Quantum \emph{D}-modules
          via the Punctured Torus Mapping Class Group}
  year={2014},
  pages={arXiv:1403.1841}
}

\bib{CY}{article}{
  author={Crane, Louis},
  author={Yetter, David},
  title={A Categorical Construction of 4D Topological Quantum Field Theories},
  journal={Quantum Topology},
  year={1993},
  pages={120-130},
  publisher={World Scientific, Singarpore}
}

\bib{CKY}{article}{
  author={Crane, Louis},
  author={Kauffman, Louis},
  author={Yetter, David},
  title={State-sum Invariants of 4-Manifolds, I}
  journal={Jounral of Knot Theory and Its Ramifications},
  volume={06},
  year={1997},
  pages={177-234}
}

\bib{CKY_evaluate}{article}{
  author={Crane, Louis},
  author={Kauffman, Louis},
  author={Yetter, David},
  title={Evaluating the Crane-Yetter Invariant}
  journal={Quantum Topology},
  year={1993},
  pages={120-130},
  publisher={World Scientific, Singarpore}
}

\bib{D}{article}{
  author={Donaldson, Simon K.},
  title={An Application of Gauge Theory to Four Dimensional Topology},
  journal={Journal of Differential Geometry},
  volume={18},
  issn={269-316},
  date={1983}
}

\bib{De1}{article}{
  author={Deligne, Pierre},
  title={Cat\'egories tannakiennes},
  journal={The Grothendieck Festschrift, Vol II},
  publisher={Progr. Math.},
  volume={87},
  date={1990},
  pages={111-195}
}

\bib{De2}{article}{
  author={Deligne, Pierre},
  title={Cat\'egories tensorielles},
  journal={Moscow Math Journal}
  date={2002}
}

\bib{Dr}{article}{
  author={Drinfeld, Vladimir},
  title={Quantum Groups},
  journal={Journal of Soviet Mathematics},
  series={Zap. Nauchn. Sem. LOMI},
  volume={155},
  year={1986}
}

\bib{ENO1}{article}{
  author={Etingof, Pavel},
  author={Nikshych, Dmitri},
  author={Ostrik, Victor},
  title={An analog of Radford's $S^4$ formula for finite tensor categories},
  journal={Int. Math. Res. Not.},
  volume={54},
  date={2004},
  issn={2915-2933}
}

\bib{EGNO}{book}{
  author={Etingof, Pavel},
  author={Gelaki, Shlomo},
  author={Nikshych, Dmitri},
  author={Ostrik, Victor},
  title={Tensor Categories},
  year={2015},
  publisher={American Mathematical Society},
  series={Mathematical Surveys and Monographs},
  edition={1}
}

\comment{
\bib{FarbMarg}{book}{
  author={Farb, Benson},
  author={Margalit, Dan},
  title={A Primer on Mapping Class Groups},
  year={2012},
  publisher={Princeton University Press},
  series={Princeton Mathematical Series},
  edition={1}
}
} 

\bib{JS}{article}{
  author={Joyal, Andr\'e},
  author={Street, Ross},
  title={Tortile Yang-Baxter operators in tensor categories},
  journal={Jounral of Pure and Applied Algebra},
  volume={71},
  issue={1},
  date={1991},
  pages={43-51}
}

\bib{Kas}{book}{
  author={Kassels, Christian},
  title={Quantum Groups},
  edition={1},
  series={Graduate Texts in Mathematics},
  publisher={Springer-Verlag}
}

\bib{K}{article}{
  author={Kirillov Jr., Alexander},
  title={String-net Model of Turaev-Viro Invariants}
  date={2011},
  pages={arXiv:1106.6033}
}

\bib{Maj}{article}{
  author={Majid, Shahn},
  title={Representations, duals and quantum doubles of monoidal categories},
  journal={Proceedings of the Winter School "Geometry and Physics"},
  publisher={Circolo Matematico di Palermo(Palermo)},
  pages={197-206}
}

\bib{muger2}{article}{
   author={M{\"u}ger, Michael},
   title={From subfactors to categories and topology. II. The quantum double
   of tensor categories and subfactors},
   journal={J. Pure Appl. Algebra},
   volume={180},
   date={2003},
   number={1-2},
   pages={159--219},
   issn={0022-4049},
   review={\MR{1966525 (2004f:18014)}},
   doi={10.1016/S0022-4049(02)00248-7},
}

\bib{Oo}{article}{
  author={Ooguri, Hirosi},
  title={Topological Lattice Models in Four Dimensions}
  journal={Modern Physics Letters A},
  volume={7},
  date={1992},
  pages={2799-2810},
}

\bib{RT}{article}{
  author={Reshetikhin, Nicolai},
  author={Turaev, Vladimir},
  title={Ribbon Graphs and Their Invariants Derived from Quantum Groups}
  journal={Comm. Math. Phys},
  volume={127},
  date={1990},
  pages={1-26}
}

\bib{Ro}{article}{
  author={Roberts, Justin},
  title={Refined State-Sum Invariants of 3- and 4-Manifolds}
  journal={Geometric Topology: 1993 Georgia International Topology Conference},
  publisher={American Mathematical Society},
  date={1997}
}

\bib{W}{article}{
  author={Witten, Edward},
  title={Quantum Field Theory and the Jones Polynomial},
  journal={Comm. Math. Phys},
  volume={121},
  date={1989},
  number={3},
  pages={351--399}
}

\bib{Y}{article}{
  author={Yetter, David},
  title={Homologically Twisted Invariants Related to
    (2+1)- and (3+1)-Dimensional State-Sum
    Topological Field Theories}
  year={1993}
  pages={arXiv:hep-th/9311082}
}

\comment{
\bib{MacKaay}{article}{
  author={Marco Mackaay},
  title={Spherical 2-categories and 4-manifold invariants},
}
}

\end{biblist}
\end{bibdiv}

\end{document}